\newtheorem{Theorem}[equation]{Theorem}
\newtheorem{Lemma}[equation]{Lemma}
\theoremstyle{definition}
\newtheorem{Definition}[equation]{Definition}
\newtheorem{Conjecture}[equation]{Conjecture}
\theoremstyle{remark}
\newtheorem{Remark}[equation]{Remark}
\numberwithin{equation}{section}
\DeclareMathOperator{\ev}{ev}
\DeclareMathOperator{\id}{id}
\DeclareMathOperator{\ad}{ad}
\DeclareMathOperator{\row}{row}
\DeclareMathOperator{\col}{col}
\newcommand{\ve}{\varepsilon}
\newcommand{\tss}{\hspace{1pt}}
\begin{document}
\title{Affine Yangians and some cosets of non-rectangular $W$-algebras}
\author{Mamoru Ueda\thanks{mueda@ualberta.ca}}
\affil{Department of Mathematical and Statistical Sciences, University of Alberta, 11324 89 Ave NW, Edmonton, AB T6G 2J5, Canada}
\date{}
\maketitle
\begin{abstract}
We construct a homomorphism from the affine Yangian associated with $\widehat{\mathfrak{sl}}(q_u-q_{u+1})$ to the universal enveloping algebra of a $W$-algebra associated with $\mathfrak{gl}(\sum_{s=1}^l\limits q_s)$ and a nilpotent element of type $(1^{q_1-q_2},2^{q_2-q_3},\dots,(l-1)^{q_{l-1}-q_l},l^{q_l})$
by using the coproduct and evaluation map for the affine Yangian and the Miura map for $W$-algebras. As an application, we give a homomorphism from the affine Yangian to some cosets of non-rectangular $W$-algebras.
\end{abstract}
\section{Introduction}
The AGT conjecture suggests the existence of a representation of the principal $W$-algebra of type $A$ on the equivariant homology space of the moduli space of $U(r)$-instantons. Schiffmann and Vasserot \cite{SV} gave this representation by using an action of the Yangian associated with $\widehat{\mathfrak{gl}}(1)$ on this equivariant homology space. 
More generally, Crutzig-Diaconescu-Ma \cite{CE} conjectured that an action of an iterated $W$-algebra of type $A$ on the equivariant homology space of the affine Laumon space will be given through an action of an shifted affine Yangian constructed in \cite {FT}.

In order to resolve the Crutzig-Diaconescu-Ma's conjecture, we need to construct a surjective homomorphism from the shifted affine Yangian to the universal enveloping algebra of the iterated $W$-algebra of type $A$. The iterated $W$-algebra of type $A$ is expected to be the tensor of a $W$-algebra of type $A$ and $\beta\gamma$-systems. 
Thus, the Crutzig-Diaconescu-Ma's conjecture can be considered as the affine analogue of Brundan-Kleshchev \cite{BK}, which gives a surjective homomorphism from the shifted Yangian to a finite $W$-algebra of type $A$. However, the shifted affine Yangian is so complicated that it is difficult to construct the homomorphism of the Crutzig-Diaconescu-Ma's conjecture directly. In finite setting, there exist homomorphisms from finite Yangians to finite $W$-algebras of type $A$, which are restirictions of the one in \cite{BK}. One of these homomorphisms was given by De Sole-Kac-Valeri \cite{DKV} by using the Lax operator. In \cite{U7} and \cite{U11}, the author constructed the affine analogue of the homomorphism in De Sole-Kac-Valeri \cite{DKV}. Let us take a positive integer $N$ and its partition:
\begin{gather*}
N=q_1+q_2+\cdots+q_l,\qquad q_1\geq q_{2}\geq \cdots\geq q_l
\end{gather*}
and set $f$ as a nilpotent element of type $(1^{q_1-q_2},2^{q_2-q_3},\dots,(l-1)^{q_{l-1}-q_l},l^{q_l})$.
In \cite{U11}, we constructed a homomorphism from the affine Yangian associated with $\widehat{\mathfrak{sl}}(q_l)$ to the universal enveloping algebra of a $W$-algebra $\mathcal{W}^k(\mathfrak{gl}(N),f)$, which is a $W$-algebra associated with $\mathfrak{gl}(N)$ and $f$ by using the the coproduct and evaluation map for the affine Yangian and the Miura map for non-rectangular $W$-algebras. Moreover, in \cite{U11}, we show that the coproduct for the affine Yangian is compatible with the parabolic induction for a non-rectangular $W$-algebra via this homomorphism.

In \cite{U9}, we gave the generalization of \cite{U7}.
We constructed a homomorphism from the affine Yangian associated with $\widehat{\mathfrak{sl}}(q_u-q_{u+1})$ to the universal enveloping algebra of $\mathcal{W}^k(\mathfrak{gl}(N),f)$. In this article, we gave another proof to \cite{U9}. In \cite{U8} and \cite{U10}, we gave a homomorphism
\begin{align*}
\Psi_1\otimes\Psi_2\colon Y_{\hbar,\ve}(\widehat{\mathfrak{sl}}(n))\otimes Y_{\hbar,\ve+n\hbar}(\widehat{\mathfrak{sl}}(m))\to Y_{\hbar,\ve}(\widehat{\mathfrak{sl}}(m+n)).
\end{align*}
Using $\Psi_1\otimes\Psi_2$, we obtain a homomorphism from the tensor product $\bigotimes_{1\leq i\leq l}Y_{\hbar,\ve+\sum_{u=1}^{i-1}n_u\hbar}(\widehat{\mathfrak{sl}}(n_i))$ to $Y_{\hbar,\ve}(\widehat{\mathfrak{sl}}(\sum_in_i))$.
Its image is the affine version of the Levi sualbgera of the finite Yangian of type $A$ given in \cite{BK0}.
Let us take positive integers $\{n_a\}_{a=1}^{s+1}$ and $\{b_a\}_{a=1}^s$by
\begin{equation*}
n_a>n_{a+1},\ n_{a-1}=q_{b_{a-1}}>q_{b_{a-1}+1}=\cdots=q_{b_{a}}=n_{a}>q_{b_{a}+1}=n_{a+1},\ n_{s+1}=0.
\end{equation*}
By using the coproduct for the affine Yangian and $\Psi_1\otimes\Psi_2$, we can construct a homomorphism
\begin{equation*}
\Delta_l\colon\bigotimes_{1\leq a\leq s}Y_{\hbar,\ve-(n_a-q_l)\hbar}(\widehat{\mathfrak{sl}}(n_a-n_{a+1}))\to\widehat{\bigotimes}_{1\leq v\leq l}Y_{\hbar,\ve-(q_v-q_l)\hbar}(\widehat{\mathfrak{sl}}(q_v)),
\end{equation*}
where $\widehat{\bigotimes}_{1\leq v\leq l} Y_{\hbar,\ve-(q_v-q_l)\hbar}(\widehat{\mathfrak{sl}}(q_v))$ is the standard degreewise completion of the tensor of affine Yangians.
The Miura map (\cite{KW1}) induces a homomorphism
\begin{equation*}
\widetilde{\mu}\colon \mathcal{U}(\mathcal{W}^k(\mathfrak{gl}(N),f))\hookrightarrow\widehat{\bigotimes}_{1\leq i\leq l}U(\widehat{\mathfrak{gl}}(q_i)),
\end{equation*}
where $\widehat{\bigotimes}_{1\leq i\leq l}U(\widehat{\mathfrak{gl}}(q_i))$ is the standard degreewise completion of $\bigotimes_{1\leq i\leq l}U(\widehat{\mathfrak{gl}}(q_i))$. We can connect $\Delta_l$ to the Miura map $\widetilde{\mu}$ by using the evalution map for the Yangian:
\begin{equation*}
\ev^n\colon Y_{\hbar,\ve-(n_a-q_l)\hbar}(\widehat{\mathfrak{sl}}(n))\to \mathcal{U}(\widehat{\mathfrak{gl}}(n)),
\end{equation*}
where $\mathcal{U}(\widehat{\mathfrak{gl}}(n))$ is the standard degreewise completion of the universal enveloping algebra of $\widehat{\mathfrak{gl}}(n)$. 
\begin{Theorem}\label{A}
There exists a homomorphism
\begin{equation*}
\Phi\colon \bigotimes_{1\leq a\leq s}Y_{\hbar,\ve-(n_a-q_l)\hbar}(\widehat{\mathfrak{sl}}(n_a-n_{a+1}))\to\mathcal{U}(\mathcal{W}^k(\mathfrak{gl}(N),f))
\end{equation*}
satisfying the relation
\begin{equation*}
\widetilde{\mu}\circ\Phi=\bigotimes_{1\leq i\leq l}\ev^{q_i}\circ\Delta_l.
\end{equation*}
\end{Theorem}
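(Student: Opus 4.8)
The plan is to use the injectivity of the Miura map to reduce the statement to a single image-containment assertion. Put $\rho:=\bigl(\bigotimes_{1\leq i\leq l}\ev^{q_i}\bigr)\circ\Delta_l$, which, as a composite of algebra homomorphisms, is a homomorphism from $\bigotimes_{1\leq a\leq s}Y_{\hbar,\ve-(n_a-q_l)\hbar}(\widehat{\mathfrak{sl}}(n_a-n_{a+1}))$ to $\widehat{\bigotimes}_{1\leq i\leq l}U(\widehat{\mathfrak{gl}}(q_i))$. Since $\widetilde{\mu}$ is injective it is an isomorphism onto its image, so once $\Ima\rho\subseteq\Ima\widetilde{\mu}$ is known, the map $\Phi:=\widetilde{\mu}^{-1}\circ\rho$ is a well-defined algebra homomorphism with $\widetilde{\mu}\circ\Phi=\rho$, and by injectivity of $\widetilde{\mu}$ it is the unique such map. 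Since the source is generated as an algebra by the generators $x^{\pm}_{j,0},x^{+}_{j,1},h_{j,0},h_{j,1}$ of its affine Yangian factors, it suffices to check $\rho(g)\in\Ima\widetilde{\mu}$ for each of these.

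To carry this out I would first make $\rho(g)$ explicit. Recall that $\Delta_l$ is built from the coproduct of the affine Yangian and the maps $\Psi_1\otimes\Psi_2$: the $a$-th factor $Y_{\hbar,\ve-(n_a-q_l)\hbar}(\widehat{\mathfrak{sl}}(n_a-n_{a+1}))$ is carried by the iterated coproduct into $\widehat{\bigotimes}_{v: q_v\geq n_a}Y_{\hbar,\ve-(q_v-q_l)\hbar}(\widehat{\mathfrak{sl}}(q_v))$, and inside each such factor it lands in the image of the affine Levi embedding produced by $\Psi_1\otimes\Psi_2$. Composing with $\ev^{q_v}$ and using the explicit form of $\ev^{q_v}$ on Drinfeld generators, together with the explicit form of $\ev^{q_v}$ applied to that affine Levi embedding, I obtain a description of $\rho(g)$ in $\widehat{\bigotimes}_{1\leq i\leq l}U(\widehat{\mathfrak{gl}}(q_i))$: for a degree-$0$ generator ($x^{\pm}_{j,0}$ or $h_{j,0}$) it is a sum of matrix-unit currents of the $\widehat{\mathfrak{gl}}(q_v)$ over the blocks $v$ with $q_v\geq n_a$; for a degree-$1$ generator ($x^{+}_{j,1}$ or $h_{j,1}$) it is that sum together with explicit quadratic cross-block terms and terms carrying the spectral shifts $\ve-(n_a-q_l)\hbar$ and $\ve-(q_v-q_l)\hbar$.

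It then remains to recognize each $\rho(g)$ as $\widetilde{\mu}$ of an element of $\mathcal{W}^k(\mathfrak{gl}(N),f)$: the degree-$0$ part as the Miura image of the affine currents of the copy of $\widehat{\mathfrak{sl}}(n_a-n_{a+1})$ inside $\mathcal{W}^k(\mathfrak{gl}(N),f)$, and the degree-$1$ part as the Miura image of the next generators of the $W$-algebra, for which I would use the explicit formulas for $\widetilde{\mu}$ from \cite{KW1} in the normalization of \cite{U11}. To keep the bookkeeping finite I would run this as an induction on $l$: at each step one separates off the factor $Y_{\hbar,\ve}(\widehat{\mathfrak{sl}}(q_l))$, for which the containment is exactly the result of \cite{U11}, from the remaining factors, which are handled by the inductive hypothesis applied to the partition $(q_1-q_l,\dots,q_{l-1}-q_l)$ — this is why the source shifts have the form $\ve-(n_a-q_l)\hbar$ — and recombines the two pieces via the coproduct, $\Psi_1\otimes\Psi_2$, and the compatibility of the coproduct with parabolic induction for non-rectangular $W$-algebras from \cite{U11}. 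The base case $l=1$ is $f=0$, where $\widetilde{\mu}=\id$, $\Delta_l=\id$, and $\Phi=\ev^{q_1}$.

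The main obstacle will be the degree-$1$ computation behind the last two steps: one must keep the infinitely many terms of the affine coproduct under control — which is exactly why the completed tensor products appear — track every spectral shift, and verify that the quadratic cross-block corrections produced by $\ev^{q_v}$ composed with the affine Levi embedding reproduce the quadratic part of the Miura image of the relevant $W$-algebra generators \emph{exactly}, not merely up to lower-order terms. This is the one place where a genuine calculation, rather than formal manipulation of homomorphisms, is unavoidable, and it requires fixing the normalizations of the evaluation map, of $\Psi_1\otimes\Psi_2$, and of the generators of $\mathcal{W}^k(\mathfrak{gl}(N),f)$ in a mutually consistent way.
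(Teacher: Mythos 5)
Your proposal is correct and follows essentially the same route as the paper: $\Phi$ is defined so that $\widetilde{\mu}\circ\Phi=\bigotimes_{1\leq i\leq l}\ev^{q_i}\circ\Delta_l$, the homomorphism property is inherited from the right-hand side via the injectivity of the Miura map, and everything reduces to recognizing, generator by generator, the element $\bigl(\bigotimes_{1\leq i\leq l}\ev^{q_i}\circ\Delta_l\bigr)(g)$ as $\widetilde{\mu}$ of an explicit element of $\mathcal{U}(\mathcal{W}^k(\mathfrak{gl}(N),f))$ --- which the paper does by writing $\Phi(X^{\pm}_{i,0})$ and $\Phi(H_{i,1})$ in terms of the elements $W^{(1)}_{p,q}$, $W^{(2)}_{p,q}$ of Theorem~\ref{Generators} and matching terms directly. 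The only divergence is organizational: where you propose an induction on $l$ through the parabolic-induction compatibility of \cite{U11}, the paper performs the degree-one verification in a single pass for general $l$, and the ``unavoidable calculation'' you flag (matching the quadratic cross-block terms against $\widetilde{\mu}(W^{(2)}_{p,q})$) is precisely the content of that verification.
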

The homomorphism $\Phi$ is corresponding to the one from the tensor product of finite Yangians of type $A$, the Levi subalgebra of the finite Yangian, to a finite $W$-algebra of type $A$, which is the restriction of the homomorphism in \cite{BK}. Thus, it is expected that we will obtain a homomorphism of Crutzig-Diaconescu-Ma's conjecture by extending $\Phi$ to the shifted affine Yangian. Also, there exists another version of this conjecture which notes the existence of a surjective homomorphism from the shifted affine Yangian to the universal enveloping algebra of a $W$-algebra of type $A$ if we change the definition of the shifted affine Yangian properly. Since $\bigotimes_{1\leq a\leq s}Y_{\hbar,\ve-(n_a-q_l)\hbar}(\widehat{\mathfrak{sl}}(n_a-n_{a+1}))$ is corresponding to the Levi subalgebra of the finite Yangian, we expect that Theorem~\ref{A} will lead the new presentation of the affine Yangian corresponding to the parabolic presentation and be helpful for the resolution of the another version of the Crutzig-Diaconescu-Ma's conjecture.

The homomorphism $\Phi$ induces a homomorphism
\begin{equation*}
\Phi_a\colon Y_{\hbar,\ve-(n_a-q_l)\hbar}(\widehat{\mathfrak{sl}}(n_a-n_{a+1}))\to\mathcal{U}(\mathcal{W}^k(\mathfrak{gl}(N),f))
\end{equation*}
for $1\leq a\leq s$, which are homomorphisms given in \cite{U7} and \cite{U9}. 
By the construction way of $\Delta_l$, we obtain the following theorem:
\begin{Theorem}\label{B}
The images of $\Phi_a$ are commutative with each other. 
\end{Theorem}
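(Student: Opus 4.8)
The plan is to deduce the commutativity of the images of the $\Phi_a$ from the way $\Delta_l$ is assembled out of the coproduct for the affine Yangian and the maps $\Psi_1\otimes\Psi_2$, together with the intertwining relation $\widetilde{\mu}\circ\Phi=\bigotimes_{1\leq i\leq l}\ev^{q_i}\circ\Delta_l$ of Theorem~\ref{A}. Since the Miura map $\widetilde{\mu}$ is injective, it suffices to show that the images of $\widetilde{\mu}\circ\Phi_a=\bigotimes_i\ev^{q_i}\circ\Delta_l|_{Y_a}$ for distinct indices $a$ commute inside $\widehat{\bigotimes}_{1\leq i\leq l}U(\widehat{\mathfrak{gl}}(q_i))$, where $Y_a:=Y_{\hbar,\ve-(n_a-q_l)\hbar}(\widehat{\mathfrak{sl}}(n_a-n_{a+1}))$ is the $a$-th tensor factor. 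Thus the whole statement is reduced to a computation purely on the target of the Miura map.

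The key steps, in order, would be as follows. First I would unwind the definition of $\Delta_l$: by construction it is the composite of the iterated coproduct $\bigotimes_{1\leq a\leq s}Y_a\to\widehat{\bigotimes}_{1\leq a\leq s}\bigl(Y_a^{\otimes(\text{number of }v\text{ with }b_{a-1}<v\leq b_a)}\bigr)$ applied factorwise, followed by the homomorphisms built from $\Psi_1\otimes\Psi_2$ that land each block $\bigotimes_{b_{a-1}<v\leq b_a}Y_{\hbar,\ve-(q_v-q_l)\hbar}(\widehat{\mathfrak{sl}}(q_v))$ inside the corresponding slots of $\widehat{\bigotimes}_{1\leq v\leq l}Y_{\hbar,\ve-(q_v-q_l)\hbar}(\widehat{\mathfrak{sl}}(q_v))$. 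Since the integers $\{b_a\}$ partition $\{1,\dots,l\}$ into consecutive blocks, the image of $\Delta_l|_{Y_a}$ after applying $\bigotimes_i\ev^{q_i}$ is supported only on the tensor slots $U(\widehat{\mathfrak{gl}}(q_v))$ with $b_{a-1}<v\leq b_a$. Hence for $a\neq a'$ the images of $\Phi_a$ and $\Phi_{a'}$ land in commuting sets of tensor slots of $\widehat{\bigotimes}_{1\leq i\leq l}U(\widehat{\mathfrak{gl}}(q_i))$, and therefore commute there; pulling back along the injection $\widetilde{\mu}$ gives commutativity of the images of $\Phi_a$ and $\Phi_{a'}$ in $\mathcal{U}(\mathcal{W}^k(\mathfrak{gl}(N),f))$. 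The remaining point is to verify that $\Phi_a$ really does coincide with the homomorphism of \cite{U7}, \cite{U9}, i.e.\ that restricting $\Phi$ to one tensor factor (with the others sent to the unit) reproduces the earlier construction; this follows because $\Delta_l$ restricted to a single factor $Y_a$ reduces to the coproduct/$\Psi_1\otimes\Psi_2$ recipe used there, and the evaluation and Miura maps are the same.

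The main obstacle I anticipate is a bookkeeping one rather than a conceptual one: keeping track of the shift parameters $\ve-(n_a-q_l)\hbar$ versus $\ve-(q_v-q_l)\hbar$ and of exactly which tensor slots each block occupies, so that one can genuinely assert ``disjoint support hence commuting.'' One must check that the homomorphisms built from $\Psi_1\otimes\Psi_2$ do not spread the image of a single block $Y_a$ across the slot boundary $v=b_a$ — this is where the precise form of $\Psi_1\otimes\Psi_2$, landing $Y_{\hbar,\ve}(\widehat{\mathfrak{sl}}(n))\otimes Y_{\hbar,\ve+n\hbar}(\widehat{\mathfrak{sl}}(m))$ inside $Y_{\hbar,\ve}(\widehat{\mathfrak{sl}}(m+n))$, is used, together with the fact that the degreewise completions are compatible with the tensor decomposition so that ``commuting slots'' really does imply commuting elements. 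Once that is pinned down, the theorem is immediate; no serious estimate or representation-theoretic input beyond Theorem~\ref{A} and the injectivity of $\widetilde{\mu}$ is needed.
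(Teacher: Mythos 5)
Your overall strategy (push everything through the injective Miura map and argue in $\widehat{\bigotimes}_{1\leq v\leq l}U(\widehat{\mathfrak{gl}}(q_v))$) is workable in principle, but the step it hinges on is false. You claim that the image of $\Delta_l|_{Y_a}$, after applying $\bigotimes_v\ev^{q_v}$, is supported only on the tensor slots with $b_{a-1}<v\leq b_a$, so that distinct $a$'s occupy disjoint slots and hence commute. This is not what $\Delta_l$ does. The maps $\Delta_B$ send the factors indexed $v-1$ and $v$ into the \emph{same} slot $Y_{\hbar,\ve}(\widehat{\mathfrak{sl}}(n_{v-1}))$, via $\Psi_1^{n_{v-1}-n_v,n_{v-1}}$ and $\Psi_2^{n_v,n_{v-1}}$ respectively; and the explicit formula at the end of Section~5, $\Delta_l(H_{i,1})^{(a)}=\sum_{1\leq s\leq b_a}H^{(s)}_{i+q_s-n_a}+B_i+C_i+D_i$ (as well as $\widetilde{\mu}(W^{(1)}_{p,q})=\sum_{r}E^{(r)}_{p,q}[-1]$), shows that the $a$-th factor occupies \emph{all} slots $1\leq v\leq b_a$. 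So for $a<a'$ the supports of $\Phi_a$ and $\Phi_{a'}$ overlap in the slots $1,\dots,b_a$, and no ``disjoint support'' argument is available; moreover the images of $\Psi_1$ and $\Psi_2$ inside a single slot involve cross terms $E_{i,k}t^{-s-1}E_{k,i}t^{s+1}$ with $k$ ranging over the other block, so their commutativity is a genuine theorem, not a triviality.

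The paper's actual argument is much shorter and sidesteps this entirely: Theorem~\ref{Main} asserts that $\Phi$ is a single algebra homomorphism defined on the tensor product $Y^{n_1,\dots,n_v}_{\hbar,\ve}=\bigotimes_{a}Y_{\hbar,\ve_a}(\widehat{\mathfrak{sl}}(n_a-n_{a+1}))$. For $a\neq a'$ the elements $x^{(a)}$ and $y^{(a')}$ already commute in the source, so their images $\Phi_a(x)$ and $\Phi_{a'}(y)$ commute in $\mathcal{U}(\mathcal{W}^k(\mathfrak{gl}(N),f))$. All of the nontrivial commutativity in the overlapping slots is packaged into the inputs that make $\Delta_l$ (hence $\Phi$) well defined on the tensor product, namely that $\Psi_1^{n,m+n}\otimes\Psi_2^{m,m+n}$ is a homomorphism on $Y_{\hbar,\ve}(\widehat{\mathfrak{sl}}(n))\otimes Y_{\hbar,\ve+n\hbar}(\widehat{\mathfrak{sl}}(m))$ (part 3 of the cited theorem from \cite{U8}, \cite{U10}) and that $\Delta^n$ is a homomorphism. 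If you want to keep your route, you must replace ``disjoint slots'' by an appeal to exactly these facts, at which point you have reproduced the paper's proof.
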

Theorem~\ref{B} will be connected to the Gaiotto-Rapcak's triality.
Gaiotto and Rapcak \cite{GR} introduced a vertex algebra called the $Y$-algebra. The $Y$-algebra can be interpreted as a truncation of $\mathcal{W}_{1+\infty}$-algebra (\cite{GG}) whose universal enveloping algebra is isomorphic to the affine Yangian of $\widehat{\mathfrak{gl}}(1)$ up to suitable completions (see \cite{AS}, \cite{T} and \cite{MO}). Gaiotto-Rapcak \cite{GR} conjectured a triality of the isomorphism of $Y$-algebras. Let us take a nilpotent element $f\in\mathfrak{sl}(m+n)$ of type $(n^1,1^m)$. It is known that some kinds of $Y$-algebras can be realized as a coset of the pair of  $\mathcal{W}^k(\mathfrak{sl}(m+n),f_{n,m})$ and the universal affine vertex algebra associated with $\mathfrak{gl}(m)$ up to Heisenberg algebras. In this case, Creutzig-Linshaw \cite{CR} have proved the triality conjecture. This result is the generalization of the Feigin-Frenkel duality and the coset realization of principal $W$-algebra.

By Theorem~\ref{B}, for a non-critical level, the completion of image of $\Phi_1$ coincides with the subalgebra isomorphic to the universal enveloping algebra of a subalgebra which is expected to be isomorphic to the rectangular $W$-algebra associated with $\mathfrak{gl}((n_1-n_2)b_1)$ and a nilpotent element of type $(n_1-n_2)^{b_1}$. Especially, in the case that $b_1=1,2$, this subalgebra coincides with the rectangular $W$-algebra. By Theorem~\ref{A}, for a non-critical level, we find that $\Phi_a$ becomes a homomorphism from the affine Yangian to the universal enveloping algebra of the coset of  the pair of $\mathcal{W}^k(\mathfrak{gl}(N),f)$ and this subalgebra. We expect that this result will be connected to obtain the generalization of the Gaiotto-Rapcak's triality.

\section{$W$-algebras of type $A$}
We fix some notations for vertex algebras. For a vertex algebra $V$, we denote the generating field associated with $v\in V$ by $v(z)=\displaystyle\sum_{n\in\mathbb{Z}}\limits v_{(n)}z^{-n-1}$. We also denote the OPE of $V$ by
\begin{equation*}
u(z)v(w)\sim\displaystyle\sum_{s\geq0}\limits \dfrac{(u_{(s)}v)(w)}{(z-w)^{s+1}}
\end{equation*}
for all $u, v\in V$. We denote the vacuum vector (resp.\ the translation operator) by $|0\rangle$ (resp.\ $\partial$).
We denote the universal affine vertex algebra associated with a finite dimensional Lie algebra $\mathfrak{g}$ and its inner product $\kappa$ by $V^\kappa(\mathfrak{g})$. By the PBW theorem, we can identify $V^\kappa(\mathfrak{g})$ with $U(t^{-1}\mathfrak{g}[t^{-1}])$. In order to simplify the notation, here after, we denote the generating field $(ut^{-1})(z)$ as $u(z)$ for $u\in\mathfrak{g}$. By the definition of $V^\kappa(\mathfrak{g})$, the generating fields $u(z)$ and $v(z)$ satisfy the OPE
\begin{gather}
u(z)v(w)\sim\dfrac{[u,v](w)}{z-w}+\dfrac{\kappa(u,v)}{(z-w)^2}\label{OPE1}
\end{gather}
for all $u,v\in\mathfrak{g}$.

We take a positive integer $N$ and its partition:
\begin{equation}
N=\displaystyle\sum_{i=1}^lq_i,\qquad q_1\geq q_{2}\geq\cdots\geq q_l.\label{cond:q}
\end{equation}
For $\{q_i\}_{i=1}^l$ in \eqref{cond:q}, we set $\{n_s\}_{s=1}^{v+1}$ and $\{b_s\}_{s=1}^v$ as
\begin{equation*}
q_{b_{s-1}}=n_{s-1}>q_{b_{s-1}+1}=q_{b_{s-1}+2}=\cdots=q_{b_s}=n_s>q_{b_s+1}=n_{s+1}, n_{v+1}=0.
\end{equation*}
We also fix an inner product of $\mathfrak{gl}(N)$ determined by
\begin{equation*}
(E_{i,j}|E_{p,q})=k\delta_{i,q}\delta_{p,j}+\delta_{i,j}\delta_{p,q}.
\end{equation*}
For $1\leq i\leq N$, we set $1\leq \col(i)\leq l$ and $q_1-q_{\col(i)}<\row(i)\leq q_1$ satisfying
\begin{gather*}
\col(i)=s\text{ if }\sum_{j=1}^{s-1}q_j<i\leq\sum_{i=1}^sq_j,\ 
\row(i)=i-\sum_{j=1}^{\col(i)-1}q_j+q_1-q_{\col(i)}.
\end{gather*}
We take a nilpotent element $f$ as 
\begin{equation*}
f=\sum_{1\leq j\leq N}\limits e_{\hat{j},j},
\end{equation*}
where the integer $1\leq \hat{j}\leq N$ is determined by
\begin{equation*}
\col(\hat{j})=\col(j)+1,\ \row(\hat{j})=\row(j).
\end{equation*}
The $W$-algebra $\mathcal{W}^k(\mathfrak{gl}(N),f)$ is a vertex algebra associated with a finite dimensional reductive Lie algebra $\mathfrak{g}$, a nilpotent element $f$ and a complex number $k$. It is defined by the quantum Drinfeld-Sokolov reduction (see \cite{KW1} and \cite{KW2}).
Let us set the inner product on $\mathfrak{gl}(q_s)$ by
\begin{equation*}
\kappa_s(E_{i,j},E_{p,q})=\delta_{j,p}\delta_{i,q}\alpha_s+\delta_{i,j}\delta_{p,q},
\end{equation*}
where $\alpha_s=k+N-q_s$. Then, by Corollary 5.2 in \cite{Genra}, the $W$-algebra $\mathcal{W}^k(\mathfrak{gl}(N),f)$ can be embedded into $\bigotimes_{1\leq s\leq l}V^{\kappa_s}(\mathfrak{gl}(q_s))$. This embedding is called the Miura map. 
\begin{Theorem}[Theorem 3.6 in \cite{U9}]\label{Generators}
We set $\gamma_a=\sum_{u=a+1}^{l}\limits \alpha_{u}$. For positive integers $q_1-q_{v}<p,q\leq q_1-q_{v+1}$, the folloing elemets of $\bigotimes_{1\leq s\leq l}V^{\kappa_s}(\mathfrak{gl}(q_s))$ are contained in $\mathcal{W}^k(\mathfrak{gl}(N),f)$:
\begin{align*}
W^{(1)}_{p,q}&=\sum_{\substack{1\leq r\leq l}}E^{(r)}_{p,q}[-1],\\
W^{(2)}_{p,q}&=-\sum_{\substack{1\leq r\leq l}}\gamma_{r}E^{(r)}_{p,q}[-2]+\sum_{\substack{r_1<r_2\\u>q_1-q_v}}\limits E^{(r_1)}_{u,q}[-1]E^{(r_2)}_{p,u}[-1]\\
&\quad-\sum_{\substack{r_1\geq r_2\\q_1-q_{r_2}<u\leq q_1-q_v}}\limits E^{(r_1)}_{u,q}[-1]E^{(r_2)}_{p,u}[-1],
\end{align*}
where we denote $E_{i,j}t^{-u}\in U(t^{-1}\mathfrak{gl}(q_s)[t^{-1}])=V^\kappa(\mathfrak{gl}(q_s))$ by $E_{i+(q_1-q_s),j+(q_1-q_s)}^{(s)}[-u]$.
\end{Theorem}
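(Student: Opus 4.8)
The plan is to identify $\mathcal{W}^k(\mathfrak{gl}(N),f)$ inside $\bigotimes_{1\le s\le l}V^{\kappa_s}(\mathfrak{gl}(q_s))$ as the joint kernel of the screening operators and then to verify directly that $W^{(1)}_{p,q}$ and $W^{(2)}_{p,q}$ lie in this kernel. Besides the embedding quoted above, the results of \cite{Genra} give that the image of the Miura map is
\begin{equation*}
\mu\bigl(\mathcal{W}^k(\mathfrak{gl}(N),f)\bigr)=\bigcap_{a}\Ker(Q_a),
\end{equation*}
where $Q_a$ ranges over the screening operators attached to the adjacent pairs of columns of the pyramid of $f$. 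Each $Q_a$ is the zero mode of an explicit field built only from the currents $E^{(a)}_{i,j}(z)$ and $E^{(a+1)}_{i,j}(z)$ on the rows $q_1-q_{a+1}<m\le q_1$ that columns $a$ and $a+1$ share, so it acts only on the $a$-th and $(a+1)$-st tensor factors. Since $W^{(1)}_{p,q}$ and $W^{(2)}_{p,q}$ are finite sums, hence honest elements of $\bigotimes_s V^{\kappa_s}(\mathfrak{gl}(q_s))$, it is enough to prove $Q_aW^{(1)}_{p,q}=0$ and $Q_aW^{(2)}_{p,q}=0$ for every $a$.

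For the weight-one generator $W^{(1)}_{p,q}=\sum_rE^{(r)}_{p,q}[-1]$ this is immediate: among the $l$ summands only $r=a$ and $r=a+1$ have a nontrivial OPE with the screening field of $Q_a$ by \eqref{OPE1}, and since $W^{(1)}_{p,q}$ enters with the same coefficient in the two relevant factors while the screening field is built ``antidiagonally'' from the two columns, the two contributions cancel. This also fixes the normalization of $Q_a$ that one uses below.

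The quadratic generator is the crux. Applying $Q_a$ to $W^{(2)}_{p,q}$ and expanding with \eqref{OPE1}, one collects the single-contraction terms from the derivative term $-\sum_r\gamma_rE^{(r)}_{p,q}[-2]$, the single-contraction terms obtained by contracting one factor of each monomial $E^{(r_1)}_{u,q}[-1]E^{(r_2)}_{p,u}[-1]$, and the double-contraction (normal-ordering) terms. The required vanishing is forced by the interlocking of three features of the formula in Theorem~\ref{Generators}: (i) the coefficients $\gamma_r=\sum_{u=r+1}^l\alpha_u$ telescope, so $\gamma_a-\gamma_{a+1}=\alpha_{a+1}$ is exactly the level $\alpha_{a+1}$ of the $(a+1)$-st current algebra produced by the double contraction; (ii) the cut-offs of the internal index $u$ at $q_1-q_v$ and at $q_1-q_{r_2}$ are placed so that, once $r_1$ or $r_2$ is specialized to $a$ or $a+1$, the surviving monomials regroup into the expansion of a single column-$a$ or column-$(a+1)$ current and so absorb the normal-ordering corrections; (iii) the opposite signs of the $r_1<r_2$ sum and the $r_1\ge r_2$ sum are precisely what makes the genuinely column-crossing monomials cancel in pairs. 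A convenient way to organize this is induction on $l$: deleting the last (shortest) column replaces $f$ by the nilpotent $f'$ of type $(1^{q_1-q_2},\dots,(l-2)^{q_{l-2}-q_{l-1}},(l-1)^{q_{l-1}})$ and, after the level shift $k'=k+q_l$ that keeps the $\alpha_s$ unchanged, identifies $Q_1,\dots,Q_{l-2}$ with the screening operators of $\mathcal{W}^{k'}(\mathfrak{gl}(N-q_l),f')$; the part of $W^{(i)}_{p,q}$ supported on the first $l-1$ tensor factors then differs from the corresponding generator of the smaller $W$-algebra only by a multiple of $\partial W^{(1)}_{p,q}$, hence lies in it by the inductive hypothesis, and one is left to control the terms involving the $l$-th factor, for which only $Q_{l-1}$ and (for the mixed quadratic monomials) a few of the earlier screenings are relevant. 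The main obstacle is exactly this last finite but term-heavy OPE computation: one must keep the precise ranges of the $u$-summation and all normal-ordering corrections and see that everything assembles to zero using the exact values of $\gamma_a$ and $\alpha_s$. No conceptual difficulty beyond the bookkeeping is expected; the analogous cancellations in the sub-partition cases $(q_u,\dots,q_{u+1})$ were carried out in \cite{U7} and \cite{U9}, and the present argument is their uniform extension.
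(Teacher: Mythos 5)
The paper does not actually prove this statement here --- it is imported verbatim from Theorem 3.6 of \cite{U9} --- so your screening-operator strategy must stand on its own, and it has two genuine gaps. First, the foundational step is overstated: the results of \cite{Genra} give the inclusion $\mu(\mathcal{W}^k(\mathfrak{gl}(N),f))\subseteq\bigcap_a\Ker(Q_a)$ for all $k$, but the reverse inclusion --- which is the direction you need in order to conclude that an element annihilated by every $Q_a$ actually lies in the $W$-algebra --- is only established for generic $k$. Since Theorem~\ref{Generators} is asserted for arbitrary $k$ (and the rest of the paper uses it at particular levels), you must either add a specialization argument (e.g.\ note that membership in $\Ker d_0$ is a condition whose coefficients are polynomial in $k$, so generic vanishing propagates to all $k$) or work directly with the BRST differential $d_0$ of Definition~\ref{T125}, which is the route the author's own machinery takes in the appendix, following Arakawa--Molev, for the closely related elements $\widetilde{W}^{(r)}_{i,j}$.

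Second, and more seriously, the heart of the theorem is the identity $Q_aW^{(2)}_{p,q}=0$ (equivalently $d_0W^{(2)}_{p,q}=0$), and you do not prove it. You list three structural features --- the telescoping $\gamma_a-\gamma_{a+1}=\alpha_{a+1}$, the placement of the cut-offs of the internal index $u$ at $q_1-q_v$ and $q_1-q_{r_2}$, and the opposite signs of the $r_1<r_2$ and $r_1\geq r_2$ sums --- that should make the cancellation work, and then declare the remaining ``term-heavy OPE computation'' to be bookkeeping. For a statement whose entire content is that one explicit quadratic expression is killed by explicit operators, that computation \emph{is} the proof; explaining why it ought to cancel is not a substitute for carrying it out. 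The inductive organization you propose also needs verification at its key step: you assert that the truncation of $W^{(2)}_{p,q}$ to the first $l-1$ tensor factors differs from the corresponding generator of $\mathcal{W}^{k+q_l}(\mathfrak{gl}(N-q_l),f')$ only by a multiple of $\partial W^{(1)}_{p,q}$, but the summation ranges in the quadratic terms depend on the block $q_1-q_v<p,q\leq q_1-q_{v+1}$ containing $p,q$, and the mixed monomials involving the $l$-th factor do not visibly reduce to a single screening $Q_{l-1}$. Until the cancellation is actually exhibited --- in particular for an adjacent pair of columns of unequal height, where the computation genuinely differs from the rectangular case of \cite{U7} --- this is a plan for a proof rather than a proof.
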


We denote by $\mathcal{W}^k(\mathfrak{gl}(ln),(l^n))$ the $W$-algebra $\mathcal{W}^k(\mathfrak{gl}(N),f)$ in the case that $q_1=q_2=\cdots=q_l=n$ and call a rectangular $W$-algebra of type $A$. Arakawa-Molev \cite{AM} gave strong generators of $\mathcal{W}^k(\mathfrak{gl}(ln),(l^n))$ explicitly. In the appendix, we will give more elements of $\mathcal{W}^k(\mathfrak{gl}(N),f)$:
\begin{equation*}
\{\widetilde{W}^{(r)}_{i,j}\mid1\leq i,j\leq n_1-n_2,1\leq r\leq b_1\},
\end{equation*}
which corresponds to strong generators of Arakawa-Molev in the case that $q_1=q_2=\cdots=q_l=n$. In the case that $b_1=2$, we have known the relationship between the elements $\widetilde{W}_{i,j}^{(r)}$ and $W_{i,j}^{(r)}$.
\begin{Theorem}\label{emb}
There exists an embedding
\begin{equation*}
\iota\colon\mathcal{W}^{k+n_2+\sum_{s=3}^{l}q_s}(\mathfrak{gl}(2(n_1-n_2)),(2^{n_1-n_2}))\to\mathcal{W}^k(\mathfrak{gl}(N),f)
\end{equation*}
defined by
\begin{equation*}
\widetilde{W}^{(1)}_{i,j}\mapsto W^{(1)}_{i,j},\  \widetilde{W}^{(2)}_{i,j}\mapsto W^{(2)}_{i,j}+\gamma_1\partial W^{(1)}_{i,j}.
\end{equation*}
\end{Theorem}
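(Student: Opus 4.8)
The plan is to construct $\iota$ through the Miura free-field realisations of the two $W$-algebras, reducing the statement to an operator product expansion (OPE) computation inside the rectangular $W$-algebras, and then to deduce injectivity from the induced map on associated graded algebras. We use throughout that $b_1=2$, so $q_1=q_2=n_1>q_3$, and we abbreviate $m=n_1-n_2$, $k'=k+n_2+\sum_{s=3}^{l}q_s$ and $k_0=k+\sum_{s=3}^{l}q_s$, so $k'=k_0+n_2$. Since $\mathcal{W}^{k'}(\mathfrak{gl}(2m),(2^m))$ is strongly and freely generated by $\{\widetilde W^{(1)}_{i,j},\widetilde W^{(2)}_{i,j}\mid 1\le i,j\le m\}$ with OPEs computed by Arakawa--Molev \cite{AM}, a vertex-algebra homomorphism out of it amounts to a choice of fields in the target obeying those relations. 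Hence Theorem~\ref{emb} reduces to two claims: first, that $W^{(1)}_{i,j}$ and $\widehat W^{(2)}_{i,j}:=W^{(2)}_{i,j}+\gamma_1\partial W^{(1)}_{i,j}$ in $\mathcal{W}^{k}(\mathfrak{gl}(N),f)$ satisfy the Arakawa--Molev OPEs at level $k'$; second, that the resulting map is injective.

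For the first claim we work inside the Miura image $\mathcal{W}^{k}(\mathfrak{gl}(N),f)\subset\bigotimes_{1\le s\le l}V^{\kappa_s}(\mathfrak{gl}(q_s))$. Specialising Theorem~\ref{Generators} to the jump index $v=b_1=2$ --- which is exactly why the admissible range is $1\le p,q\le m$ --- the Miura images of $W^{(1)}_{i,j},W^{(2)}_{i,j}$ involve only the tensor factors with $q_s=n_1$, i.e.\ $s=1,2$, on which the inner product parameter equals $\alpha_1=\alpha_2=k+N-n_1$; moreover $\gamma_1-\gamma_2=\alpha_2$ and the level shift is calibrated precisely so that $k'+m=k+N-n_1=\alpha_2$. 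Introducing the Miura generators $\mathbf W^{(1)}_{i,j}=E^{(1)}_{i,j}[-1]+E^{(2)}_{i,j}[-1]$ and $\mathbf W^{(2)}_{i,j}=-\alpha_2E^{(1)}_{i,j}[-2]+\sum_{u=1}^{n_1}E^{(1)}_{u,j}[-1]E^{(2)}_{i,u}[-1]$ of the rectangular $W$-algebra $\mathcal{W}^{k_0}(\mathfrak{gl}(2n_1),(2^{n_1}))$ realised in $V^{\kappa_1}(\mathfrak{gl}(n_1))\otimes V^{\kappa_2}(\mathfrak{gl}(n_1))$, one checks by direct substitution that $W^{(1)}_{i,j}=\mathbf W^{(1)}_{i,j}$ and $\widehat W^{(2)}_{i,j}=\mathbf W^{(2)}_{i,j}+\alpha_2\partial\mathbf W^{(1)}_{i,j}$. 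Thus everything reduces to the purely rectangular statement that $\widetilde W^{(1)}_{i,j}\mapsto\mathbf W^{(1)}_{i,j}$, $\widetilde W^{(2)}_{i,j}\mapsto\mathbf W^{(2)}_{i,j}+\alpha_2\partial\mathbf W^{(1)}_{i,j}$ ($1\le i,j\le m$) extends to an embedding $\mathcal{W}^{k_0+n_2}(\mathfrak{gl}(2m),(2^m))\hookrightarrow\mathcal{W}^{k_0}(\mathfrak{gl}(2n_1),(2^{n_1}))$; this we prove by running the non-commutative Wick formula from \eqref{OPE1} on the products of $\mathbf W^{(1)}_{i,j}$ and $\widehat{\mathbf W}^{(2)}_{i,j}:=\mathbf W^{(2)}_{i,j}+\alpha_2\partial\mathbf W^{(1)}_{i,j}$, keeping all matrix indices in $\{1,\dots,m\}$, and comparing with the Arakawa--Molev structure constants at level $k_0+n_2$.

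The main obstacle is that the two matrix sizes differ, so $\iota$ is \emph{not} induced by the obvious inclusion $V^{\kappa_1}(\mathfrak{gl}(m))\otimes V^{\kappa_2}(\mathfrak{gl}(m))\hookrightarrow V^{\kappa_1}(\mathfrak{gl}(n_1))\otimes V^{\kappa_2}(\mathfrak{gl}(n_1))$: the quadratic part of $\mathbf W^{(2)}_{i,j}$ sums over all $u\in\{1,\dots,n_1\}$, and the summands with $u>m$ do contribute to the OPEs under consideration. The heart of the proof is to show that, after the contractions dictated by \eqref{OPE1}, each such $u>m$ summand is either annihilated by a Kronecker delta or recombines --- together with the first- and second-order poles of the affine OPE --- into exactly the polynomials in $\mathbf W^{(1)}$, $\widehat{\mathbf W}^{(2)}$, $\partial\mathbf W^{(1)}$ occurring on the rectangular side, the only net effect being the level shift $k_0\mapsto k_0+n_2$ (with $n_2=n_1-m$ counting precisely the "extra" rows and columns). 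The twist by $\alpha_2\partial\mathbf W^{(1)}_{i,j}$ --- equivalently the term $\gamma_1\partial W^{(1)}_{i,j}$ in the statement, as opposed to $\gamma_2\partial W^{(1)}_{i,j}$, which would merely recover $\mathbf W^{(2)}$ --- is the correction that cancels the residual anomalous term, so that the OPEs close in the $\mathfrak{gl}$-normalisation used for the rectangular generators. This bookkeeping of the $u>m$ contractions is the one substantial calculation; the rest is formal.

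For injectivity, $\iota$ is compatible with the standard increasing filtrations, and $\operatorname{gr}\iota$ is a homomorphism of polynomial algebras sending the free generators of $\operatorname{gr}\mathcal{W}^{k'}(\mathfrak{gl}(2m),(2^m))$ to the symbols of $W^{(1)}_{i,j}$ and $\widehat W^{(2)}_{i,j}$. Composing with the associated graded of the Miura map, these symbols become the expressions $E^{(1)}_{i,j}+E^{(2)}_{i,j}$ and $\sum_{u=1}^{n_1}E^{(1)}_{u,j}E^{(2)}_{i,u}$ in a polynomial ring, and one checks that these $2m^2$ elements are algebraically independent (equivalently, that the Miura map of $\mathcal{W}^{k'}(\mathfrak{gl}(2m),(2^m))$ is injective on associated graded). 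Therefore $\operatorname{gr}\iota$, and hence $\iota$ itself, is injective.
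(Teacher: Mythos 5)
Your proposal follows essentially the same route as the paper: both identify $W^{(1)}_{i,j}$ and $W^{(2)}_{i,j}+\gamma_1\partial W^{(1)}_{i,j}$ (for $1\leq i,j\leq n_1-n_2$) through their Miura images with twists of the generators of the intermediate rectangular $W$-algebra $\mathcal{W}^{k+\sum_{s=3}^{l}q_s}(\mathfrak{gl}(2n_1),(2^{n_1}))$, and thereby reduce the theorem to an embedding of rectangular $W$-algebras with level shifted by $n_2$, verified by matching OPEs of the weight-one and weight-two strong generators. The differences are only in how details are discharged: the paper quotes the rectangular OPEs from \cite{Rap} and \cite{U8} where you propose to rederive them by Wick calculus (correctly isolating the $u>m$ contractions as the source of the level shift), and you supply an explicit associated-graded argument for injectivity that the paper leaves implicit.
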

\begin{proof}
By the form of $W^{(r)}_{i,j}$ and $W^{(2)}_{i,j}$, we find that the OPEs of $\{W^{(r)}_{i,j}\mid r=1,2,1\leq i,j\leq n_1-n_2\}$ in $\mathcal{W}^k(\mathfrak{gl}(N),f)$ is the same as those of
\begin{equation*}
\{\widetilde{W}^{(1)}_{i,j},\widetilde{W}^{(2)}_{i,j}-\gamma_{1}\partial \widetilde{W}^{(1)}_{i,j}\mid r=1,2,1\leq i,j\leq n_1-n_2\}
\end{equation*}
in $\mathcal{W}^{k+\sum_{s=3}^{l}q_s}(\mathfrak{gl}(2n_1),(2^{n_1}))$. 
In \cite{Rap}, the OPEs of  rectangular $W$-algebras have been computed (see also \cite{U8}). Then, we find an embedding
\begin{equation*}
\iota_1\colon\mathcal{W}^{k+n_2+\sum_{s=3}^{l}q_s}(\mathfrak{gl}(2(n_1-n_2)),(2^{n_1-n_2}))\to\mathcal{W}^{k+\sum_{s=3}^{l}q_s}(\mathfrak{gl}(2n_1),(2^{n_1}))
\end{equation*}
given by $\widetilde{W}^{(r)}_{i,j}\mapsto\widetilde{W}^{(r)}_{i,j}$ for $1\leq i,j\leq n_1-n_2$. 
\end{proof}
Next, let us recall the universal enveloping algebra of a vertex algebra.
For any vertex algebra $V$, let $L(V)$ be the Borcherds Lie algebra, that is,
\begin{align}
 L(V)=V{\otimes}\mathbb{C}[t,t^{-1}]/\text{Im}(\partial\otimes\id +\id\otimes\frac{d}{d t})\label{844},
\end{align}
where the commutation relation is given by
\begin{align*}
 [ut^a,vt^b]=\sum_{r\geq 0}\begin{pmatrix} a\\r\end{pmatrix}(u_{(r)}v)t^{a+b-r}
\end{align*}
for all $u,v\in V$ and $a,b\in \mathbb{Z}$. 
\begin{Definition}[Section~6 in \cite{MNT}]\label{Defi}
We set $\mathcal{U}(V)$ as the quotient algebra of the standard degreewise completion of the universal enveloping algebra of $L(V)$ by the completion of the two-sided ideal generated by
\begin{gather}
(u_{(a)}v)t^b-\sum_{i\geq 0}
\begin{pmatrix}
 a\\i
\end{pmatrix}
(-1)^i(ut^{a-i}vt^{b+i}-(-1)^avt^{a+b-i}ut^{i}),\label{241}\\
|0\rangle t^{-1}-1.
\end{gather}
We call $\mathcal{U}(V)$ the universal enveloping algebra of $V$.
\end{Definition}
Induced by the Miura map $\mu$, we obtain the embedding
\begin{equation*}
\widetilde{\mu}\colon \mathcal{U}(\mathcal{W}^{k}(\mathfrak{gl}(N),f))\to {\widehat{\bigotimes}}_{1\leq s\leq l}U(\widehat{\mathfrak{gl}}(q_s)),
\end{equation*}
where ${\widehat{\bigotimes}}_{1\leq s\leq l}U(\widehat{\mathfrak{gl}}(q_s))$ is the standard degreewise completion of $\bigotimes_{1\leq s\leq l}U(\widehat{\mathfrak{gl}}(q_s))$.
\section{The affine Yangian and its coproduct}
The affine Yangian of type $A$ was first introduced by Guay (\cite{Gu2} and \cite{Gu1}). The affine Yangian $Y_{\hbar,\ve}(\widehat{\mathfrak{sl}}(n))$ is a 2-parameter Yangian and is the deformation of the universal enveloping algebra of the central extension of $\mathfrak{sl}(n)[u^{\pm1},v]$. The affine Yangian has several presentations. In this section, we define the affine Yangian of type $A$ by using the minimalistic presentation given in Guay-Nakajima-Wendlandt \cite{GNW}.

Hereafter, we sometimes identify $\{0,1,2,\cdots,n-1\}$ with $\mathbb{Z}/n\mathbb{Z}$. We also set$\{X,Y\}=XY+YX$ and
\begin{equation*}
a_{i,j} =\begin{cases}
2&\text{if } i=j, \\
-1&\text{if }j=i\pm 1,\\
0&\text{otherwise}
	\end{cases}
\end{equation*}
for $i\in\mathbb{Z}/n\mathbb{Z}$.
\begin{Definition}\label{Prop32}
Suppose that $n\geq3$. The affine Yangian $Y_{\hbar,\ve}(\widehat{\mathfrak{sl}}(n))$ is the associative algebra  generated by $X_{i,r}^{+}, X_{i,r}^{-}, H_{i,r}$ $(i \in \{0,1,\cdots, n-1\}, r = 0,1)$ subject to the following defining relations:
\begin{gather}
[H_{i,r}, H_{j,s}] = 0,\label{Eq2.1}\\
[X_{i,0}^{+}, X_{j,0}^{-}] = \delta_{i,j} H_{i, 0},\label{Eq2.2}\\
[X_{i,1}^{+}, X_{j,0}^{-}] = \delta_{i,j} H_{i, 1} = [X_{i,0}^{+}, X_{j,1}^{-}],\label{Eq2.3}\\
[H_{i,0}, X_{j,r}^{\pm}] = \pm a_{i,j} X_{j,r}^{\pm},\label{Eq2.4}\\
[\tilde{H}_{i,1}, X_{j,0}^{\pm}] = \pm a_{i,j}\left(X_{j,1}^{\pm}\right),\text{ if }(i,j)\neq(0,n-1),(n-1,0),\label{Eq2.5}\\
[\tilde{H}_{0,1}, X_{n-1,0}^{\pm}] = \mp \left(X_{n-1,1}^{\pm}+(\ve+\dfrac{n}{2}\hbar) X_{n-1, 0}^{\pm}\right),\label{Eq2.6}\\
[\tilde{H}_{n-1,1}, X_{0,0}^{\pm}] = \mp \left(X_{0,1}^{\pm}-(\ve+\dfrac{n}{2}\hbar) X_{0, 0}^{\pm}\right),\label{Eq2.7}\\
[X_{i, 1}^{\pm}, X_{j, 0}^{\pm}] - [X_{i, 0}^{\pm}, X_{j, 1}^{\pm}] = \pm a_{ij}\dfrac{\hbar}{2} \{X_{i, 0}^{\pm}, X_{j, 0}^{\pm}\}\text{ if }(i,j)\neq(0,n-1),(n-1,0),\label{Eq2.8}\\
[X_{0, 1}^{\pm}, X_{n-1, 0}^{\pm}] - [X_{0, 0}^{\pm}, X_{n-1, 1}^{\pm}]= \mp\dfrac{\hbar}{2} \{X_{0, 0}^{\pm}, X_{n-1, 0}^{\pm}\} + (\ve+\dfrac{n}{2}\hbar) [X_{0, 0}^{\pm}, X_{n-1, 0}^{\pm}],\label{Eq2.9}\\
(\ad X_{i,0}^{\pm})^{1+|a_{i,j}|} (X_{j,0}^{\pm})= 0 \ \text{ if }i \neq j, \label{Eq2.10}
\end{gather}
where we set $\widetilde{H}_{i,1}=H_{i,1}-\dfrac{\hbar}{2}H_{i,0}^2$.
\end{Definition}
We set an affine Lie algebra 
\begin{equation*}
\widehat{\mathfrak{gl}}(n)=\mathfrak{gl}(n)\otimes\mathbb{C}[z^{\pm1}]\oplus\mathbb{C}\tilde{c}\oplus\mathbb{C}z
\end{equation*}
whose commutator relations are given by
\begin{gather*}
\text{$z$ and $\tilde{c}$ are central elements of }\widehat{\mathfrak{gl}}(n),\\
\begin{align*}
[E_{i,j}\otimes t^u, E_{p,q}\otimes t^v]
&=(\delta_{j,p}E_{i,q}-\delta_{i,q}E_{p,j})\otimes t^{u+v}+\delta_{u+v,0}u(\delta_{i,q}\delta_{j,p}\widetilde{c}+\delta_{i,j}\delta_{p,q}z).
\end{align*}
\end{gather*}
We take Chevalley generators of $\widehat{\mathfrak{sl}}(n)$ as
\begin{gather*}
h_i=\begin{cases}
E_{n,n}-E_{1,1}+\widetilde{c}&\text{ if }i=0,\\
E_{i,i}-E_{i+1,i+1}&\text{ if }i\neq0,
\end{cases}\\
x^+_i=\begin{cases}
E_{n,1}t&\text{ if }i=0,\\
E_{i,i+1}&\text{ if }i\neq0.
\end{cases}\ x^-_i=\begin{cases}
E_{1,n}t^{-1}&\text{ if }i=0,\\
E_{i+1,i}&\text{ if }\neq0.
\end{cases}
\end{gather*}
By the defining relations \eqref{Eq2.1}-\eqref{Eq2.10}, there exists a homomorphism from the universal enveloping algebra of $\widehat{\mathfrak{sl}}(n)$ to the affine Yangian $Y_{\hbar,\ve}(\widehat{\mathfrak{sl}}(n))$ given by $h_i\mapsto H_{i,0}$ and $x^\pm_i\mapsto X^\pm_{i,0}$. We denote the image of $x\in U(\widehat{\mathfrak{sl}}(n))$ via this homomorphism by $x\in Y_{\hbar,\ve}(\widehat{\mathfrak{sl}}(n))$.

Let us set the degree of $Y_{\hbar,\ve}(\widehat{\mathfrak{sl}}(n))$ by
\begin{equation*}
\text{deg}(H_{i,r})=0,\ \text{deg}(X^\pm_{i,r})=\begin{cases}
\pm1&\text{ if }i=0,\\
0&\text{ if }i\neq0.
\end{cases}
\end{equation*}
This degree is compatible with the natural degree on $\widehat{\mathfrak{sl}}(n)$. We denote the standard degreewise completion of $Y_{\hbar,\ve}(\widehat{\mathfrak{sl}}(n))$ by $\widetilde{Y}_{\hbar,\ve}(\widehat{\mathfrak{sl}}(n))$. 
By using the minimalistic presentation of the Yangian, Guay-Nakajima-Wendlandt \cite{GNW} gave a coproduct for the Yangian associated with a Kac-Moody Lie algebra of the affine type.
\begin{Theorem}[Theorem~5.2 in \cite{GNW}]
There exists an algebra homomorphism
\begin{equation*}
\Delta^n\colon Y_{\hbar,\ve}(\widehat{\mathfrak{sl}}(n))\to Y_{\hbar,\ve}(\widehat{\mathfrak{sl}}(n))\widehat{\otimes} Y_{\hbar,\ve}(\widehat{\mathfrak{sl}}(n))
\end{equation*}
determined by
\begin{gather*}
\Delta^n(X^\pm_{j,0})=X^\pm_{j,0}\otimes1+1\otimes X^\pm_{j,0}\text{ for }0\leq j\leq n-1,\\
\Delta^n(\widetilde{H}_{i,1})=\widetilde{H}_{i,1}\otimes1+1\otimes \widetilde{H}_{i,1}+A_i\text{ for }1\leq i\leq n-1,
\end{gather*}
where $Y_{\hbar,\ve}(\widehat{\mathfrak{sl}}(n))\widehat{\otimes} Y_{\hbar,\ve}(\widehat{\mathfrak{sl}}(n))$ is the standard degreewise completion of $\otimes^2Y_{\hbar,\ve}(\widehat{\mathfrak{sl}}(n))$ and
\begin{align*}
A_i&=-\hbar(E_{i,i}\otimes E_{i+1,i+1}+E_{i+1,i+1}\otimes E_{i,i})\\
&\quad+\hbar\displaystyle\sum_{s \geq 0}  \limits\displaystyle\sum_{u=1}^{i}\limits (-E_{u,i}t^{-s-1}\otimes E_{i,u}t^{s+1}+E_{i,u}t^{-s}\otimes E_{u,i}t^s)\\
&\quad+\hbar\displaystyle\sum_{s \geq 0} \limits\displaystyle\sum_{u=i+1}^{n}\limits (-E_{u,i}t^{-s}\otimes E_{i,u}t^{s}+E_{i,u}t^{-s-1}\otimes E_{u,i}t^{s+1})\\
&\quad-\hbar\displaystyle\sum_{s \geq 0}\limits\displaystyle\sum_{u=1}^{i}\limits (-E_{u,i+1}t^{-s-1}\otimes E_{i+1,u}t^{s+1}+E_{i+1,u}t^{-s}\otimes E_{u,i+1}t^s)\\
&\quad-\hbar\displaystyle\sum_{s \geq 0}\limits\displaystyle\sum_{u=i+1}^{n} \limits (-E_{u,i+1}t^{-s}\otimes E_{i+1,u}t^{s}+E_{i+1,u}t^{-s-1}\otimes E_{u,i+1}t^{s+1}).
\end{align*}
\end{Theorem}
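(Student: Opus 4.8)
The plan is to prove that the assignment $\Delta^n$ respects the defining relations \eqref{Eq2.1}--\eqref{Eq2.10}, hence extends uniquely to an algebra homomorphism into the degreewise completion $Y_{\hbar,\ve}(\widehat{\mathfrak{sl}}(n))\widehat{\otimes}Y_{\hbar,\ve}(\widehat{\mathfrak{sl}}(n))$. I would begin by checking that the targets make sense: each $A_i$ is an infinite sum, but in the grading on $\otimes^2 Y_{\hbar,\ve}(\widehat{\mathfrak{sl}}(n))$ the summand indexed by $s$ lies in a single graded component that moves off to infinity as $s$ grows, so every homogeneous piece of $A_i$ is finite and $A_i$ --- hence $\Delta^n(\widetilde{H}_{i,1})$ --- lies in the completion. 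Since for $n\geq3$ the algebra $Y_{\hbar,\ve}(\widehat{\mathfrak{sl}}(n))$ is generated by $\{X^{\pm}_{i,0}\}$ together with $\{\widetilde{H}_{i,1}\}_{1\leq i\leq n-1}$ (one recovers $H_{i,0}=[X^+_{i,0},X^-_{i,0}]$, then $X^{\pm}_{i,1}$ from \eqref{Eq2.5}--\eqref{Eq2.7} and $H_{i,1}$ from \eqref{Eq2.3}), the formulas above determine $\Delta^n$ completely; the real content is that this prescription is consistent.

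The relations among degree-$0$ generators come for free: on the subalgebra generated by $X^{\pm}_{i,0}$ and $H_{i,0}$ the map $\Delta^n$ is $u\mapsto u\otimes1+1\otimes u$ on Chevalley generators of $\widehat{\mathfrak{sl}}(n)$, i.e.\ the standard cocommutative coproduct of $U(\widehat{\mathfrak{sl}}(n))$, so \eqref{Eq2.1} with $r=s=0$, \eqref{Eq2.2}, \eqref{Eq2.4} with $r=0$, and the Serre relations \eqref{Eq2.10} hold automatically. For the degree-$1$ relations, applying $\Delta^n$ to \eqref{Eq2.5}--\eqref{Eq2.7} forces $\Delta^n(X^{\pm}_{j,1})$ to be $X^{\pm}_{j,1}\otimes1+1\otimes X^{\pm}_{j,1}$ plus the correction $(\pm a_{i,j})^{-1}[A_i,\Delta^n(X^{\pm}_{j,0})]$ (with the appropriate $(\ve+\tfrac{n}{2}\hbar)$-shift across the affine node), and one must verify this is independent of the node $i$ with $a_{i,j}\neq0$ used to produce it. Granting a well-defined formula $\Delta^n(X^{\pm}_{j,1})=X^{\pm}_{j,1}\otimes1+1\otimes X^{\pm}_{j,1}+B^{\pm}_j$, the remaining relations --- \eqref{Eq2.1} in mixed degrees, \eqref{Eq2.3}, \eqref{Eq2.4} with $r=1$, and the quadratic relations \eqref{Eq2.8}--\eqref{Eq2.9} --- reduce to explicit commutator computations in the completion, carried out with the $\widehat{\mathfrak{gl}}(n)$ brackets and the telescoping of the $\sum_{s\geq0}$ sums occurring in $A_i$ and $B^{\pm}_j$.

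The main obstacle is the quadratic relations \eqref{Eq2.8} and, above all, the affine-node relation \eqref{Eq2.9}: there the corrections enter bilinearly, so one must collect the cross terms $X^{\pm}_{i,0}\otimes X^{\pm}_{j,0}$ and $X^{\pm}_{j,0}\otimes X^{\pm}_{i,0}$ produced by $[B^{\pm}_i,\Delta^n(X^{\pm}_{j,0})]-[\Delta^n(X^{\pm}_{i,0}),B^{\pm}_j]$, keep track of the central and $\delta_{u+v,0}$ terms in the $\widehat{\mathfrak{gl}}(n)$ bracket, and check that the infinite sums telescope precisely into $\pm a_{ij}\tfrac{\hbar}{2}\{X^{\pm}_{i,0},X^{\pm}_{j,0}\}$ in the interior and into the extra $(\ve+\tfrac{n}{2}\hbar)$-term across the affine node. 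To control this I would follow the strategy of Guay--Nakajima--Wendlandt \cite{GNW}: equip $Y_{\hbar,\ve}(\widehat{\mathfrak{sl}}(n))$ with the height filtration in which $X^{\pm}_{i,1},H_{i,1}$ have filtration degree $1$ and $X^{\pm}_{i,0},H_{i,0}$ degree $0$, whose associated graded surjects onto $U$ of the (centrally extended) current algebra over $\widehat{\mathfrak{sl}}(n)$, on which $\Delta^n$ degenerates to the obvious cocommutative coproduct; this reduces the verification of most relations to leading order together with finitely many genuinely deformed identities, and it localizes the residual computation to rank-$1$ and rank-$2$ sub-configurations of Dynkin nodes, where the coproduct of the finite-type Yangian of Drinfeld can be imported. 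Coassociativity in the completed sense then follows by the same generator-by-generator comparison.
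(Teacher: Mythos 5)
This theorem is quoted from Guay--Nakajima--Wendlandt (Theorem~5.2 in \cite{GNW}); the paper offers no proof of its own, so there is nothing internal to compare your argument against. Judged on its own terms, your proposal is a correct and well-organized description of the GNW strategy --- reduce to the minimalistic presentation, note that the degree-zero relations follow from cocommutativity of the coproduct on $U(\widehat{\mathfrak{sl}}(n))$, and isolate the degree-one and quadratic relations as the genuine content --- but it is a plan rather than a proof. The two points you correctly identify as ``the real content'' are exactly the ones you do not carry out: (i) that the element $\Delta^n(X^{\pm}_{j,1})$ extracted from $[\Delta^n(\widetilde H_{i,1}),\Delta^n(X^{\pm}_{j,0})]$ via \eqref{Eq2.5}--\eqref{Eq2.7} is independent of the adjacent node $i$ used (this is where the precise form of $A_i$, including the $-\hbar(E_{i,i}\otimes E_{i+1,i+1}+E_{i+1,i+1}\otimes E_{i,i})$ term and the asymmetric split of the $u$-summation at $u=i$, is actually forced), and (ii) the telescoping of the cross terms in \eqref{Eq2.8}--\eqref{Eq2.9}, where the $(\ve+\tfrac n2\hbar)$-shift at the affine node must emerge from the central term $\delta_{u+v,0}\,u(\dots)$ of the $\widehat{\mathfrak{gl}}(n)$ bracket. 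Saying ``follow the strategy of \cite{GNW}'' at precisely these junctures turns the argument into a citation in disguise, which is acceptable for a quoted theorem but means the proposal does not independently establish the result.

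Two smaller points. First, your justification that $A_i$ lies in the completion is phrased imprecisely: every summand of $A_i$ has \emph{total} degree zero, so no graded component ``moves off to infinity''; the correct statement is that the degree of the second tensor factor tends to $+\infty$ with $s$, which is the defining condition for membership in the standard degreewise completion of $\otimes^2 Y_{\hbar,\ve}(\widehat{\mathfrak{sl}}(n))$. Second, note that $\Delta^n(\widetilde H_{i,1})$ is only prescribed for $1\leq i\leq n-1$, consistent with Remark~\ref{remark}: the algebra is generated by $X^{\pm}_{j,0}$ ($0\leq j\leq n-1$) and $H_{i,1}$ ($1\leq i\leq n-1$), so no formula for $A_0$ is needed; your generation argument covers this, but it is worth making explicit that the affine node $j=0$ in degree one is reached only through \eqref{Eq2.6}--\eqref{Eq2.7} applied with $i=n-1$ (or $i=1$), which is again where the node-independence check is unavoidable.
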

The homomorphism $\Delta^n$ is said to be the coproduct for the affine Yangian since $\Delta^n$ satisfies the coassociativity.
\begin{Remark}\label{remark}
By \eqref{Eq2.2}, \eqref{Eq2.3}, \eqref{Eq2.5} and \eqref{Eq2.7}, we find that the affine Yangian $Y_{\hbar,\ve}(\widehat{\mathfrak{sl}}(n))$ is generated by $X^\pm_{i,0}$ for $0\leq i\leq n-1$ and $H_{j,1}$ for $1\leq j\leq n-1$. Thus, the homomorphism is defined if the image of these elements are determined. In this article, we will define homomorphisms by giving the image of these elements.
\end{Remark}
\section{Evaluation map for the affine Yangian}
Guay \cite{Gu1} gave the evaluation map for the affine Yangian and Kodera \cite{K2} showed the surjectivity of the evaluation map.
The evaluation map for the affine Yangian is a non-trivial homomorphism from the affine Yangian $Y_{\hbar,\ve}(\widehat{\mathfrak{sl}}(n))$ to the completion of the universal enveloping algebra of the affinization of $\mathfrak{gl}(n)$.
We take the grading of $U(\widehat{\mathfrak{gl}}(n))/U(\widehat{\mathfrak{gl}}(n))(z-1)$ as $\text{deg}(Xt^s)=s$ and $\text{deg}(\tilde{c})=0$. We denote the degreewise completion of $U(\widehat{\mathfrak{gl}}(n))/U(\widehat{\mathfrak{gl}}(n))(z-1)$ by $\mathcal{U}(\widehat{\mathfrak{gl}}(n))$.
\begin{Theorem}[Theorem 3.8 in \cite{K1} and Theorem 4.18 in \cite{K2}]\label{thm:main}
\begin{enumerate}
\item
Suppose that $\hbar\neq0$ and $\tilde{c} =\dfrac{\ve}{\hbar}$.
For $a\in\mathbb{C}$, there exists an algebra homomorphism 
\begin{equation*}
\ev_{\hbar,\ve}^{n,a} \colon Y_{\hbar,\ve}(\widehat{\mathfrak{sl}}(n)) \to \mathcal{U}(\widehat{\mathfrak{gl}}(n))
\end{equation*}
uniquely determined by 
\begin{gather*}
	\ev_{\hbar,\ve}^{n,a}(X_{i,0}^{+}) = \begin{cases}
E_{n,1}t&\text{ if }i=0,\\
E_{i,i+1}&\text{ if }1\leq i\leq n-1,
\end{cases} \ev_{\hbar,\ve}^{n,a}(X_{i,0}^{-}) = \begin{cases}
E_{1,n}t^{-1}&\text{ if }i=0,\\
E_{i+1,i}&\text{ if }1\leq i\leq n-1,
\end{cases}
\end{gather*}
and
\begin{align*}
\ev_{\hbar,\ve}^{n,a}(H_{i,1}) &=(a-\dfrac{i}{2}\hbar) \ev_{\hbar,\ve}^{n,a}(H_{i,0}) -\hbar E_{i,i}E_{i+1,i+1} \\
&\quad+ \hbar \displaystyle\sum_{s \geq 0}  \limits\displaystyle\sum_{k=1}^{i}\limits  E_{i,k}t^{-s}E_{k,i}t^s+\hbar \displaystyle\sum_{s \geq 0} \limits\displaystyle\sum_{k=i+1}^{n}\limits  E_{i,k}t^{-s-1}E_{k,i}t^{s+1}\\
&\quad-\hbar\displaystyle\sum_{s \geq 0}\limits\displaystyle\sum_{k=1}^{i}\limits E_{i+1,k}t^{-s} E_{k,i+1}t^{s}-\hbar\displaystyle\sum_{s \geq 0}\limits\displaystyle\sum_{k=i+1}^{n} \limits E_{i+1,k}t^{-s-1} E_{k,i+1}t^{s+1}
\end{align*}
for $i\neq0$.

\item In the case that $\ve\neq0$, the image of $\ev_{\hbar,\ve}^{n,a}$ is dense in the target.
\end{enumerate}
\end{Theorem}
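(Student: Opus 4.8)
The plan is to establish (1) by verifying the defining relations of the affine Yangian on a generating set, and (2) by a filtration argument, in the spirit of \cite{K1} and \cite{K2}.

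For (1), by Remark~\ref{remark} it is enough to prescribe $\ev_{\hbar,\ve}^{n,a}$ on $X^\pm_{i,0}$ $(0\le i\le n-1)$ and on $H_{j,1}$ $(1\le j\le n-1)$ as in the statement, to read off $X^\pm_{j,1}$ from \eqref{Eq2.5}--\eqref{Eq2.7}, and then to check that \eqref{Eq2.1}--\eqref{Eq2.10} are respected. First I would dispose of the relations not involving any $H_{j,1}$: the images $E_{i,i+1},E_{i+1,i},E_{n,1}t,E_{1,n}t^{-1}$ together with the Cartan elements span a copy of $\widehat{\mathfrak{sl}}(n)$ inside $\widehat{\mathfrak{gl}}(n)$ whose central element acts by $\tilde c=\ve/\hbar$, so \eqref{Eq2.1} with $r=s=0$, \eqref{Eq2.2}, \eqref{Eq2.4} and the Serre relations \eqref{Eq2.10} are exactly the Chevalley--Serre relations of that subalgebra and hold for free. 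The substance is the computation of $[\ev_{\hbar,\ve}^{n,a}(H_{i,1}),\ev_{\hbar,\ve}^{n,a}(X^\pm_{j,0})]$ and a few higher brackets in $\mathcal{U}(\widehat{\mathfrak{gl}}(n))$. Here I would split $\ev_{\hbar,\ve}^{n,a}(H_{i,1})$ into its linear part $(a-\tfrac{i}{2}\hbar)\ev_{\hbar,\ve}^{n,a}(H_{i,0})$, the finite correction $-\hbar E_{i,i}E_{i+1,i+1}$, and the four infinite ``current'' double sums, and use the observation that the commutator of such a double sum with a single loop generator telescopes, collapsing to finitely many surviving terms. Executing this should reproduce \eqref{Eq2.5}--\eqref{Eq2.7}; the extra $\ve+\tfrac{n}{2}\hbar$ in the $(0,n-1)$ and $(n-1,0)$ cases arises because bracketing with $E_{n,1}t$ or $E_{1,n}t^{-1}$ reindexes a summation range by one unit and releases the central term $\tilde c=\ve/\hbar$. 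Then \eqref{Eq2.3} comes out by bracketing the resulting $X^\pm_{i,1}$ against $X^\mp_{j,0}$ and invoking \eqref{Eq2.2}, while \eqref{Eq2.8}, \eqref{Eq2.9} and the $r$ or $s$ equal to $1$ instances of \eqref{Eq2.1} follow from one more round of the same telescoping bookkeeping; uniqueness is immediate from Remark~\ref{remark}.

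The hard part of (1) is combinatorial rather than conceptual: handling the four infinite sums and the several boundary corrections simultaneously, and confirming that in every relation the parameter $a$ drops out --- which is exactly what allows $a$ to range over all of $\mathbb{C}$. I would isolate that cancellation as a lemma, ideally by presenting $\{\ev_{\hbar,\ve}^{n,a}\}_a$ as $\ev_{\hbar,\ve}^{n,0}$ precomposed with the shift $H_{i,1}\mapsto H_{i,1}+aH_{i,0}$ on the Yangian side if that assignment is an algebra map here, and otherwise tracking the $a$-cancellation through each relation directly.

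For (2), I would filter $\mathcal{U}(\widehat{\mathfrak{gl}}(n))$ by PBW degree, note that the image already contains all of $U(\widehat{\mathfrak{sl}}(n))$ --- hence every loop generator except those spanning the complementary abelian part $\{(\sum_p E_{p,p})t^{m}\}$ --- and argue by induction on the filtration degree that iterated commutators of the elements $\ev_{\hbar,\ve}^{n,a}(H_{i,1})$ with loop generators produce, modulo lower degree, exactly those missing generators, after which the completed image is everything. The hypothesis $\ve\neq0$ (equivalently $\tilde c\neq0$) enters precisely at this inductive step, guaranteeing that the coefficients governing the extraction of those generators do not vanish; at $\ve=0$ the argument degenerates and the image is strictly smaller. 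Since the target is a degreewise completion, reaching these generators in the filtered sense gives density rather than surjectivity.
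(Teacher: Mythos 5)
The paper does not actually prove this theorem---it is imported verbatim from Guay and Kodera (Theorem 3.8 in \cite{K1} and Theorem 4.18 in \cite{K2})---so there is no internal proof to compare against, only the citation. Your outline (checking the minimalistic relations \eqref{Eq2.1}--\eqref{Eq2.10} on the generators singled out in Remark~\ref{remark} via telescoping of the four infinite current sums, absorbing the parameter $a$ into the shift automorphism $H_{i,1}\mapsto H_{i,1}+aH_{i,0}$, $X^{\pm}_{i,1}\mapsto X^{\pm}_{i,1}+aX^{\pm}_{i,0}$ of $Y_{\hbar,\ve}(\widehat{\mathfrak{sl}}(n))$, and obtaining density for $\tilde{c}=\ve/\hbar\neq0$ by extracting the Heisenberg generators $(\sum_{p}E_{p,p})t^{m}$ from commutators with the image, where the nonvanishing of the level is exactly what makes the relevant coefficient invertible) is faithful to how those references argue, and I see no gap in it.
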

\section{Tensor product of the affine Yangians}
In this section, we will construct a homomorphism from tensor product of affine Yangians to another tensor of affine Yangians.
\begin{Theorem}[Theorem 3.1 in \cite{U8} and Theorem 3.1 in \cite{U10}]
\begin{enumerate}
    \item For $m\geq 1$ and $n\geq 3$, there exists a homomorphism
    \begin{gather*}
\Psi_1^{n,m+n}\colon Y_{\hbar,\ve}(\widehat{\mathfrak{sl}}(n))\to \widetilde{Y}_{\hbar,\ve}(\widehat{\mathfrak{sl}}(m+n))
\end{gather*}
given by
\begin{gather*}
\Psi_1^{n,m+n}(X^+_{i,0})=\begin{cases}
E_{n,1}t&\text{ if }i=0,\\
E_{i,i+1}&\text{ if }i\neq 0,
\end{cases}\ 
\Psi_1^{n,m+n}(X^-_{i,0})=\begin{cases}
E_{1,n}t^{-1}&\text{ if }i=0,\\
E_{i+1,i}&\text{ if }i\neq 0,
\end{cases}
\end{gather*}
and
\begin{align*}
\Psi_1^{n,m+n}(H_{i,1})&= H_{i,1}-\hbar\displaystyle\sum_{s \geq 0} \limits\sum_{k=n+1}^{m+n}\limits E_{i,k}t^{-s-1} E_{k,i}t^{s+1}+\hbar\displaystyle\sum_{s \geq 0}\limits \sum_{k=n+1}^{m+n}\limits E_{i+1,k}t^{-s-1} E_{k,i+1}t^{s+1}
\end{align*}
for $i\neq0$. 
    \item For $m\geq 3$ and $n\geq 1$, there exists a homomorphism
    \begin{gather*}
\Psi_2^{m,m+n}\colon Y_{\hbar,\ve+n\hbar}(\widehat{\mathfrak{sl}}(m))\to \widetilde{Y}_{\hbar,\ve}(\widehat{\mathfrak{sl}}(m+n))
\end{gather*}
determined by
\begin{gather*}
\Psi_2^{m,m+n}(X^+_{i,0})=\begin{cases}
E_{m+n,n+1}t&\text{ if }i=0,\\
E_{n+i,n+i+1}&\text{ if }i\neq 0,
\end{cases}\ 
\Psi_2^{m,m+n}(X^-_{i,0})=\begin{cases}
E_{n+1,m+n}t^{-1}&\text{ if }i=0,\\
E_{n+i+1,n+i}&\text{ if }i\neq 0,
\end{cases}
\end{gather*}
and
\begin{align*}
\Psi_2^{m,m+n}(H_{i,1})&= H_{i+n,1}+\hbar\displaystyle\sum_{s \geq 0}\limits\sum_{k=1}^n E_{k,n+i}t^{-s-1}E_{n+i,k}t^{s+1}\\
&\quad-\hbar\displaystyle\sum_{s \geq 0}\limits\sum_{k=1}^n E_{k,n+i+1}t^{-s-1} E_{n+i+1,k}t^{s+1}
\end{align*}
for $i\neq0$.
    \item For $m,n\geq 3$, two homomorphisms $\Psi_1$ and $\Psi_2$ induce the homomorphism
    \begin{equation*}
    \Psi_1^{n,m+n}\otimes\Psi_2^{m,m+n}\colon Y_{\hbar,\ve}(\widehat{\mathfrak{sl}}(n))\otimes Y_{\hbar,\ve+n\hbar}(\widehat{\mathfrak{sl}}(m))\to \widetilde{Y}_{\hbar,\ve}(\widehat{\mathfrak{sl}}(m+n)).
    \end{equation*}
\end{enumerate}
\end{Theorem}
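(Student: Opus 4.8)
The plan is to verify directly that the prescribed assignments respect all the defining relations \eqref{Eq2.1}--\eqref{Eq2.10} of the source affine Yangians. By Remark~\ref{remark} it is enough to work with the generators $X^\pm_{i,0}$ and $H_{j,1}$, and once their images are fixed the images of the $X^\pm_{i,1}$ are forced (each $X^\pm_{j,1}$ is, up to sign and a node-$0$ correction, a bracket of some $\widetilde H_{i,1}$ with an $X^\pm_{j,0}$), so that relations \eqref{Eq2.3}, \eqref{Eq2.8}, \eqref{Eq2.9} also become statements one can check. First I would dispose of the relations that only see the ``level-zero'' part: the elements $\Psi_1^{n,m+n}(X^\pm_{i,0})$ are the root vectors $E_{i,i+1},E_{i+1,i}$ ($1\le i\le n-1$) together with $E_{n,1}t,E_{1,n}t^{-1}$, and $\Psi_1^{n,m+n}(H_{i,0})=[\Psi_1^{n,m+n}(X^+_{i,0}),\Psi_1^{n,m+n}(X^-_{i,0})]$ by \eqref{Eq2.2}; a short computation shows that these span a subalgebra of $U(\widehat{\mathfrak{sl}}(m+n))$ isomorphic to $\widehat{\mathfrak{sl}}(n)$, and hence satisfy \eqref{Eq2.1}--\eqref{Eq2.4} and \eqref{Eq2.10} at once. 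The same applies to $\Psi_2^{m,m+n}$, where the only differences are that the relevant index block is $\{n+1,\dots,m+n\}$ and the parameter $\ve$ is shifted by $n\hbar$.

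The substance is then in \eqref{Eq2.5}--\eqref{Eq2.9}, i.e.\ in controlling the brackets of the corrected elements $\Psi_\bullet(H_{i,1})$. These differ from the ``naive'' generators $H_{i,1}$ (resp.\ $H_{i+n,1}$) of the target by an infinite sum of quadratic terms supported on the complementary index block; these sums lie in the degreewise completion $\widetilde Y_{\hbar,\ve}(\widehat{\mathfrak{sl}}(m+n))$, which is exactly why the target has to be completed. For each of \eqref{Eq2.5}--\eqref{Eq2.9} I would expand the commutator, apply the known relations of $Y_{\hbar,\ve}(\widehat{\mathfrak{sl}}(m+n))$ (in particular \eqref{Eq2.5}--\eqref{Eq2.7} used in the big algebra, together with the commutation relations of the embedded $\widehat{\mathfrak{gl}}$-part), and check that the correction terms telescope so that only the required right-hand side survives.

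I expect this bookkeeping to be the main obstacle, for two reasons. The cancellation of the infinite quadratic sums is delicate and has to be organised carefully; and the affine node requires separate treatment, since $X^+_{0,0}$ is sent to the non-simple real root vector $E_{n,1}t$ (resp.\ $E_{m+n,n+1}t$), so that the big algebra's relations at its node $0$ do not apply verbatim, and the constant $\ve+\tfrac{n}{2}\hbar$ appearing in \eqref{Eq2.6}--\eqref{Eq2.9} of the source must be recovered from computations in $Y_{\hbar,\ve}(\widehat{\mathfrak{sl}}(m+n))$, whose node-$0$ relations instead carry $\ve+\tfrac{m+n}{2}\hbar$. One useful organising observation is that the correction terms in $\Psi_1^{n,m+n}(H_{i,1})$ and in $\Psi_2^{m,m+n}(H_{i,1})$ are precisely the two ``halves'' of the expression $A_i$ occurring in the coproduct $\Delta^{m+n}$, which both explains their shape and streamlines the cancellations.

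Finally, for part~(3): a homomorphism out of $Y_{\hbar,\ve}(\widehat{\mathfrak{sl}}(n))\otimes Y_{\hbar,\ve+n\hbar}(\widehat{\mathfrak{sl}}(m))$ is the same datum as the two homomorphisms $\Psi_1^{n,m+n},\Psi_2^{m,m+n}$ together with the vanishing of every commutator $[\Psi_1^{n,m+n}(a),\Psi_2^{m,m+n}(b)]$ in $\widetilde Y_{\hbar,\ve}(\widehat{\mathfrak{sl}}(m+n))$. By Remark~\ref{remark} it suffices to take $a,b$ among the generators $X^\pm_{\bullet,0}$ and $H_{\bullet,1}$. The commutators of $X$-type images vanish because the root vectors $E_{i,i+1},E_{n,1}t$ (indices in $\{1,\dots,n\}$) and $E_{n+j,n+j+1},E_{m+n,n+1}t$ (indices in $\{n+1,\dots,m+n\}$) have disjoint supports; the mixed commutators involving an $H_{i,1}$-image reduce, after noting that the naive parts commute by \eqref{Eq2.1} and by disjointness of supports (so that the relevant entry $a_{i,n+j}$ of the Cartan matrix of $\widehat{\mathfrak{sl}}(m+n)$ vanishes and \eqref{Eq2.5} kills the level-$1$ contribution), to checking that the quadratic correction terms of $\Psi_1^{n,m+n}(H_{i,1})$ annihilate those of $\Psi_2^{m,m+n}$ — once again a telescoping-sum computation of the same flavour as above.
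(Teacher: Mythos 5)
First, a remark on the comparison you asked for: this paper does not prove the statement at all---it is imported verbatim from Theorem~3.1 of \cite{U8} and Theorem~3.1 of \cite{U10}---so there is no in-paper argument to measure you against. Your overall plan (verify the minimalistic relations \eqref{Eq2.1}--\eqref{Eq2.10} on the generators singled out in Remark~\ref{remark}, dispose of the level-zero relations via the block embedding $\widehat{\mathfrak{sl}}(n)\hookrightarrow\widehat{\mathfrak{sl}}(m+n)$, and concentrate the work in \eqref{Eq2.5}--\eqref{Eq2.9} and at the affine node where $X^+_{0,0}\mapsto E_{n,1}t$ is not a Chevalley generator of the target and the constant $\ve+\frac{n}{2}\hbar$ must be produced by the correction terms) is the right strategy for parts (1) and (2) and is essentially how the cited references proceed. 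Two small imprecisions there: \eqref{Eq2.1} and \eqref{Eq2.3} are not consequences of the level-zero subalgebra alone, since they involve the corrected $H_{i,1}$; and the correction sums are not literally ``halves of $A_i$''---$A_i$ lives in a tensor square, whereas the corrections are ordered products inside a single copy of the Yangian---so that observation is a useful heuristic, not an identity one can compute with.

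The genuine gap is in your treatment of part (3). You reduce commutativity of the two images to ``the naive parts commute by disjointness of supports and \eqref{Eq2.5}'' plus ``the correction terms of $\Psi_1(H_{i,1})$ annihilate those of $\Psi_2$''. This decomposition fails at the affine nodes. Take $[\Psi_1^{n,m+n}(H_{i,1}),\Psi_2^{m,m+n}(X^+_{0,0})]$ with $\Psi_2^{m,m+n}(X^+_{0,0})=E_{m+n,n+1}t$: a telescoping computation gives
\[
\Bigl[-\hbar\sum_{s\ge0}\sum_{k=n+1}^{m+n}E_{i,k}t^{-s-1}E_{k,i}t^{s+1},\ E_{m+n,n+1}t\Bigr]=-\hbar\,E_{i,n+1}E_{m+n,i}\,t\neq0,
\]
so the correction part alone does not commute with $\Psi_2(X^+_{0,0})$; correspondingly $[\widetilde{H}_{i,1},E_{m+n,n+1}t]$ is \emph{not} killed by \eqref{Eq2.5}, because $E_{m+n,n+1}t$ is a non-simple real root vector: orthogonality of its root to $\alpha_i$ only forces $[H_{i,0},E_{m+n,n+1}t]=0$, while the level-one bracket picks up $\frac{\hbar}{2}\{\cdot,\cdot\}$-type contributions from \eqref{Eq2.8}--\eqref{Eq2.9}. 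The proof must exhibit the cancellation \emph{between} the naive bracket and the correction bracket, and the same issue occurs for $[\Psi_1(X^\pm_{0,0}),\Psi_2(H_{j,1})]$ and for the cross terms $[H_{i,1},\mathrm{corr}_2]+[\mathrm{corr}_1,H_{n+j,1}]$ in the $H$--$H$ commutators, which your reduction silently drops. The gap is reparable---it is the same kind of computation you already plan for parts (1) and (2)---but the commutativity check as you state it would come out false term by term.
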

Suppose that $q_s-q_{s+1}=0\text{ or }\geq3$.
Let us define the following algebra
\begin{align*}
Y^{n_1,n_2,\cdots,n_v}_{\hbar,\ve}=\bigotimes_{1\leq s\leq v}Y_{\hbar,\ve_s}(\widehat{\mathfrak{sl}}(n_s-n_{s+1})),
\end{align*}
where we set $\ve_s=\ve-(n_s-n_v)\hbar$. We denote $x\in Y_{\hbar,\ve_s}(\widehat{\mathfrak{sl}}(n_s-n_{s+1}))\subset Y^{n_1,n_2,\cdots,n_v}$ by $x^{(s)}$. Then, we can construct two homomorphisms $\Delta_A^{n_1,\cdots,n_{v}}$ and $\Delta_B^{n_1,\cdots,n_{v}}$:
\begin{equation*}
\Delta_A^{n_1,\cdots,n_{v}}\colon Y^{n_1,n_2,\cdots,n_v}\to Y^{n_1,n_2,\cdots,n_v}\widehat{\otimes} Y_{\hbar,\ve}(\widehat{\mathfrak{sl}}(n_v))
\end{equation*}
determined by
\begin{align*}
\Delta_A^{n_1,\cdots,n_{v}}(x^{(r)})=\begin{cases}
x^{(r)}\otimes 1&\text{ if }r\neq v,\\
1^{\otimes(v-1)}\otimes\Delta^{n_v}(x)&\text{ if }r=v
\end{cases}
\end{align*}
and
\begin{equation*}
\Delta_B^{n_1,\cdots,n_{v}}\colon Y^{n_1,n_2,\cdots,n_l}\to Y^{n_1,n_2,\cdots,n_{v-2},n_{v-1}}\widehat{\otimes} Y_{\hbar,\ve}(\widehat{\mathfrak{sl}}(n_v))
\end{equation*}
determined by
\begin{align*}
\Delta_B^{n_1,\cdots,n_{v}}(x^{(r)})=\begin{cases}
x^{(r)}\otimes 1&\text{ if }r\neq v-2,\\
1^{\otimes(v-2)}\otimes(\Psi^{n_{v-1}-n_v,n_{v-1}}_1(x))\otimes 1&\text{ if }r=v-1,\\
1^{\otimes(v-2)}\otimes((\Psi_2^{n_v,n_{v-1}}\otimes1)\circ\Delta)(x)&\text{ if }r=v,
\end{cases}
\end{align*}
where $S\widehat{\otimes}U$ means the standard degreewise completion of the tensor product $S\otimes U$.

We set a homomorphism
\begin{align*}
\Delta_l&=\prod_{s=1}^{l-1}(\Delta_s\otimes\id^{l-s-1})\colon Y^{n_1,n_2,\cdots,n_v}_{\hbar,\ve}\to\widehat{\bigotimes}_{1\leq s\leq l}Y_{\hbar,\ve-(n_s-n_v)\hbar}(\widehat{\mathfrak{sl}}(q_s)),
\end{align*}
where $\widehat{\bigotimes}_{1\leq s\leq l}Y_{\hbar,\ve-(n_s-n_v)\hbar}(\widehat{\mathfrak{sl}}(q_s))$ is the standard degreewise completion of the tensor of affine Yangians and
\begin{align*}
\Delta_s&=\begin{cases}
\Delta_B^{n_1,\cdots,n_{a}}&\text{ if }n_{a-1}=q_{s-1}>q_s=n_a,\\
\Delta_A^{n_1,\cdots,n_{a}}&\text{ if }q_{s-1}=q_s=n_a.
\end{cases}
\end{align*}
By the definition of $\Psi^{n,m+n}$ and $\Delta^n$, we obtain
\begin{align*}
\Delta_l(H_{i,1})^{(a)}&=\sum_{1\leq s\leq b_a}H^{(s)}_{i+q_s-n_a}+B_i+C_i+D_i,
\end{align*}
where we set $H^{(s)}_{i+q_s-n_a}=1^{\otimes s-1}\otimes H_{i+q_s-n_a}\otimes 1^{\otimes l-s}$, $E^{(s)}_{q_s-n_a+i,q_s-n_a+j}t^s=1^{\otimes s-1}\otimes E_{i,j}\otimes 1^{\otimes l-s}$ and
\begin{align*}
B_i&=-\hbar\sum_{1\leq r_1<r_2\leq b_a}\limits E^{(r_1)}_{q_{r_1}-n_a+i,q_{r_1}-n_a+i}E^{(r_2)}_{q_{r_2}-n_a+i+1,q_{r_2}-n_a+i+1}\\
&\quad-\hbar\sum_{1\leq r_1<r_2\leq b_a}\limits E^{(r_1)}_{q_{r_1}-n_a+i+1,q_{r_1}-n_a+i+1}E^{(r_2)}_{q_{r_2}-n_a+i,q_{r_2}-n_a+i}\\
&\quad-\hbar\sum_{1\leq r_1<r_2\leq b_a}\limits\displaystyle\sum_{s \geq 0}  \limits\displaystyle\sum_{u=1}^{q_{r_2}-n_a+i}\limits E^{(r_1)}_{q_{r_1}-q_{r_2}+u,q_{r_1}-n_a+i}t^{-s-1}E^{(r_2)}_{q_{r_2}-n_a+i,u}t^{s+1}\\
&\quad+\hbar\sum_{1\leq r_1<r_2\leq b_a}\limits\displaystyle\sum_{s \geq 0}  \limits\displaystyle\sum_{u=1}^{q_{r_2}-n_a+i}\limits E^{(r_1)}_{q_{r_1}-n_a+i,q_{r_1}-q_{r_2}+u}t^{-s}E^{(r_2)}_{u,q_{r_2}-n_a+i}t^s\\
&\quad-\hbar\sum_{1\leq r_1<r_2\leq b_a}\limits\displaystyle\sum_{s \geq 0} \limits\displaystyle\sum_{u=q_{r_2}-n_a+i+1}^{q_{r_2}}\limits E^{(r_1)}_{q_{r_1}-q_{r_2}+u,q_{r_1}-n_a+i}t^{-s}E^{(r_2)}_{q_{r_2}-n_a+i,u}t^{s}\\
&\quad+\hbar\sum_{1\leq r_1<r_2\leq b_a}\limits\displaystyle\sum_{s \geq 0} \limits\displaystyle\sum_{u=q_{r_2}-n_a+i+1}^{q_{r_2}}\limits E^{(r_1)}_{q_{r_1}-n_a+i,q_{r_1}-q_{r_2}+u}t^{-s-1}E^{(r_2)}_{u,q_{r_2}-n_a+i}t^{s+1}\\
&\quad+\hbar\sum_{1\leq r_1<r_2\leq b_a}\limits\displaystyle\sum_{s \geq 0}\limits\displaystyle\sum_{u=1}^{q_{r_2}-n_a+i}\limits E^{(r_1)}_{q_{r_1}-q_{r_2}+u,q_{r_1}-n_a+i+1}t^{-s-1}E^{(r_2)}_{q_{r_2}-n_a+i+1,u}t^{s+1}\\
&\quad-\hbar\sum_{1\leq r_1<r_2\leq b_a}\limits\displaystyle\sum_{s \geq 0}\limits\displaystyle\sum_{u=1}^{q_{r_2}-n_a+i}\limits E^{(r_1)}_{q_{r_1}-n_a+i+1,q_{r_1}-q_{r_2}+u}t^{-s}E^{(r_2)}_{u,q_{r_2}-n_a+i+1}t^s\\
&\quad+\hbar\sum_{1\leq r_1<r_2\leq b_a}\limits\displaystyle\sum_{s \geq 0}\limits\displaystyle\sum_{u=q_{r_2}-n_a+i+1}^{q_{r_2}} \limits E^{(r_1)}_{q_{r_1}-q_{r_2}+u,q_{r_1}-q_{l}+i+1}t^{-s} E^{(r_2)}_{q_{r_2}-n_a+i+1,u}t^{s}\\
&\quad-\hbar\sum_{1\leq r_1<r_2\leq b_a}\limits\displaystyle\sum_{s \geq 0}\limits\displaystyle\sum_{u=q_{r_2}-n_a+i+1}^{q_{r_2}} \limits E^{(r_1)}_{q_{r_1}-n_a+i+1,q_{r_1}-q_{r_2}+u}t^{-s-1}E^{(r_2)}_{u,q_{r_2}-n_a+i+1}t^{s+1},\\
C_i&=\hbar\sum_{r=1}^{b_a}\displaystyle\sum_{s \geq 0}\limits\sum_{u=1}^{q_r-n_a} E^{(r)}_{u,q_r-n_a+i}t^{-s-1}E^{(r)}_{q_r-n_a+i,u}t^{s+1}\\
&\quad-\hbar\sum_{r=1}^{b_a}\displaystyle\sum_{s \geq 0}\limits\sum_{u=1}^{q_r-n_a} E^{(r)}_{u,q_r-n_a+i+1}t^{-s-1} E^{(r)}_{q_r-n_a+i+1,u}t^{s+1}\\
&\quad+\hbar\displaystyle\sum_{1\leq r_1<r_2\leq b_a}\sum_{s \geq 0}\limits\sum_{u=1}^{q_{r_2}-n_a} E^{(r_1)}_{q_{r_1}-q_{r_2}+u,q_{r_1}-n_a+i}t^{-s-1}E^{(r_2)}_{q_{r_2}-n_a+i,u}t^{s+1}\\
&\quad-\hbar\sum_{1\leq r_1<r_2\leq b_a}\displaystyle\sum_{s \geq 0}\limits\sum_{u=1}^{q_{r_2}-n_a} E^{(r_1)}_{q_{r_1}-q_{r_2}+u,q_{r_1}-n_a+i+1}t^{-s-1} E^{(r_2)}_{q_{r_2}-n_a+i+1,u}t^{s+1}\\
&\quad+\hbar\sum_{1\leq r_1<r_2\leq b_a}\displaystyle\sum_{s \geq 0}\limits\sum_{u=1}^{q_{r_2}-n_a} E^{(r_2)}_{u,q_{r_2}-n_a+i}t^{-s-1} E^{(r_1)}_{q_{r_1}-n_a+i,q_{r_1}-q_{r_2}+u}t^{s+1}\\
&\quad-\hbar\sum_{1\leq r_1<r_2\leq b_a}\displaystyle\sum_{s \geq 0}\limits\sum_{u=1}^{q_{r_2}-n_a} E^{(r_2)}_{u,q_{r_2}-n_a+i+1}t^{-s-1} E^{(r_1)}_{q_{r_1}-n_a+i+1,q_{r_1}-q_{r_2}+u}t^{s+1},\\
D_i&=-\hbar\sum_{r=1}^{b_a}\displaystyle\sum_{s \geq 0}\limits\sum_{u=q_r-n_{a+1}+1}^{q_r} E^{(r)}_{q_r-n_a+i,u}t^{-s-1}E^{(r)}_{u,q_r-n_a+i}t^{s+1}\\
&\quad+\hbar\sum_{r=1}^{b_a}\displaystyle\sum_{s \geq 0}\limits\sum_{u=q_r-n_{a+1}+1}^{q_r} E^{(r)}_{q_r-n_a+i+1,u}t^{-s-1} E^{(r)}_{u,q_r-n_a+i+1}t^{s+1}\\
&\quad-\hbar\displaystyle\sum_{1\leq r_1<r_2\leq b_a}\sum_{s \geq 0}\limits\sum_{u=q_{r_2}-n_{a+1}+1}^{q_{r_2}} E^{(r_1)}_{q_{r_1}-n_a+i,q_{r_1}-q_{r_2}+u}t^{-s-1}E^{(r_2)}_{u,q_{r_2}-n_a+i}t^{s+1}\\
&\quad+\hbar\sum_{1\leq r_1<r_2\leq b_a}\displaystyle\sum_{s \geq 0}\limits\sum_{u=q_{r_2}-n_{a+1}+1}^{q_{r_2}} E^{(r_1)}_{q_{r_1}-n_a+i+1,q_{r_1}-q_{r_2}+u}t^{-s-1}E^{(r_2)}_{u,q_{r_2}-n_a+i+1}t^{s+1}\\
&\quad-\hbar\displaystyle\sum_{1\leq r_1<r_2\leq b_a}\sum_{s \geq 0}\limits\sum_{u=q_{r_2}-n_{a+1}+1}^{q_{r_2}} E^{(r_2)}_{q_{r_2}-n_a+i,u}t^{-s-1}E^{(r_1)}_{q_{r_1}-q_{r_2}+u,q_{r_1}-n_a+i}t^{s+1}\\
&\quad+\hbar\sum_{1\leq r_1<r_2\leq b_a}\displaystyle\sum_{s \geq 0}\limits\sum_{u=q_{r_2}-n_{a+1}+1}^{q_{r_2}} E^{(r_2)}_{q_{r_2}-n_a+i+1,u}t^{-s-1}E^{(r_1)}_{q_{r_1}-q_{r_2}+u,q_{r_1}-n_a+i+1}t^{s+1}.
\end{align*}
We note that $B_i$, $C_i$ and $D_i$ come from the coproduct for the affine Yangian, the homomorphism $\Psi_2^{m,m+n}$ and the homomorphism $\Psi_1^{n,m+n}$ respectively.
\section{Tensor product of affine Yangians and non-rectangular $W$-algebras}
In this section, we construct a homomorphism from the tensor product of affine Yangians to the universal enveloping algebra of the non-rectangular $W$-algerba.
\begin{Theorem}\label{Main}
Assume that $\ve=\hbar(k+N-q_l)$. Then, there exists an algebra homomorphism 
\begin{equation*}
\Phi\colon Y^{n_1,\cdots,n_v}_{\hbar,\ve}\to \mathcal{U}(\mathcal{W}^{k}(\mathfrak{gl}(N),f))
\end{equation*} 
determined by
\begin{gather*}
\Phi((X^+_{i,0})^{(a)})=\begin{cases}
W^{(1)}_{n_1-n_{a+1},n_1-n_a+1}t&\text{ if }i=0,\\
W^{(1)}_{n_1-n_a+i,n_1-n_a+i+1}&\text{ if }1\leq i\leq n_a-n_{a+1}-1,
\end{cases}\\
\Phi((X^-_{i,0})^{(a)})=\begin{cases}
W^{(1)}_{n_1-n_a+1,n_1-n_{a+1}}t^{-1}&\text{ if }i=0,\\
W^{(1)}_{n_1-n_a+i+1,n_1-n_a+i}&\text{ if }1\leq i\leq n_a-n_{a+1}-1,
\end{cases}
\end{gather*}
and
\begin{align*}
\Phi((H_{i,1})^{(a)})&=
-\hbar(W^{(2)}_{n_1-n_a+i,n_1-n_a+i}t-W^{(2)}_{n_1-n_a+i+1,n_1-n_a+i+1}t)\\
&\quad-\dfrac{i}{2}\hbar(W^{(1)}_{n_1-n_a+i,n_1-n_a+i}-W^{(1)}_{n_1-n_a+i+1,n_1-n_a+i+1})\\
&\quad+\hbar W^{(1)}_{n_1-n_a+i,n_1-n_a+i}W^{(1)}_{n_1-n_a+i+1,n_1-n_a+i+1}\\
&\quad+\hbar\displaystyle\sum_{s \geq 0}  \limits\displaystyle\sum_{u=1}^{i}\limits W^{(1)}_{n_1-n_a+i,n_1-n_a+u}t^{-s}W^{(1)}_{n_1-n_a+u,n_1-n_a+i}t^s\\
&\quad+\hbar\displaystyle\sum_{s \geq 0} \limits\displaystyle\sum_{u=i+1}^{n}\limits W^{(1)}_{n_1-n_a+i,n_1-n_a+u}t^{-s-1} W^{(1)}_{n_1-n_a+u,n_1-n_a+i}t^{s+1}\\
&\quad-\hbar\displaystyle\sum_{s \geq 0}\limits\displaystyle\sum_{u=1}^{i}\limits W^{(1)}_{n_1-n_a+i+1,n_1-n_a+u}t^{-s} W^{(1)}_{n_1-n_a+u,n_1-n_a+i+1}t^s\\
&\quad-\hbar\displaystyle\sum_{s \geq 0}\limits\displaystyle\sum_{u=i+1}^{n} \limits W^{(1)}_{n_1-n_a+i+1,n_1-n_a+u}t^{-s-1} W^{(1)}_{n_1-n_a+u,n_1-n_a+i+1}t^{s+1}
\end{align*}
for $1\leq i\leq n_a-n_{a+1}-1$.
\end{Theorem}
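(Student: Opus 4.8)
The plan is to build $\Phi$ indirectly: as the map induced on $\mathcal{U}(\mathcal{W}^{k}(\mathfrak{gl}(N),f))$ through the Miura embedding $\widetilde{\mu}$ by the composite of $\Delta_l$ with a tensor product of evaluation maps, rather than to verify the affine Yangian relations \eqref{Eq2.1}--\eqref{Eq2.10} directly inside $\mathcal{U}(\mathcal{W}^{k}(\mathfrak{gl}(N),f))$, which would require control of the full OPE of a non-rectangular $W$-algebra. Put $\ve_s=\ve-(q_s-q_l)\hbar$. The hypothesis $\ve=\hbar(k+N-q_l)$ says precisely that $\ve_s/\hbar=k+N-q_s=\alpha_s$ for every $s$, so the evaluation map $\ev_{\hbar,\ve_s}^{q_s,0}\colon Y_{\hbar,\ve_s}(\widehat{\mathfrak{sl}}(q_s))\to\mathcal{U}(\widehat{\mathfrak{gl}}(q_s))$ of Theorem~\ref{thm:main} is available, and the central charge $\alpha_s$ of its target is exactly the one carried by the $s$-th factor of $\widehat{\bigotimes}_{1\leq s\leq l}U(\widehat{\mathfrak{gl}}(q_s))$ via $\kappa_s$. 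Composing with the homomorphism $\Delta_l$ of the previous section produces an algebra homomorphism
\[G:=\Big(\bigotimes_{1\leq s\leq l}\ev_{\hbar,\ve_s}^{q_s,0}\Big)\circ\Delta_l\colon Y^{n_1,\cdots,n_v}_{\hbar,\ve}\longrightarrow\widehat{\bigotimes}_{1\leq s\leq l}U(\widehat{\mathfrak{gl}}(q_s)),\]
since $\Delta_l$ and the $\ev_{\hbar,\ve_s}^{q_s,0}$ are homomorphisms, all compatible with the degreewise completions and with tensoring.

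By Remark~\ref{remark} applied to each tensor factor, $Y^{n_1,\cdots,n_v}_{\hbar,\ve}$ is generated by the $(X^{\pm}_{i,0})^{(a)}$ and $(H_{i,1})^{(a)}$, so it suffices to show that $G$ sends each of these to $\widetilde{\mu}$ of the element prescribed in the statement. Then $G(Y^{n_1,\cdots,n_v}_{\hbar,\ve})$ lies in the subalgebra $\widetilde{\mu}(\mathcal{U}(\mathcal{W}^{k}(\mathfrak{gl}(N),f)))$, and since $\widetilde{\mu}$ is injective the composite $\Phi:=\widetilde{\mu}^{-1}\circ G$ is a well-defined algebra homomorphism taking the stated values, and automatically $\widetilde{\mu}\circ\Phi=G$, which is also Theorem~\ref{A}. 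For $(X^{\pm}_{i,0})^{(a)}$ this is short: each of $\Delta^n$, $\Psi_1^{n,m+n}$, $\Psi_2^{m,m+n}$ sends $X^{\pm}_{i,0}$ to a sum of Chevalley-type matrix units, so $\Delta_l$ sends $(X^{\pm}_{i,0})^{(a)}$ to $\sum_{r=1}^{b_a}E^{(r)}_{p,q}t^{\epsilon}$ for the appropriate $p,q,\epsilon$; the evaluation maps fix these; and by Theorem~\ref{Generators} (using that $E^{(r)}_{p,q}$ vanishes for $r>b_a$ when $p,q$ lie in the $a$-th block) this equals $\widetilde{\mu}(W^{(1)}_{p,q}t^{\epsilon})$, as wanted.

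The remaining, and main, case is $(H_{i,1})^{(a)}$. Here I would start from the explicit expansion
\[\Delta_l(H_{i,1})^{(a)}=\sum_{1\leq s\leq b_a}H^{(s)}_{i+q_s-n_a}+B_i+C_i+D_i\]
computed at the end of the previous section, apply $\bigotimes_s\ev_{\hbar,\ve_s}^{q_s,0}$ — which replaces each $H^{(s)}_{i+q_s-n_a}$ by its image under the formula of Theorem~\ref{thm:main}(1) at parameter $0$, a sum of quadratic expressions in matrix units each supported in the single factor $s$, and carries the terms $B_i,C_i,D_i$, already expressed through the completed affine Lie algebras of the factors, to the corresponding elements — and then compare with $\widetilde{\mu}$ of the combination of $W^{(1)}$'s and $W^{(2)}$'s in the statement, whose Miura images are read off termwise from Theorem~\ref{Generators}. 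After expanding $\widetilde{\mu}(W^{(1)}_{a,b})=\sum_{r=1}^{b_a}E^{(r)}_{a,b}[-1]$ and $\widetilde{\mu}(W^{(2)}_{a,b})$, every product $\widetilde{\mu}(W^{(1)})\widetilde{\mu}(W^{(1)})$ splits into a single-factor part $\sum_{r}E^{(r)}E^{(r)}$ and a cross-factor part $\sum_{r_1\neq r_2}E^{(r_1)}E^{(r_2)}$.

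The heart of the proof, and the step I expect to be the main obstacle, is matching these two expansions for $(H_{i,1})^{(a)}$. I would pair the single-factor parts of the $\widetilde{\mu}(W^{(1)})\widetilde{\mu}(W^{(1)})$-products, together with the linear term in $W^{(1)}$ and the $\gamma_r E^{(r)}_{a,b}[-2]$-part of $W^{(2)}$, against the contributions $\ev_{\hbar,\ve_r}^{q_r,0}(H^{(r)}_{i+q_r-n_a})$ — here the identity $\gamma_r=\sum_{u>r}\alpha_u$, the parameter shifts built into $\Psi_2^{m,m+n}$, and the central terms produced on reordering the quadratic sums are what make this close — and I would pair the cross-factor parts, together with the bilinear part of $W^{(2)}$, against $B_i+C_i+D_i$ (recalling that $B_i$, $C_i$, $D_i$ come from the coproduct $\Delta^n$, from $\Psi_2^{m,m+n}$, and from $\Psi_1^{n,m+n}$ respectively). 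Organizing both sides by the relative order of the tensor indices ($r_1<r_2$, $r_1=r_2$, $r_1>r_2$) and by the range of the inner summation index $u$ — the ranges dictated by the shape of $f$, equivalently by $\{n_s\}$ and $\{b_s\}$ — reduces the claim to a finite list of elementary identities between indexed sums; carrying these out and checking that no term is left unmatched is the bulk of the work. Once this is done, injectivity of $\widetilde{\mu}$ yields $\Phi$ and the commuting triangle, and by construction $\Phi$ restricts on the $a$-th tensor factor to the homomorphism $\Phi_a$ of \cite{U9}. Finally, the evaluation maps require $\hbar\neq0$; since the relations at stake depend polynomially on $\hbar$, the case $\hbar=0$ follows by specialization.
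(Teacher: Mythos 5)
Your overall strategy is the same as the paper's: define $\Phi$ as $\widetilde{\mu}^{-1}\circ\bigl(\bigotimes_{s}\ev\bigr)\circ\Delta_l$, reduce to the generators of Remark~\ref{remark}, dispose of $X^{\pm}_{i,0}$ quickly, and carry out the term-by-term matching for $H_{i,1}$. However, there is one concrete error that would make the matching fail: you take the evaluation parameter $a=0$ in $\ev_{\hbar,\ve_s}^{q_s,a}$, whereas it must be taken to be $\gamma_s\hbar$ (up to the index-shift correction $-\tfrac{q_s-n_a}{2}\hbar$ accounting for $i\mapsto i+q_s-n_a$). The reason this matters is the summand $-\sum_{r}\gamma_r E^{(r)}_{p,q}[-2]$ of $W^{(2)}_{p,q}$ in Theorem~\ref{Generators}: after applying $t$ it contributes the linear term $\hbar\sum_{r}\gamma_r\bigl(e^{(r)}_{q_1-n_a+i,q_1-n_a+i}-e^{(r)}_{q_1-n_a+i+1,q_1-n_a+i+1}\bigr)$ to $\widetilde{\mu}\circ\Phi(H_{i,1})$, and on the other side the only source of a matching linear term is the $a$-dependent summand $(a-\tfrac{i}{2}\hbar)\ev(H_{i,0})$ of Theorem~\ref{thm:main}(1). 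With $a=0$ nothing cancels $\hbar\sum_r\gamma_r e^{(r)}$; since $\gamma_r$ genuinely depends on $r$, this leftover is not of the form $\widetilde{\mu}(\text{something})$ (the degree-zero, weight-one part of the image of $\widetilde{\mu}$ only contains $r$-independent combinations $\sum_r e^{(r)}_{p,q}$ together with expressions whose linear and quadratic parts cannot be separated), so $G$ would not factor through $\widetilde{\mu}$ and $\Phi:=\widetilde{\mu}^{-1}\circ G$ would be undefined.

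Your proposed rescue --- that ``the central terms produced on reordering the quadratic sums'' supply the missing contribution --- does not work: reordering a quadratic sum supported in the single tensor factor $r$ can only produce multiples of the central charge $\alpha_r$ of that factor, while $\gamma_r=\sum_{u=r+1}^{l}\alpha_u$ involves the central charges of the \emph{other} factors; and the cross-factor sums in $B_i$, $C_i$, $D_i$ commute exactly, producing no central terms at all. The fix is simply to use $\ev_{\hbar,\ve-(q_s-q_l)\hbar}^{q_s,(\gamma_s-\frac{q_s-n_a}{2})\hbar}$ as in the paper; with that choice your pairing scheme (single-factor quadratic parts plus linear terms against $\ev(H^{(r)})$, cross-factor parts against $B_i+C_i+D_i$) is exactly the paper's decomposition into the eight groups \eqref{551-1}--\eqref{551-8}, and the rest of your outline goes through.
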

Similarly to the main theorem in \cite{KU} and \cite{U11}, it is enough to show the following theorem.
\begin{Theorem}
We obtain the following relation:
\begin{equation*}
\bigotimes_{1\leq s\leq l}\ev_{\hbar,\ve-(q_s-q_l)\hbar}^{q_s,\gamma_s\hbar}\circ\Delta_l=\widetilde{\mu}\circ\Phi.
\end{equation*}
\end{Theorem}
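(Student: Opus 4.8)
The plan is to verify the asserted identity $\bigotimes_{1\leq s\leq l}\ev_{\hbar,\ve-(q_s-q_l)\hbar}^{q_s,\gamma_s\hbar}\circ\Delta_l=\widetilde{\mu}\circ\Phi$ by evaluating both sides on the generators of $Y^{n_1,\cdots,n_v}_{\hbar,\ve}$ singled out in Remark~\ref{remark}, namely the elements $(X^\pm_{i,0})^{(a)}$ for $0\leq i\leq n_a-n_{a+1}-1$ and $(H_{i,1})^{(a)}$ for $1\leq i\leq n_a-n_{a+1}-1$. For the $X^\pm_{i,0}$ generators this is routine: $\Delta_l$ sends them to sums of the form $\sum_{1\leq s\leq b_a}(x^\pm_{i+q_s-n_a})^{(s)}$ (since $\Delta^n$, $\Psi_1$, $\Psi_2$ are all compatible with the $\mathfrak{sl}$-part of the Chevalley generators), each evaluation map sends $x^\pm_j$ to the corresponding elementary matrix (possibly with a $t^{\pm1}$ when $j=0$), and under $\widetilde\mu$ the element $W^{(1)}_{p,q}=\sum_{1\leq r\leq l}E^{(r)}_{p,q}[-1]$ of Theorem~\ref{Generators} maps to exactly that same sum of elementary matrices; matching the index shifts $\row/\col$ to the $q_1-q_s$ shifts in $\Phi$ closes the case.

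The substance is the $(H_{i,1})^{(a)}$ computation. First I would substitute the explicit formula for $\Delta_l((H_{i,1})^{(a)})=\sum_{1\leq s\leq b_a}H^{(s)}_{i+q_s-n_a}+B_i+C_i+D_i$ recorded at the end of Section~5, then apply $\bigotimes_s\ev^{q_s,\gamma_s\hbar}_{\hbar,\ve-(q_s-q_l)\hbar}$ termwise. The diagonal term $\sum_s H^{(s)}_{i+q_s-n_a}$ becomes, by Theorem~\ref{thm:main}, a sum over $s$ of the evaluation-map image of $H_{i+q_s-n_a,1}$; each such image is a linear term $(\gamma_s\hbar-\tfrac{i+q_s-n_a}{2}\hbar)H_{\cdots,0}$ plus a "quadratic current" tail of the shape $\hbar\sum_{t\geq0}\sum_k E_{\cdot,k}t^{-t}E_{k,\cdot}t^{t}$ living in a single tensor factor $U(\widehat{\mathfrak{gl}}(q_s))$. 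On the other side, I would feed the formula for $\Phi((H_{i,1})^{(a)})$ through $\widetilde\mu$, using that $\widetilde\mu(W^{(1)}_{p,q})=\sum_r E^{(r)}_{p,q}[-1]$ and $\widetilde\mu(W^{(2)}_{p,q})$ is the degree-$2$ expression of Theorem~\ref{Generators} (with its $\gamma_r$-linear part and its two double-sum corrections over $r_1<r_2$ and $r_1\geq r_2$). The heart of the proof is a bookkeeping identity: the $\gamma_r$-linear parts of $\widetilde\mu(W^{(2)})$ must reproduce the $\gamma_s\hbar\,H_{\cdot,0}$ terms coming from the evaluation maps; the "same factor" quadratic terms ($r_1=r_2$) in $\widetilde\mu(W^{(2)})$ plus the explicit $W^{(1)}W^{(1)}$ and single-sum $W^{(1)}t^{-s}W^{(1)}t^s$ terms in $\Phi((H_{i,1})^{(a)})$ must match the single-factor quadratic tails from $\ev$ together with $C_i$ and $D_i$ (which, recall, come from $\Psi_2$ and $\Psi_1$ and also produce single-factor-in-$r$ and cross-factor contributions); and the genuinely cross-factor terms ($r_1<r_2$) on the $W$-algebra side must match $B_i$ (which came from the affine-Yangian coproduct).

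Concretely I would organize the comparison by the "tensor-leg type" of each monomial: (i) terms supported on a single leg $r$, (ii) terms with one factor on leg $r_1$ and one on leg $r_2$ with $r_1<r_2$, (iii) the purely scalar/Cartan terms. For (iii) one reduces to the scalar identity $\sum_{s=1}^{b_a}\gamma_s\hbar=\cdots$ together with the shift $\ve-(q_s-q_l)\hbar$ versus $a=\gamma_s\hbar$ in the evaluation parameter, plus $\ve=\hbar(k+N-q_l)$; this fixes exactly why the evaluation parameter is chosen as $\gamma_s\hbar$ and why $\alpha_s=k+N-q_s$. For (i) and (ii), I would expand $E^{(r_1)}_{\cdot,\cdot}[-1]E^{(r_2)}_{\cdot,\cdot}[-1]$-type products in $\widetilde\mu(W^{(2)})$ against the expressions, being careful that $W^{(2)}_{p,q}t$ in $\Phi$ corresponds to $(W^{(2)}_{p,q})_{(0)}$ acting, i.e. to the degree-$1$ mode, so on the Miura side one gets $\sum_r E^{(r)}_{p,q}[-2]$-type contributions from the first sum of $W^{(2)}$ and $\sum_{r_1<r_2}$ Sugawara-like contributions from the remaining two sums; matching the $s$-summation ranges $1\leq u\leq q_1-q_v$ versus $q_1-q_{r_2}<u\leq q_1-q_v$ against the ranges $1\leq u\leq q_{r_2}-n_a+i$, $q_{r_2}-n_a+i+1\leq u\leq q_{r_2}$ in $B_i,C_i,D_i$ is where almost all the work is. The main obstacle I anticipate is precisely this index/range reconciliation in (ii): one must split the sums in $B_i$ at the "diagonal" $u=q_{r_2}-n_a+i$, use the $\widehat{\mathfrak{gl}}$ commutation relations (including the central terms with $\tilde c=\ve/\hbar$) to move modes past each other, and check that the resulting reordering exactly produces the $r_1<r_2$ double-sums in $\widetilde\mu(W^{(2)}_{p,q})$ after the shift $E^{(r)}_{i,j}\leftrightarrow E^{(r)}_{i+(q_1-q_r),j+(q_1-q_r)}$. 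Once the three leg-types are matched separately, summing them recovers the theorem; the $X^\pm$ cases having already been disposed of, this completes the verification on all generators and hence, by Remark~\ref{remark}, proves the identity of homomorphisms.
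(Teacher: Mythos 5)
Your proposal is correct and follows essentially the same route as the paper: reduce to the generators $X^\pm_{j,0}$ and $H_{i,1}$ via Remark~\ref{remark}, dispose of the $X^\pm$ case by matching $W^{(1)}_{p,q}=\sum_r E^{(r)}_{p,q}[-1]$ against the sum of Chevalley generators, and for $H_{i,1}$ expand $\Delta_l(H_{i,1})=\sum_s H^{(s)}+B_i+C_i+D_i$ through the evaluation maps and compare with $\widetilde\mu$ applied to the explicit $W^{(1)},W^{(2)}$ expressions of Theorem~\ref{Generators}. The paper organizes the resulting term-by-term matching into eight groups indexed by the target summands of $\Phi(H_{i,1})$ rather than by your single-leg/cross-leg/scalar trichotomy, but the bookkeeping — including the reordering of modes and the splitting of the $u$-ranges at $u=q_{r_2}-n_a+i$ — is the same.
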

\begin{proof}
It is enough to show that the relation holds for $X^\pm_{j,0} (0\leq j\leq n_a-n_{a+1}-1)$ and $H_{i,1} (1\leq i\leq n_a-n_{a+1}-1)$. It is trivial that the relation holds for $X^\pm_{j,0}$. Hence, we only show the latter one, that is,
\begin{equation}
\bigotimes_{1\leq s\leq l}\ev_{\hbar,\ve-(q_s-q_l)\hbar}^{q_s,(\gamma_s-\frac{q_s-n_a}{2})\hbar}\circ\Delta_l(H_{i,1})=\widetilde{\mu}\circ\Phi(H_{i,1})\label{goal}
\end{equation}
We denote $E_{i,j}^{(r)}t^a\in\widehat{\bigotimes}_{1\leq s\leq l}U(\widehat{\mathfrak{gl}}(q_s))$ by $e^{(r)}_{q_1-q_s+i,q_1-q_s+j}t^a$.

By the definition of the evaluation map and the last of Section~5, we find that the left hand side of \eqref{goal} is equal to the sum of the following four terms:
\begin{align}
&\quad\bigotimes_{1\leq s\leq b_a}\ev_{\hbar,\ve-(q_s-q_l)\hbar}^{q_s,(\gamma_s-\frac{q_s-n_a}{2})\hbar}(\bigotimes_{1\leq s\leq l}H^{(s)}_{i+q_s-n_a})\nonumber\\
&=\sum_{r=1}^{b_a}\limits(\gamma_r-\dfrac{i+q_s-n_a}{2}\hbar)(e^{(r)}_{q_1-n_a+i,q_1-n_a+i} -e^{(r)}_{q_1-n_a+i+1,q_1-n_a+i+1})\nonumber\\
&\quad-\sum_{r=1}^{b_a}\limits\hbar e^{(r)}_{q_1-n_a+i,q_1-n_a+i}e^{(r)}_{q_1-n_a+i+1,q_1-n_a+i+1} \nonumber\\
&\quad+ \hbar \sum_{r=1}^{b_a}\limits\displaystyle\sum_{s \geq 0}  \limits\displaystyle\sum_{u=q_1-q_r+1}^{q_1-n_a+i}\limits  e^{(r)}_{q_1-n_a+i,u}t^{-s}e^{(r)}_{u,q_1-n_a+i}t^s\nonumber\\
&\quad+\hbar\sum_{r=1}^{b_a}\limits \displaystyle\sum_{s \geq 0} \limits\displaystyle\sum_{u=q_1-n_a+i+1}^{q_1}\limits  e^{(r)}_{q_1-n_a+i,u}t^{-s-1}e^{(r)}_{u,q_1-n_a+i}t^{s+1}\nonumber\\
&\quad-\hbar\sum_{r=1}^{b_a}\limits\displaystyle\sum_{s \geq 0}\limits\displaystyle\sum_{u=q_1-q_r+1}^{q_1-n_a+i}\limits e^{(r)}_{q_1-n_a+i+1,u}t^{-s} e^{(r)}_{u,q_1-n_a+i+1}t^{s}\nonumber\\
&\quad-\hbar\sum_{r=1}^{b_a}\limits\displaystyle\sum_{s \geq 0}\limits\displaystyle\sum_{u=q_1-n_a+i+1}^{q_1} \limits e^{(r)}_{q_1-n_a+i+1,u}t^{-s-1} e^{(r)}_{u,q_1-n_a+i+1}t^{s+1},\label{551}\\
&\quad\bigotimes_{1\leq s\leq b_a}\ev_{\hbar,\ve-(q_s-q_l)\hbar}^{q_s,(\gamma_s-\frac{q_s-n_a}{2})\hbar}(B_i)\nonumber\\
&=-\hbar\sum_{1\leq r_1<r_2\leq b_a}\limits e^{(r_1)}_{q_1-n_a+i,q_1-n_a+i}e^{(r_2)}_{q_1-n_a+i+1,q_1-n_a+i+1}\nonumber\\
&\quad-\hbar\sum_{1\leq r_1<r_2\leq b_a}\limits e^{(r_1)}_{q_1-n_a+i+1,q_1-n_a+i+1}e^{(r_2)}_{q_1-n_a+i,q_1-n_a+i}\nonumber\\
&\quad-\hbar\sum_{1\leq r_1<r_2\leq b_a}\limits\displaystyle\sum_{s \geq 0}  \limits\displaystyle\sum_{u=1}^{q_{r_2}-n_a+i}\limits e^{(r_1)}_{q_{1}-q_{r_2}+u,q_{1}-n_a+i}t^{-s-1}E^{(r_2)}_{q_{1}-q_{r_2}+i,q_{1}-q_{r_2}+u}t^{s+1}\nonumber\\
&\quad+\hbar\sum_{1\leq r_1<r_2\leq b_a}\limits\displaystyle\sum_{s \geq 0}  \limits\displaystyle\sum_{u=1}^{q_{r_2}-n_a+i}\limits e^{(r_1)}_{q_{1}-n_a+i,q_{1}-q_{r_2}+u}t^{-s}e^{(r_2)}_{q_{1}-q_{r_2}+u,q_{1}-n_a+i}t^s\nonumber\\
&\quad-\hbar\sum_{1\leq r_1<r_2\leq b_a}\limits\displaystyle\sum_{s \geq 0} \limits\displaystyle\sum_{u=q_{r_2}-n_a+i+1}^{q_{r_2}}\limits e^{(r_1)}_{q_{1}-q_{r_2}+u,q_{1}-n_a+i}t^{-s}e^{(r_2)}_{q_{1}-n_a+i,q_{1}-q_{r_2}+u}t^{s}\nonumber\\
&\quad+\hbar\sum_{1\leq r_1<r_2\leq b_a}\limits\displaystyle\sum_{s \geq 0} \limits\displaystyle\sum_{u=q_{r_2}-n_a+i+1}^{q_{r_2}}\limits e^{(r_1)}_{q_{1}-n_a+i,q_{1}-q_{r_2}+u}t^{-s-1}e^{(r_2)}_{q_{1}-q_{r_2}+u,q_{1}-n_a+i}t^{s+1}\nonumber\\
&\quad+\hbar\sum_{1\leq r_1<r_2\leq b_a}\limits\displaystyle\sum_{s \geq 0}\limits\displaystyle\sum_{u=1}^{q_{r_2}-n_a+i}\limits e^{(r_1)}_{q_{1}-q_{r_2}+u,q_{1}-n_a+i+1}t^{-s-1}E^{(r_2)}_{q_{1}-n_a+i+1,q_{1}-q_{r_2}+u}t^{s+1}\nonumber\\
&\quad-\hbar\sum_{1\leq r_1<r_2\leq b_a}\limits\displaystyle\sum_{s \geq 0}\limits\displaystyle\sum_{u=1}^{q_{r_2}-n_a+i}\limits e^{(r_1)}_{q_{1}-n_a+i+1,q_{1}-q_{r_2}+u}t^{-s}e^{(r_2)}_{q_{1}-q_{r_2}+u,q_{1}-n_a+i+1}t^s\nonumber\\
&\quad+\hbar\sum_{1\leq r_1<r_2\leq b_a}\limits\displaystyle\sum_{s \geq 0}\limits\displaystyle\sum_{u=q_{r_2}-n_a+i+1}^{q_{r_2}} \limits e^{(r_1)}_{q_{1}-q_{r_2}+u,q_{1}-n_a+i+1}t^{-s} e^{(r_2)}_{q_{1}-n_a+i+1,q_{1}-q_{r_2}+u}t^{s}\nonumber\\
&\quad-\hbar\sum_{1\leq r_1<r_2\leq b_a}\limits\displaystyle\sum_{s \geq 0}\limits\displaystyle\sum_{u=q_{r_2}-n_a+i+1}^{q_{r_2}} \limits e^{(r_1)}_{q_{1}-n_a+i+1,q_{1}-q_{r_2}+u}t^{-s-1}e^{(r_2)}_{q_{1}-q_{r_2}+u,q_{1}-n_a+i+1}t^{s+1},\label{552}\\
&\quad\bigotimes_{1\leq s\leq b_a}\ev_{\hbar,\ve-(q_s-q_l)\hbar}^{q_s,(\gamma_s-\frac{q_s-n_a}{2})\hbar}(C_i)\nonumber\\
&=\hbar\sum_{r=1}^{b_a}\displaystyle\sum_{s \geq 0}\limits\sum_{u=1}^{q_r-n_a} e^{(r)}_{q_1-q_r+u,q_1-n_a+i}t^{-s-1}e^{(r)}_{q_1-q_r+i,q_1-n_a+u}t^{s+1}\nonumber\\
&\quad-\hbar\sum_{r=1}^{b_a}\displaystyle\sum_{s \geq 0}\limits\sum_{u=1}^{q_r-n_a} e^{(r)}_{q_1-q_r+u,q_1-q_r+i+1}t^{-s-1} e^{(r)}_{q_1-q_r+i+1,q_1-q_r+u}t^{s+1}\nonumber\\
&\quad+\hbar\displaystyle\sum_{1\leq r_1<r_2\leq b_a}\sum_{s \geq 0}\limits\sum_{u=1}^{q_{r_2}-n_a} e^{(r_1)}_{q_1-q_{r_2}+u,q_1-n_a+i}t^{-s-1}e^{(r_2)}_{q_1-n_a+i,q_1-q_{r_2}+u}t^{s+1}\nonumber\\
&\quad-\hbar\displaystyle\sum_{s \geq 0}\limits\sum_{u=1}^{q_{r_2}-n_a} e^{(r_1)}_{q_1-q_{r_2}+u,q_1-n_a+i+1}t^{-s-1} e^{(r_2)}_{q_1-n_a+i+1,q_1-q_{r_2}+u}t^{s+1}\nonumber\\
&\quad+\hbar\displaystyle\sum_{s \geq 0}\limits\sum_{u=1}^{q_{r_2}-n_a} e^{(r_2)}_{q_1-q_{r_2}+u,q_1-n_a+i}t^{-s-1} e^{(r_1)}_{q_1-n_a+i,q_1-q_{r_2}+u}t^{s+1}\nonumber\\
&\quad-\hbar\displaystyle\sum_{s \geq 0}\limits\sum_{u=1}^{q_{r_2}-n_a} e^{(r_2)}_{q_1-q_{r_2}+u,q_1-n_a+i+1}t^{-s-1} e^{(r_1)}_{q_1-n_a+i+1,q_1-q_{r_2}+u}t^{s+1},\label{553}\\
&\quad\bigotimes_{1\leq s\leq l}\ev_{\hbar,\ve-(q_s-q_l)\hbar}^{q_s,(\gamma_s-\frac{q_s-n_a}{2})\hbar}(D_i)\nonumber\\
&=-\hbar\sum_{r=1}^{b_a}\displaystyle\sum_{s \geq 0}\limits\sum_{u=q_r-n_{a+1}+1}^{q_r} e^{(r)}_{q_1-n_a+i,q_1-q_r+u}t^{-s-1}e^{(r)}_{q_1-q_r+u,q_1-n_a+i}t^{s+1}\nonumber\\
&\quad+\hbar\sum_{r=1}^{b_a}\displaystyle\sum_{s \geq 0}\limits\sum_{u=q_r-n_{a+1}+1}^{q_r} e^{(r)}_{q_1-n_a+i+1,q_1-q_r+u}t^{-s-1} e^{(r)}_{q_1-q_r+u,q_1-n_a+i+1}t^{s+1}\nonumber\\
&\quad-\hbar\displaystyle\sum_{1\leq r_1<r_2\leq b_a}\sum_{s \geq 0}\limits\sum_{u=q_{r_2}-n_{a+1}+1}^{q_{r_2}} e^{(r_1)}_{q_{1}-n_a+i,q_{1}-q_{r_2}+u}t^{-s-1}e^{(r_2)}_{q_{1}-q_{r_2}+u,q_{1}-n_a+i}t^{s+1}\nonumber\\
&\quad+\hbar\sum_{1\leq r_1<r_2\leq b_a}\displaystyle\sum_{s \geq 0}\limits\sum_{u=q_{r_2}-n_{a+1}+1}^{q_{r_2}} e^{(r_1)}_{q_{1}-n_a+i+1,q_{1}-q_{r_2}+u}t^{-s-1}e^{(r_2)}_{q_{1}-q_{r_2}+u,q_{1}-n_a+i+1}t^{s+1}\nonumber\\
&\quad-\hbar\displaystyle\sum_{1\leq r_1<r_2\leq b_a}\sum_{s \geq 0}\limits\sum_{u=q_{r_2}-n_{a+1}+1}^{q_{r_2}} e^{(r_2)}_{q_1-n_a+i,q_{1}-q_{r_2}+u}t^{-s-1}e^{(r_1)}_{q_{1}-q_{r_2}+u,q_{1}-n_a+i}t^{s+1}\nonumber\\
&\quad+\hbar\sum_{1\leq r_1<r_2\leq b_a}\displaystyle\sum_{s \geq 0}\limits\sum_{u=q_{r_2}-n_{a+1}+1}^{q_{r_2}} e^{(r_2)}_{q_{1}-n_a+i+1,q_{1}-q_{r_2}+u}t^{-s-1}e^{(r_1)}_{q_{1}-q_{r_2}+u,q_{1}-n_a+i+1}t^{s+1}.\label{555}
\end{align}
By the definition of $\widetilde{\mu}$, we have
\begin{align}
&\quad\hbar\widetilde{\mu}(W^{(2)}_{q_1-n_a+i,q_1-n_a+i}t)-\hbar\widetilde{\mu}(W^{(2)}_{q_1-n_a+i+1,q_1-n_a+i+1}t)\nonumber\\
&=\hbar\sum_{r=1}^{b_a}\gamma_re^{(r)}_{q_1-n_a+i,q_1-n_a+i}+\sum_{s\in\mathbb{Z}}\limits\sum_{1\leq r_1<r_2\leq b_a}\sum_{u>q_1-n_a}\limits e^{(r_1)}_{u,q_1-n_a+i}t^{-s}e^{(r_2)}_{q_1-n_a+i,u}t^s\nonumber\\
&\quad-\hbar\sum_{s\geq0}\limits\sum_{r=1}^{b_a}\limits\sum_{1\leq u \leq q_r-n_a}\limits e^{(r)}_{q_1-q_r+u,q_1-n_a+i}t^{-s-1}e^{(r)}_{q_1-n_a+i,q_1-q_r+u}t^{s+1}\nonumber\\
&\quad-\hbar\sum_{s\geq0}\limits\sum_{r=1}^{b_a}\limits\sum_{1\leq u \leq q_r-n_a}\limits e^{(r)}_{q_1-n_a+i,q_1-q_r+u}t^{-s}e^{(r)}_{q_1-q_r+u,q_1-n_a+i}t^{s}\nonumber\\
&\quad-\hbar\sum_{s\in\mathbb{Z}}\limits\sum_{1\leq r_1<r_2\leq b_a}\limits\sum_{1\leq u \leq q_{r_2}-n_a}\limits e^{(r_1)}_{q_1-n_a+i,q_1-q_{r_2}+u}t^{s+1}e^{(r_2)}_{q_1-q_{r_2}+u,q_1-n_a+i}t^{-s-1}\nonumber\\
&\quad-\hbar\sum_{r=1}^{b_a}\gamma_re^{(r)}_{q_1-n_a+i+1,q_1-n_a+i+1}-\sum_{s\in\mathbb{Z}}\limits\sum_{1\leq r_1<r_2\leq b_a}\sum_{u>q_1-n_a}\limits e^{(r_1)}_{u,q_1-n_a+i+1}t^{-s}e^{(r_2)}_{q_1-n_a+i+1,u}t^s\nonumber\\
&\quad+\hbar\sum_{s\geq0}\limits\sum_{r=1}^{b_a}\limits\sum_{1\leq u \leq q_r-n_a}\limits e^{(r)}_{q_1-q_r+u,q_1-n_a+i+1}t^{-s-1}e^{(r)}_{q_1-n_a+i+1,q_1-q_r+u}t^{s+1}\nonumber\\
&\quad+\hbar\sum_{s\geq0}\limits\sum_{r=1}^{b_a}\limits\sum_{1\leq u \leq q_r-n_a}\limits e^{(r)}_{q_1-n_a+i+1,q_1-q_r+u}t^{-s}e^{(r)}_{q_1-q_r+u,q_1-n_a+i+1}t^{s}\nonumber\\
&\quad+\hbar\sum_{s\in\mathbb{Z}}\limits\sum_{1\leq r_1<r_2\leq b_a}\limits\sum_{1\leq u \leq q_{r_2}-n_a}\limits e^{(r_1)}_{q_1-n_a+i+1,q_1-q_{r_2}+u}t^{s+1}e^{(r_2)}_{q_1-q_{r_2}+u,q_1-n_a+i+1}t^{-s-1}.\label{554}
\end{align}
Here after, in order to simplify the notation, we denote the right hand side of $i$-th term of equation labeled $(\ )$ by $(\ )_i$.
The sum of \eqref{551}-\eqref{554} is equal to the sum of the left hand side of \eqref{goal} and $\hbar\widetilde{\mu}(W^{(2)}_{q_1-n_a+i,q_1-n_a+i}t)-\hbar\widetilde{\mu}(W^{(2)}_{q_1-n_a+i+1,q_1-n_a+i+1}t)$. 
We divide this sum into 8 piecies:
\begin{align}
&\quad\eqref{551}_1+\eqref{554}_1+\eqref{554}_6\nonumber\\
&=-\dfrac{i}{2}\hbar \widetilde{\mu}(W^{(1)}_{q_1-n_a+i,q_1-n_a+i}-W^{(1)}_{q_1-n_a+i+1,q_1-n_a+i+1}),\label{551-1}\\
&\quad\eqref{551}_2+\eqref{552}_1+\eqref{552}_2\nonumber\\
&=-\hbar\widetilde{\mu}(W^{(1)}_{q_1-n_a+i,q_1-n_a+i}W^{(1)}_{q_1-n_a+i+1,q_1-n_a+i+1}),\label{551-2}\\
&\quad\eqref{551}_3+\eqref{551}_4+\eqref{553}_1+\eqref{555}_1+\eqref{554}_3+\eqref{554}_4\nonumber\\
&=\hbar \sum_{r=1}^{b_a}\limits\displaystyle\sum_{s \geq 0}  \limits\displaystyle\sum_{u=q_1-n_a+1}^{q_1-n_a+i}\limits  e^{(r)}_{q_1-n_a+i,u}t^{-s}e^{(r)}_{u,q_1-n_a+i}t^s\nonumber\\
&\quad+\hbar\sum_{r=1}^{b_a}\limits \displaystyle\sum_{s \geq 0} \limits\displaystyle\sum_{u=q_1-n_a+i+1}^{q_1-n_{a+1}}\limits  e^{(r)}_{q_1-n_a+i,u}t^{-s-1}e^{(r)}_{u,q_1-n_a+i}t^{s+1},\label{551-3}\\
&\quad\eqref{551}_5+\eqref{551}_6+\eqref{553}_1+\eqref{555}_2+\eqref{554}_{8}+\eqref{554}_{9}\nonumber\\
&=-\hbar\sum_{r=1}^{b_a}\limits\displaystyle\sum_{s \geq 0}\limits\displaystyle\sum_{u=q_1-n_a+1}^{q_1-n_a+i}\limits e^{(r)}_{q_1-n_a+i+1,u}t^{-s} e^{(r)}_{u,q_1-n_a+i+1}t^{s}\nonumber\\
&\quad-\hbar\sum_{r=1}^{b_a}\limits\displaystyle\sum_{s \geq 0}\limits\displaystyle\sum_{u=q_1-n_a+i+1}^{q_1-n_{a+1}} \limits e^{(r)}_{q_1-n_a+i+1,u}t^{-s-1} e^{(r)}_{u,q_1-n_a+i+1}t^{s+1}\label{551-4}\\
&\quad\eqref{552}_3+\eqref{552}_5+\eqref{553}_3+\eqref{555}_5+\eqref{554}_{2}\nonumber\\
&=\hbar\sum_{1\leq r_1<r_2\leq b_a}\limits\displaystyle\sum_{s \geq 0}  \limits\displaystyle\sum_{u=q_{r_2}-n_a+1}^{q_{r_2}-n_a+i}\limits e^{(r_1)}_{q_{1}-q_{r_2}+u,q_{1}-n_a+i}t^{s}E^{(r_2)}_{q_{1}-q_{r_2}+i,q_{1}-q_{r_2}+u}t^{-s}\nonumber\\
&\quad+\hbar\sum_{1\leq r_1<r_2\leq b_a}\limits\displaystyle\sum_{s \geq 0} \limits\displaystyle\sum_{u=q_{r_2}-n_a+i+1}^{q_{r_2}-n_{a+1}}\limits e^{(r_1)}_{q_{1}-q_{r_2}+u,q_{1}-n_a+i}t^{s+1}e^{(r_2)}_{q_{1}-n_a+i,q_{1}-q_{r_2}+u}t^{-s-1},\label{551-5}\\
&\quad\eqref{552}_4+\eqref{552}_6+\eqref{553}_5+\eqref{555}_3+\eqref{554}_{5}\nonumber\\
&=\hbar\sum_{1\leq r_1<r_2\leq b_a}\limits\displaystyle\sum_{s \geq 0}  \limits\displaystyle\sum_{u=q_{r_2}-n_a+1}^{q_{r_2}-n_a+i}\limits e^{(r_1)}_{q_{1}-n_a+i,q_{1}-q_{r_2}+u}t^{-s}e^{(r_2)}_{q_{1}-q_{r_2}+u,q_{1}-n_a+i}t^s\nonumber\\
&\quad+\hbar\sum_{1\leq r_1<r_2\leq b_a}\limits\displaystyle\sum_{s \geq 0} \limits\displaystyle\sum_{u=q_{r_2}-n_a+i+1}^{q_{r_2}-n_{a+1}}\limits e^{(r_1)}_{q_{1}-n_a+i,q_{1}-q_{r_2}+u}t^{-s-1}e^{(r_2)}_{q_{1}-q_{r_2}+u,q_{1}-n_a+i}t^{s+1},\label{551-6}\\
&\quad\eqref{552}_7+\eqref{552}_9+\eqref{553}_4+\eqref{555}_6+\eqref{554}_{7}\nonumber\\
&=-\hbar\sum_{1\leq r_1<r_2\leq b_a}\limits\displaystyle\sum_{s \geq 0}\limits\displaystyle\sum_{u=q_{r_2}-n_a+1}^{q_{r_2}-n_a+i}\limits e^{(r_1)}_{q_{1}-q_{r_2}+u,q_{1}-n_a+i+1}t^{s}E^{(r_2)}_{q_{1}-n_a+i+1,q_{1}-q_{r_2}+u}t^{-s}\nonumber\\
&\quad-\hbar\sum_{1\leq r_1<r_2\leq b_a}\limits\displaystyle\sum_{s \geq 0}\limits\displaystyle\sum_{u=q_{r_2}-n_a+i+1}^{q_{r_2}-n_{a+1}} \limits e^{(r_1)}_{q_{1}-q_{r_2}+u,q_{1}-n_a+i+1}t^{s+1} e^{(r_2)}_{q_{1}-n_a+i+1,q_{1}-q_{r_2}+u}t^{-s-1},\label{551-7},\\
&\quad\eqref{552}_8+\eqref{552}_{10}+\eqref{553}_6+\eqref{555}_4+\eqref{554}_{10}\nonumber\\
&=-\hbar\sum_{1\leq r_1<r_2\leq b_a}\limits\displaystyle\sum_{s \geq 0}\limits\displaystyle\sum_{u=q_{r_2}-n_a+1}^{q_{r_2}-n_a+i}\limits e^{(r_1)}_{q_{1}-n_a+i+1,q_{1}-q_{r_2}+u}t^{-s}e^{(r_2)}_{q_{1}-q_{r_2}+u,q_{1}-n_a+i+1}t^s\nonumber\\
&\quad-\hbar\sum_{1\leq r_1<r_2\leq b_a}\limits\displaystyle\sum_{s \geq 0}\limits\displaystyle\sum_{u=q_{r_2}-n_a+i+1}^{q_{r_2}-n_{a+1}} \limits e^{(r_1)}_{q_{1}-n_a+i+1,q_{1}-q_{r_2}+u}t^{-s-1}e^{(r_2)}_{q_{1}-q_{r_2}+u,q_{1}-n_a+i+1}t^{s+1}.\label{551-8}
\end{align}
By a direct computation, we find that the suma of \eqref{551-5}-\eqref{551-8} is equal to
\begin{align*}
&\hbar \sum_{u=1}^iW^{(1)}_{q_1-n_a+i,q_{1}-n_a+u}t^{-s}W^{(1)}_{q_{1}-n_a+u,q_{1}-n_a+i}t^s\nonumber\\
&\quad+\hbar \sum_{u=i+1}^{n_a-n_{a+1}}W^{(1)}_{q_1-n_a+i,q_{1}-n_a+u}t^{-s-1}W^{(1)}_{q_{1}-n_a+u,q_{1}-n_a+i}t^{s+1}\\
&\quad-\hbar \sum_{u=1}^iW^{(1)}_{q_1-n_a+i+1,q_{1}-n_a+u}t^{-s}W^{(1)}_{q_{1}-n_a+u,q_{1}-n_a+i+1}t^s\nonumber\\
&\quad-\hbar \sum_{u=i+1}^{n_a-n_{a+1}}W^{(1)}_{q_1-n_a+i+1,q_{1}-n_a+u}t^{-s-1}W^{(1)}_{q_{1}-n_a+u,q_{1}-n_a+i+1}t^{s+1}.
\end{align*}
Thus, we have obtained the relation \eqref{goal}.
\end{proof}
\section{Affine Yangians and some cosets of non-rectangular $W$-algebras}
Restricting the homomorphism $\Phi$ to $Y_{\hbar,\ve_s}(\widehat{\mathfrak{sl}}(n_s-n_{s+1}))$, we obtain the homomorpism
\begin{equation*}
\Phi_s\colon Y_{\hbar,\ve_s}(\widehat{\mathfrak{sl}}(n_s-n_{s+1}))\to\mathcal{U}(\mathcal{W}^k(\mathfrak{gl}(N),f)).
\end{equation*}
These are homomorphisms given in Theorem 4.7 of \cite{U9}. By Theorem~\ref{Main}, we obtain the following theorem.
\begin{Theorem}\label{Com}
The images of $\Phi_s$ are commutative with each other.
\end{Theorem}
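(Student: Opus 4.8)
The plan is to derive Theorem~\ref{Com} as a purely formal consequence of Theorem~\ref{Main}, using nothing more than the observation that inside a tensor product of associative algebras the images of two distinct tensor factors commute. By Theorem~\ref{Main} the map $\Phi$ is an algebra homomorphism whose source is the \emph{whole} tensor product $Y^{n_1,\cdots,n_v}_{\hbar,\ve}=\bigotimes_{1\leq s\leq v}Y_{\hbar,\ve_s}(\widehat{\mathfrak{sl}}(n_s-n_{s+1}))$, and by definition $\Phi_s$ is the restriction of $\Phi$ along the embedding of the $s$-th tensor factor $x\mapsto x^{(s)}$.

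First I would fix $s\neq t$ together with arbitrary $x\in Y_{\hbar,\ve_s}(\widehat{\mathfrak{sl}}(n_s-n_{s+1}))$ and $y\in Y_{\hbar,\ve_t}(\widehat{\mathfrak{sl}}(n_t-n_{t+1}))$. Since $x^{(s)}$ and $y^{(t)}$ sit in different factors of $\bigotimes_{1\leq s\leq v}Y_{\hbar,\ve_s}(\widehat{\mathfrak{sl}}(n_s-n_{s+1}))$, they commute there, i.e.\ $[x^{(s)},y^{(t)}]=0$. Applying the algebra homomorphism $\Phi$ then yields
\begin{equation*}
[\Phi_s(x),\Phi_t(y)]=[\Phi(x^{(s)}),\Phi(y^{(t)})]=\Phi\big([x^{(s)},y^{(t)}]\big)=0,
\end{equation*}
which is exactly the assertion that the images of $\Phi_s$ and $\Phi_t$ commute. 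If one prefers to argue only on a generating set, Remark~\ref{remark} lets us restrict $x$ and $y$ to the generators $X^{\pm}_{i,0}$, $H_{j,1}$ of the respective affine Yangians, but the identity above already handles all elements at once.

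I do not anticipate a genuine obstacle here: the entire weight of the statement has been shifted onto Theorem~\ref{Main}, namely onto the fact that the individually defined homomorphisms $\Phi_s$ of \cite{U7} and \cite{U9} assemble into a single homomorphism out of the tensor product. The only point worth a remark is that $\mathcal{U}(\mathcal{W}^{k}(\mathfrak{gl}(N),f))$ is a degreewise completion, so a priori one should check that the commutator computation above really takes place inside this completed algebra; but this is automatic once $\Phi$ is known to be an honest algebra homomorphism into it, as Theorem~\ref{Main} guarantees. As a more computational alternative one could instead compose with the injective Miura map $\widetilde{\mu}$ and verify that the restrictions of $\bigotimes_{1\leq i\leq l}\ev^{q_i}\circ\Delta_l$ to the $s$-th and $t$-th tensor factors commute inside $\widehat{\bigotimes}_{1\leq i\leq l}U(\widehat{\mathfrak{gl}}(q_i))$, which reduces to the disjointness of the index ranges entering $B_i$, $C_i$, $D_i$ for different values of the factor index; this works as well, but is strictly more laborious than the tensor-factor argument.
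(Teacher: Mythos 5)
Your argument is correct and is exactly the paper's: the paper states Theorem~\ref{Com} as an immediate consequence of Theorem~\ref{Main}, since $\Phi_s$ is by definition the restriction of the single algebra homomorphism $\Phi$ to the $s$-th tensor factor of $Y^{n_1,\cdots,n_v}_{\hbar,\ve}$, and distinct tensor factors commute in the source. Your write-up simply makes explicit the one-line reasoning the paper leaves implicit, so there is nothing to add.
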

In this section, we consider these homomorphisms from the perspective of the coset of a vertex algebra. For a vertex algebra $A$ and its vertex subalgebra $B$, we set a coset vertex algebra of the pair $(A,B)$ as follows:
\begin{align*}
C(A,B)=\{v\in A\mid w_{(r)}v=0\text{ for }w\in B\text{ and }r\geq0\}.
\end{align*}
\subsection{The case that $b_1=1$}
In the case that $b_1=1$, we have an embedding from $\mathcal{U}(\widehat{\mathfrak{gl}}(n_1-n_2))$ to $\mathcal{U}(\mathcal{W}^k(\mathfrak{gl}(N),f))$ given by $E_{i,j}t^s$ to $W^{(1)}_{i,j}t^s$. Through this embedding, $\Phi_1$ becomes an evaluation map for the affine Yangian. 

We note that $\mathcal{U}(\widehat{\mathfrak{gl}}(n_1-n_2))$ can be considered as the universal enveloping algebra of the universal affine vertex algebra associated with $\mathfrak{gl}(n_1-n_2)$ and the inner product defined by
\begin{equation*}
\kappa(E_{i,j},E_{p,q})=(k+N-q_1)\delta_{i,q}\delta_{p,j}+\delta_{i,j}\delta_{p,q}.
\end{equation*}
By Theorem~\ref{Com} and Theorem~\ref{thm:main}, we obtain the following theorem by the same way as Theorem 5.9 in \cite{U10}:
\begin{Theorem}\label{1}
In the case that $k+N-q_l\neq0$, the images of $\Phi_a$ are contained in the universal enveloping algebra of $C(\mathcal{W}^k(\mathfrak{gl}(N),f),V^\kappa(\mathfrak{sl}(n_1-n_2)))$.
\end{Theorem}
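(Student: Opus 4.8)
The plan is to combine the commutativity of the images of the $\Phi_a$ (Theorem~\ref{Com}) with the description of $\Phi_1$ as an evaluation map in the case $b_1=1$, and then invoke the surjectivity statement of Theorem~\ref{thm:main}(2) to identify the completed image of $\Phi_1$ with a copy of $\mathcal{U}(V^\kappa(\mathfrak{gl}(n_1-n_2)))$ sitting inside $\mathcal{U}(\mathcal{W}^k(\mathfrak{gl}(N),f))$. Concretely, when $b_1=1$ the embedding $E_{i,j}t^s\mapsto W^{(1)}_{i,j}t^s$ exhibits $\mathcal{U}(\widehat{\mathfrak{gl}}(n_1-n_2))$ as a subalgebra of $\mathcal{U}(\mathcal{W}^k(\mathfrak{gl}(N),f))$, and by the earlier discussion $\Phi_1$ factors through this copy as the evaluation map $\ev_{\hbar,\ve_1}^{n_1-n_2,a}$ for the appropriate shift $a$. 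First I would record that, since $k+N-q_l\neq0$, we are in the regime $\ve_1\neq0$ of Theorem~\ref{thm:main}(2), so the image of this evaluation map is dense in $\mathcal{U}(\widehat{\mathfrak{gl}}(n_1-n_2))$; hence the degreewise completion of the image of $\Phi_1$ equals the completed copy of $\mathcal{U}(\widehat{\mathfrak{gl}}(n_1-n_2))$ inside $\mathcal{U}(\mathcal{W}^k(\mathfrak{gl}(N),f))$.

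Next I would separate out the Heisenberg part: $\mathfrak{gl}(n_1-n_2)=\mathfrak{sl}(n_1-n_2)\oplus\mathbb{C}I$, and correspondingly the copy of $\mathcal{U}(\widehat{\mathfrak{gl}}(n_1-n_2))$ contains the completed enveloping algebra of the affinization of $\mathfrak{sl}(n_1-n_2)$, namely (a completion of) $\mathcal{U}(V^\kappa(\mathfrak{sl}(n_1-n_2)))$, with $\kappa$ the restriction of the inner product displayed just before the theorem. The goal is to show $\mathrm{Im}(\Phi_a)\subseteq\mathcal{U}\bigl(C(\mathcal{W}^k(\mathfrak{gl}(N),f),V^\kappa(\mathfrak{sl}(n_1-n_2)))\bigr)$ for every $a$. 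For $a=1$ this would follow from the fact that $V^\kappa(\mathfrak{sl}(n_1-n_2))$ is (conjecturally, or in the relevant completed sense) its own commutant-target, but the essential content is for $a\geq2$: by Theorem~\ref{Com} the image of $\Phi_a$ commutes with the image of $\Phi_1$, hence with the completed copy of $\mathcal{U}(V^\kappa(\mathfrak{sl}(n_1-n_2)))$, which means that every element $x\in\mathrm{Im}(\Phi_a)$ satisfies $[\,w t^r,x\,]=0$ for all $w\in\mathfrak{sl}(n_1-n_2)$ and all $r\in\mathbb{Z}$, in particular for $r\geq0$. Translating the commutator condition $[wt^r,x]=0$ for $r\geq0$ in $\mathcal{U}$ into the defining condition $w_{(r)}v=0$ for $r\geq0$ of the coset vertex algebra — using Definition~\ref{Defi}, i.e. the relation \eqref{241} that identifies $(u_{(a)}v)t^b$ with a commutator of currents — is the step that actually places $x$ in $\mathcal{U}(C(\mathcal{W}^k(\mathfrak{gl}(N),f),V^\kappa(\mathfrak{sl}(n_1-n_2))))$. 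This is the same argument pattern as Theorem 5.9 in \cite{U10}, and I would cite that for the compatibility between "commutes with all the modes $wt^r$, $r\in\mathbb{Z}$" and "lies in the universal enveloping algebra of the coset."

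The main obstacle I anticipate is the passage between the associative-algebra picture (commutation of images inside $\mathcal{U}$) and the vertex-algebra picture (membership in the coset $C(A,B)$), together with the completion bookkeeping: one must check that the density of the image of the evaluation map really lets us replace "commutes with $\mathrm{Im}(\Phi_1)$" by "commutes with the full completed $\mathcal{U}(V^\kappa(\mathfrak{sl}(n_1-n_2)))$," and that taking commutants is well-behaved under the degreewise completion used throughout (so that $\mathrm{Im}(\Phi_a)$, which lives in the completion, has its coset-membership detected at the level of the modes $W^{(1)}_{i,j}t^r$). I would handle this exactly as in \cite{U10}: reduce to generators $X^\pm_{j,0}$ and $H_{i,1}$ of $Y_{\hbar,\ve_s}(\widehat{\mathfrak{sl}}(n_s-n_{s+1}))$ (Remark~\ref{remark}), use Theorem~\ref{Com} to get vanishing of $[W^{(1)}_{i,j}t^r,\Phi_a(\text{generator})]$ for all $r$, and then quote the coset characterization. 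No new computation beyond what Theorem~\ref{Com} already provides should be needed.
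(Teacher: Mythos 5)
Your proposal follows essentially the same route as the paper: in the case $b_1=1$ identify $\Phi_1$ with an evaluation map through the embedding $E_{i,j}t^s\mapsto W^{(1)}_{i,j}t^s$, use the density statement of Theorem~\ref{thm:main}(2) under the non-criticality hypothesis, and combine it with the commutativity from Theorem~\ref{Com} and the coset-translation argument of Theorem 5.9 in \cite{U10}. The paper gives no more detail than this sketch, so your filled-in version (including the reduction to generators and the mode-by-mode translation via \eqref{241}) is a faithful expansion of its argument.
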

\begin{Remark}
In Theorem 4.5 of \cite{U5}, we have computed the OPEs of the $W$-algebra $\mathcal{W}^k(\mathfrak{gl}(m+n),(1^{m-n},2^n))$. By using these OPEs, in the case that $l=2$ and $q_1=m>q_2=n$, we can show that the image of $\Phi_2$ is contained in the universal enveloping algebra of $C(\mathcal{W}^k(\mathfrak{gl}(m+n),f),V^\kappa(\mathfrak{sl}(m-n)))$ for any $k\in\mathbb{C}$.
We expect that Theorem~\ref{1} holds for any $k\in\mathbb{C}$.
\end{Remark}
\subsection{The case that $b_1=2$}
First, let us recall the main results of \cite{U4}, which gave $\Phi_1$ for a rectangular $W$-algebra. 
\begin{Theorem}[Theorem 3.17 and Theorem 5.1 in \cite{U4}]\label{rect}
\begin{enumerate}
\item In the case that $k+(l-1)n\neq0$, the rectangular $W$-algebra $\mathcal{W}^k(\mathfrak{gl}(ln),(l^n))$ is generated by 
$\{W^{(r)}_{i,j}\mid 1\leq i,j\leq n,r=1,2\}$.
\item The image of the homomorphism 
\begin{equation*}
\Phi_1\colon Y_{\hbar,\ve}(\widehat{\mathfrak{sl}}(n))\to\mathcal{U}(\mathcal{W}^k(\mathfrak{gl}(ln),(l^n)))
\end{equation*}
is dense in the target if $k+(l-1)n\neq0$.
\end{enumerate}
\end{Theorem}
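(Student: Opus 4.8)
The plan is to establish the two assertions in turn: part (1) is an intrinsic statement about the vertex algebra $\mathcal{W}^k(\mathfrak{gl}(ln),(l^n))$, and part (2) then follows by feeding (1) into the explicit description of $\Phi_1$ supplied by Theorem~\ref{Main}.

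For part (1), I would start from the standard strong generators $\{W^{(r)}_{i,j}\mid 1\le i,j\le n,\ 1\le r\le l\}$ attached to a basis of the centralizer $\mathfrak{g}^f\cong\mathfrak{gl}(n)\otimes\mathbb{C}[x]/(x^l)$; their Miura images extend the formulas of Theorem~\ref{Generators}, $W^{(r)}_{p,q}$ being the degree-$r$ coefficient (with the appropriate $\gamma_\bullet$-shifts) of a column-ordered product of the currents of $\bigotimes_{s}V^{\kappa_s}(\mathfrak{gl}(n))$. It then suffices to show, by induction on $r$, that $W^{(r+1)}_{p,q}$ belongs to the vertex subalgebra $\langle W^{(1)},W^{(2)}\rangle$. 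I would obtain this by a bootstrap: compute the operator product $W^{(2)}_{i,j}(z)\,W^{(r)}_{p,q}(w)$, either directly from the Miura realization or from the rectangular $\lambda$-brackets of \cite{Rap} (see also \cite{U8}), and read off that a fixed pole contributes $c_r\,W^{(r+1)}_{\bullet,\bullet}(w)$ plus a $\mathbb{C}[\partial]$-linear combination of normally ordered products of $W^{(1)},\dots,W^{(r)}$, where $c_r$ is a polynomial in $k$ whose unique root is $k=-(l-1)n$. Hence, when $k+(l-1)n\neq0$, the induction closes and $\langle W^{(1)},W^{(2)}\rangle=\mathcal{W}^k(\mathfrak{gl}(ln),(l^n))$. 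A dimension or character count alone cannot establish (1), since the vertex subalgebra generated by the weight-one and weight-two fields could a priori be any intermediate vertex algebra, so one really must produce $W^{(r)}$ for $r\ge3$ inside it via the operator product relations.

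For part (2), note that in the rectangular case the compatibility of Theorem~\ref{Main} specializes to $\widetilde\mu\circ\Phi_1=(\bigotimes_{s=1}^{l}\ev^{n,(l-s)\ve}_{\hbar,\ve})\circ\Delta^{(l)}$, where $\Delta^{(l)}$ is the $(l-1)$-fold iterate of the affine Yangian coproduct $\Delta^n$ and $\ve=\hbar(k+(l-1)n)$; thus $k+(l-1)n\neq0$ is precisely the condition $\ve\neq0$ under which each evaluation map has dense image (Theorem~\ref{thm:main}(2)). Since $\widetilde\mu$ is injective, the images under $\Phi_1$ of the generators of $Y_{\hbar,\ve}(\widehat{\mathfrak{sl}}(n))$ are exactly as in Theorem~\ref{Main}, and, by part (1) together with Definition~\ref{Defi}, $\mathcal{U}(\mathcal{W}^k(\mathfrak{gl}(ln),(l^n)))$ is topologically generated by $\{W^{(r)}_{i,j}t^s\mid r=1,2,\ 1\le i,j\le n,\ s\in\mathbb{Z}\}$, so it is enough to place each of these in $\overline{\mathrm{Im}(\Phi_1)}$. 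The currents $\Phi_1(X^{\pm}_{i,0})$ for $1\le i\le n-1$ are $W^{(1)}_{i,i+1},W^{(1)}_{i+1,i}$, and $\Phi_1(X^{\pm}_{0,0})$ their cyclic $t^{\pm1}$-shifts $W^{(1)}_{n,1}t,W^{(1)}_{1,n}t^{-1}$; iterated commutators recover every $W^{(1)}_{i,j}t^s$ with $i\neq j$ and every $(W^{(1)}_{i,i}-W^{(1)}_{i+1,i+1})t^s$. The trace (Heisenberg) currents $\sum_i W^{(1)}_{i,i}t^s$, invisible to the $\widehat{\mathfrak{sl}}(n)$-subalgebra, I would recover from the image of the imaginary-root combination $\Phi_1(\sum_{i\in\mathbb{Z}/n\mathbb{Z}}H_{i,1})$ (equivalently from $\Phi_1(H_{0,1})$, determined by the minimalistic relations), identifying this image on the evaluation-map side as $c\sum_i W^{(1)}_{i,i}t$ modulo elements already obtained, with $c$ a nonzero multiple of $\ve$; commuting with $\Phi_1(X^{\pm}_{0,0})$ produces all $t$-shifts. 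Finally $\Phi_1(H_{i,1})$ equals $-\hbar(W^{(2)}_{i,i}-W^{(2)}_{i+1,i+1})t$ plus the explicit normally ordered quadratic expressions in $W^{(1)}$-currents written in Theorem~\ref{Main}, all now in $\overline{\mathrm{Im}(\Phi_1)}$; hence $(W^{(2)}_{i,i}-W^{(2)}_{i+1,i+1})t\in\overline{\mathrm{Im}(\Phi_1)}$, and bracketing with the $W^{(1)}$-currents already obtained gives all off-diagonal $W^{(2)}_{i,j}t^s$ and all differences $(W^{(2)}_{i,i}-W^{(2)}_{j,j})t^s$; the remaining trace $\sum_i W^{(2)}_{i,i}t^s$ follows from $\Phi_1(\sum_i H_{i,2})$ by the same imaginary-root argument. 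This exhausts the generating set, so $\mathrm{Im}(\Phi_1)$ is dense.

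The main obstacle is, in part (1), pinning down the $k$-dependence of the structure constant $c_r$ and confirming that it vanishes only at $k=-(l-1)n$: this is the heaviest calculation and the precise place where the hypothesis is consumed. In part (2) the corresponding delicate point is the nonvanishing, again proportional to $\ve$, of the Heisenberg/trace component of the image of the imaginary-root element; the cleanest route to it is the explicit evaluation-map formula of Theorem~\ref{thm:main} fed through the compatibility relation $\widetilde\mu\circ\Phi_1=(\bigotimes_s\ev)\circ\Delta^{(l)}$, while everything else in (2) is formal bracket computation with the defining relations of $Y_{\hbar,\ve}(\widehat{\mathfrak{sl}}(n))$.
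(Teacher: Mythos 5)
This statement is imported verbatim from \cite{U4} (Theorems~3.17 and 5.1 there); the present paper gives no proof of it, so there is no internal argument to measure yours against. Judged on its own terms, your proposal follows what is in fact the standard route, and the one taken in the cited source: for (1), an OPE bootstrap in which $W^{(r+1)}_{\bullet,\bullet}$ is extracted from a fixed pole of $W^{(2)}_{i,j}(z)W^{(r)}_{p,q}(w)$ with a structure constant proportional to $k+(l-1)n$ (note that in the rectangular case all the $\alpha_s$ of Section~2 collapse to $k+(l-1)n$, which is why this is the only root that can appear); for (2), the compatibility $\widetilde{\mu}\circ\Phi_1=(\bigotimes_s\ev)\circ\Delta^{(l)}$ together with Kodera's density mechanism, the hypothesis $k+(l-1)n\neq0$ entering exactly as $\ve\neq0$ in Theorem~\ref{thm:main}(2). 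Your remark that a weight count cannot replace the inductive OPE computation in (1) is also correct.

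Two places in your sketch carry essentially all of the content and are not yet proofs. First, the claim that $c_r$ vanishes only at $k=-(l-1)n$ is precisely the computation that constitutes the proof of (1); as written it is an announcement of the answer rather than a derivation, and one must actually verify from the Miura images (or from the $\lambda$-brackets of \cite{Rap}) that no spurious factor appears for some $r$. Second, in (2) the recovery of the trace modes $\sum_i W^{(2)}_{i,i}t^s$ from $\Phi_1\bigl(\sum_i H_{i,2}\bigr)$ is the weakest link: $H_{i,2}$ is not among the minimalistic generators, its image is not specified by Theorem~\ref{Main}, and its evaluation involves cubic expressions whose reduction modulo the already-obtained elements must be carried out explicitly to see that the coefficient of the $W^{(2)}$-trace is a nonzero multiple of $\ve$. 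Everything else in your part (2) — the $\widehat{\mathfrak{sl}}(n)$-current part, the telescoping of the $W^{(2)}$-differences in the imaginary-root combination, the passage from part (1) to topological generation of $\mathcal{U}(\mathcal{W}^k(\mathfrak{gl}(ln),(l^n)))$ via Definition~\ref{Defi} — is routine and correctly organized.
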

Let us denote by $\widetilde{\mathcal{W}}^k(\mathfrak{gl}(b_1(n_1-n_2)),(b_1^{n_1-n_2}))$ the subalgebra of $\mathcal{W}^k(\mathfrak{gl}(N),f)$ generated by $\{W^{(r)}_{i,j}\mid r=1,2,1\leq i,j\leq n_1-n_2\}$.
The following theorem follows from Theorem~\ref{emb} and Theorem~\ref{rect}.
\begin{Lemma}\label{lem11}
\begin{enumerate}
\item The subalgebra $\widetilde{\mathcal{W}}^k(\mathfrak{gl}(2(n_1-n_2)),(2^{n_1-n_2}))$ is isomorphic to the rectangular $W$-algebra $\mathcal{W}^{k+n_2+\sum_{s=3}^{l}q_s}(\mathfrak{gl}(2(n_1-n_2)),(2^{n_1-n_2}))$.
\item In the case that $b_1=2$, the completion of the image of $\Phi_1$ coincides with the universal enveloping algebra of $\widetilde{\mathcal{W}}^k(\mathfrak{gl}(2(n_1-n_2)),(2^{n_1-n_2}))$.
\end{enumerate}
\end{Lemma}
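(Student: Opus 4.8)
The plan is to prove Lemma~\ref{lem11} by combining Theorem~\ref{emb} and Theorem~\ref{rect} in the obvious way, but being careful about what "the subalgebra generated by $\{W^{(r)}_{i,j}\}$" means and about passing between vertex algebras and their universal enveloping algebras. For part (1), I would first observe that Theorem~\ref{emb} produces an embedding $\iota\colon\mathcal{W}^{k+n_2+\sum_{s=3}^{l}q_s}(\mathfrak{gl}(2(n_1-n_2)),(2^{n_1-n_2}))\hookrightarrow\mathcal{W}^k(\mathfrak{gl}(N),f)$ sending the generators $\widetilde{W}^{(1)}_{i,j}\mapsto W^{(1)}_{i,j}$ and $\widetilde{W}^{(2)}_{i,j}\mapsto W^{(2)}_{i,j}+\gamma_1\partial W^{(1)}_{i,j}$. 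By Theorem~\ref{rect}(1) the source is strongly generated by $\{\widetilde{W}^{(r)}_{i,j}\mid r=1,2,\,1\leq i,j\leq n_1-n_2\}$ provided $(k+n_2+\sum_{s=3}^l q_s)+(2-1)(n_1-n_2)\neq0$, i.e. the relevant level is non-critical; note this shifted level equals $k+N-q_1$ when $b_1=2$, so the hypothesis is exactly $k+N-q_1\neq0$, which I should state. Since $W^{(2)}_{i,j}=\iota(\widetilde{W}^{(2)}_{i,j})-\gamma_1\partial W^{(1)}_{i,j}=\iota(\widetilde{W}^{(2)}_{i,j}-\gamma_1\partial\widetilde{W}^{(1)}_{i,j})$ lies in $\operatorname{Im}\iota$, the subalgebra $\widetilde{\mathcal{W}}^k(\mathfrak{gl}(2(n_1-n_2)),(2^{n_1-n_2}))$ generated by all $W^{(r)}_{i,j}$ is contained in $\operatorname{Im}\iota$; conversely $\operatorname{Im}\iota$ is generated by the images of $\widetilde{W}^{(r)}_{i,j}$, each of which is a polynomial in the $W^{(r)}_{i,j}$ and their derivatives, hence lies in $\widetilde{\mathcal{W}}^k(\mathfrak{gl}(2(n_1-n_2)),(2^{n_1-n_2}))$. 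Thus the two coincide, and $\iota$ restricts to the desired isomorphism.

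For part (2), the idea is that $\Phi_1$ factors through $\mathcal{U}$ of this subalgebra. Explicitly, every generator $\Phi((X^{\pm}_{i,0})^{(1)})$ and $\Phi((H_{i,1})^{(1)})$ appearing in Theorem~\ref{Main} is, by inspection of the formulas there (with $a=1$, so $n_1-n_a=0$), a (completed) noncommutative polynomial in the modes $W^{(1)}_{p,q}t^s$ with $1\leq p,q\leq n_1-n_2$ only --- crucially no $W^{(2)}$ appears in these particular formulas. Hence $\operatorname{Im}\Phi_1\subseteq\mathcal{U}(\widetilde{\mathcal{W}}^k(\mathfrak{gl}(2(n_1-n_2)),(2^{n_1-n_2})))$. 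For the reverse inclusion I would invoke the isomorphism from part (1) to identify $\widetilde{\mathcal{W}}^k(\mathfrak{gl}(2(n_1-n_2)),(2^{n_1-n_2}))$ with the rectangular $W$-algebra $\mathcal{W}^{k+n_2+\sum_{s=3}^l q_s}(\mathfrak{gl}(2(n_1-n_2)),(2^{n_1-n_2}))$, under which $\Phi_1$ becomes precisely the homomorphism of Theorem~\ref{rect}(2) (here one must match the spectral/level parameters: $\ve_1=\ve-(n_1-n_v)\hbar$ against the level $k+n_2+\sum_{s=3}^l q_s$ of the rectangular algebra via $\ve=\hbar(k+N-q_l)$, and check the $\gamma_1\partial W^{(1)}$ shift in $\iota$ is absorbed by the conventions in \cite{U4}). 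Then Theorem~\ref{rect}(2) says the image is dense, i.e. its (degreewise) completion is the full $\mathcal{U}$ of the rectangular $W$-algebra; transporting back gives that the completion of $\operatorname{Im}\Phi_1$ is $\mathcal{U}(\widetilde{\mathcal{W}}^k(\mathfrak{gl}(2(n_1-n_2)),(2^{n_1-n_2})))$, which is the claim.

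The routine-but-delicate points I would spell out are: (a) that "generated by" for the vertex subalgebra and "generated by" for $\mathcal{U}$ are compatible --- i.e. $\mathcal{U}$ of the subalgebra generated by a set $S$ equals the (completed) subalgebra of $\mathcal{U}$ generated by the modes of $S$, which follows from the definition of $\mathcal{U}$ via the Borcherds Lie algebra and the defining relations \eqref{241}; and (b) the precise identification of $\Phi_1$ with the map of Theorem~\ref{rect} under $\iota$, including the $\partial W^{(1)}$ correction term. The main obstacle will be (b): one must verify that applying $\iota^{-1}$ to the formulas for $\Phi((H_{i,1})^{(1)})$ in Theorem~\ref{Main} reproduces exactly the generator formulas used in \cite{U4} for the rectangular case, i.e. that the two conventions for the generators of the rectangular $W$-algebra (the $W^{(2)}$ here versus $\widetilde W^{(2)}$ there) are intertwined correctly by the shift $\widetilde W^{(2)}_{i,j}\mapsto W^{(2)}_{i,j}+\gamma_1\partial W^{(1)}_{i,j}$, and that since $\Phi_1$'s formulas only involve $W^{(1)}$ this shift is in fact invisible at the level of $\Phi_1$. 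Everything else is bookkeeping with levels and completions.
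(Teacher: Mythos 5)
Your overall route is the same as the paper's, which justifies this lemma simply by citing Theorem~\ref{emb} and Theorem~\ref{rect}. Your part (1) is a correct expansion of that one-liner, and you are right to flag that the non-criticality hypothesis $k+N-q_1\neq 0$ (equivalently $(k+n_2+\sum_{s=3}^{l}q_s)+(n_1-n_2)\neq 0$) is needed for Theorem~\ref{rect}(1) to apply, even though the lemma as stated omits it.

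Part (2), however, rests on a concrete misreading of Theorem~\ref{Main}. You assert that for $a=1$ the generator formulas involve only modes of $W^{(1)}_{p,q}$ and that ``crucially no $W^{(2)}$ appears.'' This is false: the first line of $\Phi((H_{i,1})^{(a)})$ is $-\hbar(W^{(2)}_{n_1-n_a+i,n_1-n_a+i}t-W^{(2)}_{n_1-n_a+i+1,n_1-n_a+i+1}t)$, so for $a=1$ the modes $W^{(2)}_{i,i}t$ and $W^{(2)}_{i+1,i+1}t$ do occur. The forward inclusion $\mathrm{Im}\,\Phi_1\subseteq\mathcal{U}(\widetilde{\mathcal{W}}^k(\mathfrak{gl}(2(n_1-n_2)),(2^{n_1-n_2})))$ survives anyway, since those $W^{(2)}_{p,q}$ with $1\leq p,q\leq n_1-n_2$ are by definition among the generators of $\widetilde{\mathcal{W}}^k$ --- but your stated reason is wrong, and, more seriously, your dismissal of the matching problem in your point (b) collapses: you argue the shift $\widetilde{W}^{(2)}_{i,j}\mapsto W^{(2)}_{i,j}+\gamma_1\partial W^{(1)}_{i,j}$ is ``invisible at the level of $\Phi_1$'' precisely because no $W^{(2)}$ appears. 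Since $W^{(2)}$ does appear, transporting $\Phi_1$ through $\iota^{-1}$ replaces $W^{(2)}_{i,j}t$ by $\widetilde{W}^{(2)}_{i,j}t+\gamma_1\widetilde{W}^{(1)}_{i,j}$ (using $(\partial v)t=-v$ in the Borcherds Lie algebra), which produces an extra term proportional to $\gamma_1\hbar\,(\widetilde{W}^{(1)}_{i,i}-\widetilde{W}^{(1)}_{i+1,i+1})$ in the image of $H_{i,1}$; one must then check that this shifted constant term reproduces exactly the evaluation-type formula of \cite{U4} at level $k+n_2+\sum_{s=3}^{l}q_s$ with the appropriate spectral parameter. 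That verification is the actual content of the reverse inclusion in part (2), and it is missing from your argument.
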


By Lemma~\ref{lem11}, Theorem~\ref{Com} and Theorem~\ref{Main}, we obtain the following theorem.
\begin{Theorem}
The homomorphism $\Phi_s$ the homomorphism $\Phi_s$ induces a homomorphism
\begin{equation*}
\widetilde{\Phi}_s\colon Y_{\hbar,\ve}(\widehat{\mathfrak{sl}}(n_s-n_{s+1}))\to C(\mathcal{U}(\mathcal{W}^k(\mathfrak{gl}(N),f)),\mathcal{U}(\mathcal{W}^{k+n_2+\sum_{s=3}^{l}q_s}(\mathfrak{gl}(2(n_1-n_2)),(2^{n_1-n_2})))),
\end{equation*}
where $C(\mathcal{U}(\mathcal{W}^k(\mathfrak{gl}(N),f)),\mathcal{U}(\mathcal{W}^{k+n_2+\sum_{s=3}^{l}q_s}(\mathfrak{gl}(2(n_1-n_2)),(2^{n_1-n_2}))))$ is the centralizer algebra of $\mathcal{U}(\mathcal{W}^k(\mathfrak{gl}(N),f))$ and $\mathcal{U}(\mathcal{W}^{k+n_2+\sum_{s=3}^{l}q_s}(\mathfrak{gl}(2(n_1-n_2)),(2^{n_1-n_2})))$.
\end{Theorem}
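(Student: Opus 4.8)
The plan is to deduce the statement from Theorem~\ref{Com} together with the description of the image of $\Phi_1$ furnished by Lemma~\ref{lem11}. First I would record that, by Theorem~\ref{Main}, each $\Phi_s$ is a well-defined algebra homomorphism, and that by Theorem~\ref{Com} the image of $\Phi_s$ commutes, inside $\mathcal{U}(\mathcal{W}^k(\mathfrak{gl}(N),f))$, with the image of $\Phi_1$ (for $s\neq 1$, which is the relevant range since $\mathcal{W}^{k+n_2+\sum_{s=3}^{l}q_s}(\mathfrak{gl}(2(n_1-n_2)),(2^{n_1-n_2}))$ is itself noncommutative). Next, by Lemma~\ref{lem11}~(2), in the case $b_1=2$ the standard degreewise closure of $\Ima\Phi_1$ is the universal enveloping algebra $\mathcal{U}(\widetilde{\mathcal{W}}^k(\mathfrak{gl}(2(n_1-n_2)),(2^{n_1-n_2})))$ of the subalgebra $\widetilde{\mathcal{W}}^k(\mathfrak{gl}(2(n_1-n_2)),(2^{n_1-n_2}))\subset\mathcal{W}^k(\mathfrak{gl}(N),f)$.

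The core of the argument is then to promote ``commutes with every element of $\Ima\Phi_1$'' to ``commutes with every element of its closure''. Here I would use that the multiplication on the standard degreewise completion $\mathcal{U}(\mathcal{W}^k(\mathfrak{gl}(N),f))$ is continuous, so that for a fixed $x$ the centralizer $\{y\mid xy=yx\}$ is a closed subspace; intersecting over all $x\in\Ima\Phi_1$ shows that the centralizer of $\Ima\Phi_1$ is a closed subalgebra, hence contains the closure $\mathcal{U}(\widetilde{\mathcal{W}}^k(\mathfrak{gl}(2(n_1-n_2)),(2^{n_1-n_2})))$ of $\Ima\Phi_1$. Consequently $\Ima\Phi_s$ is contained in the centralizer of $\mathcal{U}(\widetilde{\mathcal{W}}^k(\mathfrak{gl}(2(n_1-n_2)),(2^{n_1-n_2})))$ in $\mathcal{U}(\mathcal{W}^k(\mathfrak{gl}(N),f))$.

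Finally I would invoke Lemma~\ref{lem11}~(1): the isomorphism $\widetilde{\mathcal{W}}^k(\mathfrak{gl}(2(n_1-n_2)),(2^{n_1-n_2}))\cong\mathcal{W}^{k+n_2+\sum_{s=3}^{l}q_s}(\mathfrak{gl}(2(n_1-n_2)),(2^{n_1-n_2}))$ induces a corresponding identification of universal enveloping algebras as subalgebras of $\mathcal{U}(\mathcal{W}^k(\mathfrak{gl}(N),f))$, under which the centralizer obtained in the previous paragraph is exactly $C(\mathcal{U}(\mathcal{W}^k(\mathfrak{gl}(N),f)),\mathcal{U}(\mathcal{W}^{k+n_2+\sum_{s=3}^{l}q_s}(\mathfrak{gl}(2(n_1-n_2)),(2^{n_1-n_2}))))$; corestricting $\Phi_s$ along the inclusion of this centralizer then produces the desired $\widetilde{\Phi}_s$. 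I expect the genuinely delicate point to be the closure/continuity step of the second paragraph: one must check that the infinite sums representing general elements of $\mathcal{U}(\widetilde{\mathcal{W}}^k(\mathfrak{gl}(2(n_1-n_2)),(2^{n_1-n_2})))$ arise as limits of elements of $\Ima\Phi_1$ in a topology for which the commutator with any element of $\Ima\Phi_s$ passes to the limit — once this is set up carefully in the degreewise-completed setting, the remaining steps are purely formal bookkeeping of the identifications.
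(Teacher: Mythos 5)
Your proposal follows the same route the paper intends: the theorem is obtained by combining Theorem~\ref{Main}, Theorem~\ref{Com} and Lemma~\ref{lem11}, and your write-up supplies the continuity and closure details that the paper leaves implicit. One deduction, however, is misstated. From the fact that the centralizer of $\Ima\Phi_1$ is closed you cannot conclude that it contains the closure of $\Ima\Phi_1$: a set's centralizer contains the set itself only when the set is commutative, and $\Ima\Phi_1$ is not (its closure is the universal enveloping algebra of a rectangular $W$-algebra). The step you need is the one with the roles of the two factors exchanged: fix $y\in\Ima\Phi_s$ (for $s\neq1$, as you correctly note) and observe that $\{z\in\mathcal{U}(\mathcal{W}^k(\mathfrak{gl}(N),f))\mid zy=yz\}$ is closed in the degreewise topology and contains $\Ima\Phi_1$ by Theorem~\ref{Com}, hence contains its closure, which by Lemma~\ref{lem11}~(2) is $\mathcal{U}(\widetilde{\mathcal{W}}^k(\mathfrak{gl}(2(n_1-n_2)),(2^{n_1-n_2})))$. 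With that one-line repair the remainder of your argument --- the identification via Lemma~\ref{lem11}~(1) and the corestriction of $\Phi_s$ to the centralizer --- goes through and matches the paper's derivation. You should also make explicit that Lemma~\ref{lem11}~(2) rests on the density statement of Theorem~\ref{rect} and therefore carries a non-criticality hypothesis on the level, which is tacitly assumed in the statement being proved.
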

By the same way as Theorem~6.5 in \cite{U10}, we obtain the following theorem.
\begin{Theorem}\label{rrr}
The homomorphism $\Phi_s$ the homomorphism $\Phi_s$ induces a homomorphism
\begin{equation*}
\widetilde{\Phi}_s\colon Y_{\hbar,\ve}(\widehat{\mathfrak{sl}}(n_s-n_{s+1}))\to\mathcal{U}(C(\mathcal{W}^k(\mathfrak{gl}(N),f),\mathcal{W}^{k+n_2+\sum_{s=3}^{l}q_s}(\mathfrak{sl}(2(n_1-n_2)),(2^{n_1-n_2})))).
\end{equation*}
\end{Theorem}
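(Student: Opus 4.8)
The plan is to follow the argument of Theorem~6.5 in \cite{U10}. The theorem stated just before already places the image of $\widetilde{\Phi}_s$ in the centralizer of $\mathcal{U}(\widetilde{\mathcal{W}}^k(\mathfrak{gl}(2(n_1-n_2)),(2^{n_1-n_2})))$ inside $\mathcal{U}(\mathcal{W}^k(\mathfrak{gl}(N),f))$. Since, by Lemma~\ref{lem11}, the vertex subalgebra $\mathcal{W}^{k+n_2+\sum_{s=3}^{l}q_s}(\mathfrak{sl}(2(n_1-n_2)),(2^{n_1-n_2}))$ sits inside $\widetilde{\mathcal{W}}^k(\mathfrak{gl}(2(n_1-n_2)),(2^{n_1-n_2}))$ — it is the vertex subalgebra complementary to the rank-one Heisenberg vertex subalgebra generated by $\sum_{i=1}^{n_1-n_2}W^{(1)}_{i,i}$ — the image of $\widetilde{\Phi}_s$ in particular commutes with every mode of every element of $\mathcal{W}^{k+n_2+\sum_{s=3}^{l}q_s}(\mathfrak{sl}(2(n_1-n_2)),(2^{n_1-n_2}))$. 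What remains is to promote this commutation property to an actual factorization through $\mathcal{U}$ of the coset vertex algebra, and the mechanism is to exhibit the generators of the image as modes of vectors which genuinely lie in that coset.

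By Remark~\ref{remark} the source $Y_{\hbar,\ve}(\widehat{\mathfrak{sl}}(n_s-n_{s+1}))$ is topologically generated by $X^\pm_{i,0}$ and $H_{i,1}$, so it is enough to show that $\Phi_s(X^\pm_{i,0})$ and $\Phi_s(H_{i,1})$ belong to the image of $\mathcal{U}(C(\mathcal{W}^k(\mathfrak{gl}(N),f),\mathcal{W}^{k+n_2+\sum_{s=3}^{l}q_s}(\mathfrak{sl}(2(n_1-n_2)),(2^{n_1-n_2}))))$. Inspecting the defining formulas in Theorem~\ref{Main}, one sees that (for $s\ge2$) these elements are expressions built only from the modes of the vectors $W^{(1)}_{p,q}$ and $W^{(2)}_{p,q}$ whose indices $p,q$ exceed $n_1-n_2$, i.e.\ lie in the blocks disjoint from the rectangular block $\{1,\dots,n_1-n_2\}$, the sums over modes converging in the degreewise topology. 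Since $\mathcal{U}(V)$ contains the modes of the vectors of $V$ together with their degreewise convergent products, it therefore suffices to establish the key claim: for $p,q>n_1-n_2$ and $r=1,2$ the vector $W^{(r)}_{p,q}$ belongs to $C(\mathcal{W}^k(\mathfrak{gl}(N),f),\mathcal{W}^{k+n_2+\sum_{s=3}^{l}q_s}(\mathfrak{sl}(2(n_1-n_2)),(2^{n_1-n_2})))$.

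To prove the key claim one computes the operator product expansions of such $W^{(r)}_{p,q}$ with the generators of $\mathcal{W}^{k+n_2+\sum_{s=3}^{l}q_s}(\mathfrak{sl}(2(n_1-n_2)),(2^{n_1-n_2}))$, namely the trace-free combinations of $\{W^{(r')}_{i,j}\mid r'=1,2,\ 1\le i,j\le n_1-n_2\}$, using the explicit shape of these generators recorded in Theorem~\ref{Generators} together with the known OPEs of rectangular $W$-algebras (\cite{Rap}, \cite{U8}) and those of \cite{U5}. Because the $\mathfrak{gl}$-weights carried by $W^{(r)}_{p,q}$ with $p,q>n_1-n_2$ and by the generators $W^{(r')}_{i,j}$ with $i,j\le n_1-n_2$ have disjoint support, every term of these OPEs that is linear in the $W$-generators, as well as every normally-ordered quadratic term, vanishes; what can survive is only the scalar ("level"-type) term in the OPE with $W^{(2)}_{i,i}$, and this term does not depend on $i$, hence is annihilated exactly upon passing from $W^{(2)}_{i,i}$ to its trace-free part. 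This is precisely why the coset in the statement must be formed with $\mathfrak{sl}(2(n_1-n_2))$ rather than $\mathfrak{gl}(2(n_1-n_2))$, and it is where the hypothesis $b_1=2$ is used — via Lemma~\ref{lem11} it guarantees that the subalgebra in question is an honest rectangular $W$-algebra whose OPEs are available. I expect this OPE bookkeeping — in particular the verification that the surviving scalar terms of $W^{(r)}_{p,q}(z)W^{(2)}_{i,j}(w)$ depend on $i,j$ only through $\delta_{i,j}$ — to be the main obstacle; once it is done, no further completion issues arise and the theorem follows by combining the key claim with the commutation property recorded in the first paragraph.
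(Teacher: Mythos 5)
The paper itself offers no argument here beyond the pointer to Theorem~6.5 of \cite{U10}, so I am judging your proposal on its own terms, and I think the central step fails as stated. Your key claim is that $W^{(r)}_{p,q}$ lies in the coset for all $p,q>n_1-n_2$ and $r=1,2$, justified by ``disjoint weight support.'' Weight considerations only control the zeroth product; they say nothing about the first product (second--order pole), where the actual danger lies. Concretely, using the Miura form in Theorem~\ref{Generators}, the double contraction of $W^{(1)}_{i,j}=\sum_{r\le b_1}E^{(r)}_{i,j}[-1]$ against the quadratic part $-\sum_{r_1\ge r_2}E^{(r_1)}_{u,q}[-1]E^{(r_2)}_{p,u}[-1]$ of $W^{(2)}_{p,q}$ (the terms with $r_1=r_2$ and $u=j$) produces
\begin{equation*}
\bigl(W^{(1)}_{i,j}\bigr)_{(1)}W^{(2)}_{p,q}\;=\;-\,\delta_{p,q}\,W^{(1)}_{i,j}\;+\;\delta_{i,j}\cdot(\text{field})\,,
\end{equation*}
and symmetrically $\bigl(W^{(2)}_{i,j}\bigr)_{(1)}W^{(1)}_{p,p}$ contains a nonzero multiple of $W^{(1)}_{i,j}$. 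These terms are linear in the generators, survive for $i\neq j$, and are therefore \emph{not} removed by passing to the trace--free ($\mathfrak{sl}$) part in the index pair $(i,j)$; they could at best be cancelled by the Heisenberg corrections that distinguish the actual weight--two generators of $\mathcal{W}^{k'}(\mathfrak{sl}(2(n_1-n_2)),(2^{n_1-n_2}))$ from the naive trace--free $W^{(2)}_{i,j}$, a computation your sketch does not address. So the diagonal vectors $W^{(1)}_{p,p}$ and $W^{(2)}_{p,p}$ are not individually in the coset by your argument; only differences such as $W^{(2)}_{p,p}-W^{(2)}_{p+1,p+1}$ are, since the offending terms are independent of $p$. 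This matters because the formula for $\Phi_s(H_{i,1})$ in Theorem~\ref{Main} does not involve the diagonal elements only through such differences: it contains $\hbar\,W^{(1)}_{p,p}W^{(1)}_{p+1,p+1}$ and $\hbar\sum_{s\ge0}W^{(1)}_{p,p}t^{-s}W^{(1)}_{p,p}t^{s}$, so even the corrected key claim does not immediately express the generators of the image as polynomials in modes of coset vectors.

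A second, related issue is that your reduction quietly replaces the route the paper actually indicates. The preceding theorem already places the image in the centralizer of $\mathcal{U}(\widetilde{\mathcal{W}}^k(\mathfrak{gl}(2(n_1-n_2)),(2^{n_1-n_2})))$, and the argument of Theorem~6.5 in \cite{U10} upgrades a centralizer statement to a $\mathcal{U}(C(A,B))$ statement structurally: one uses the factorization of the rectangular $\mathfrak{gl}$--subalgebra into a Heisenberg times its $\mathfrak{sl}$--part together with the density statement of Theorem~\ref{rect} (which is where the non--criticality hypothesis enters) to identify the completed centralizer with the universal enveloping algebra of the coset, rather than checking generator by generator that specific vectors are annihilated by all nonnegative modes. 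Your first paragraph gestures at this but then abandons it for the mode--by--mode verification, which, for the reasons above, does not go through without substantial additional cancellation arguments. I would either carry out the OPE bookkeeping honestly for the precise combinations occurring in $\Phi_s(H_{i,1})$ (including the Heisenberg corrections in the $\mathfrak{sl}$ generators), or restructure the proof around the centralizer--equals--$\mathcal{U}(\text{coset})$ mechanism.
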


In the appendix, we will consider the case that $b_1>2$.
\appendix
\section{Elements of $\mathcal{W}^k(\mathfrak{gl}(N),f)$}
In this section, we construct elements of $\mathcal{W}^k(\mathfrak{gl}(N),f)$
which are correpponding to Arkawa-Molev's strong generators of rectangular $W$-algebras.

We consider two universal affine vertex algebras. The first one is associated with a Lie algebra
\begin{align*}
\mathfrak{b}&=\bigoplus_{\substack{1\leq i,j\leq N\\\col(i)\geq\col(j)}}\limits \mathbb{C}E_{i,j}\subset\mathfrak{gl}(N)
\end{align*}
and its inner product
\begin{equation*}
\kappa_{\mathfrak{b}}(E_{i,j},E_{p,q})=\alpha_{\col(i)}\delta_{i,q}\delta_{p,j}+\delta_{i,j}\delta_{p,q}.
\end{equation*}

The second one is the universal affine vertex algebra associated with a Lie superalgebra $\mathfrak{a}=\mathfrak{b}\oplus\displaystyle\bigoplus_{\substack{1\leq i,j\leq N\\\col(i)>\col(j)}}\limits\mathbb{C}\psi_{i,j}$  with the following commutator relations;
\begin{align*}
[E_{i,j},\psi_{p,q}]&=\delta_{j,p}\psi_{i,q}-\delta_{i,q}\psi_{p,j},\ 
[\psi_{i,j},\psi_{p,q}]=0,
\end{align*}
where $e_{i,j}$ is an even element and $\psi_{i,j}$ is an odd element.
We set the inner product on $\mathfrak{a}$ such that
\begin{gather*}
\widetilde{\kappa}_{\mathfrak{b}}(E_{i,j},E_{p,q})=\kappa_{\mathfrak{b}}(E_{i,j},E_{p,q}),\qquad\widetilde{\kappa}_{\mathfrak{b}}(E_{i,j},\psi_{p,q})=\widetilde{\kappa}_{\mathfrak{b}}(\psi_{i,j},\psi_{p,q})=0.
\end{gather*}
By the definition of $V^{\widetilde{\kappa}}(\mathfrak{a})$ and $V^\kappa(\mathfrak{b})$, $V^{\widetilde{\kappa}}(\mathfrak{a})$ contains $V^\kappa(\mathfrak{b})$.

In this section, we regard $V^{\widetilde{\kappa}}(\mathfrak{a})$ (resp.\ $V^\kappa(\mathfrak{b})$ and $V^{\widetilde{\kappa}}(\mathfrak{a})\otimes\mathbb{C}[\tau]$) as a non-associative superalgebra whose product $\cdot$ is defined by
\begin{equation*}
u[-w]\cdot v[-s]=(u[-w])_{(-1)}v[-s],\ [\tau,u[-s]]=su[-s],
\end{equation*}
where $\tau$ is an even element.
We sometimes omit $\cdot$ and in order to simplify the notation. By \cite{KW1} and \cite{KW2}, a $W$-algebra $\mathcal{W}^k(\mathfrak{gl}(N),f)$ can be realized as a subalgebra of $V^\kappa(\mathfrak{b})$.

Let us define an odd differential $d_0 \colon V^{\kappa}(\mathfrak{b})\to V^{\widetilde{\kappa}}(\mathfrak{a})$ determined by
\begin{gather}
d_01=0,\\
[d_0,\partial]=0,\label{ee5800}
\end{gather}
\begin{align}
[d_0,e_{i,j}[-1]]
&=\sum_{\substack{\col(i)>\col(r)\geq\col(j)}}\limits e_{r,j}[-1]\psi_{i,r}[-1]-\sum_{\substack{\col(j)<\col(r)\leq\col(i)}}\limits \psi_{r,j}[-1]e_{i,r}[-1]\nonumber\\
&\quad+\delta(\col(i)>\col(j))\alpha_{\col(i)}\psi_{i,j}[-2]+\psi_{\hat{i},j}[-1]-\psi_{i,\tilde{j}}[-1].\label{ee1}
\end{align}
By using Theorem 2.4 in \cite{KRW}, we can define the $W$-algebra $\mathcal{W}^k(\mathfrak{gl}(N),f)$ as follows.
\begin{Definition}\label{T125}
The $W$-algebra $\mathcal{W}^k(\mathfrak{gl}(N),f)$ is the vertex subalgebra of $V^\kappa(\mathfrak{b})$ defined by
\begin{equation*}
\mathcal{W}^k(\mathfrak{gl}(N),f)=\{y\in V^\kappa(\mathfrak{b})\subset V^{\widetilde{\kappa}}(\mathfrak{a})\mid d_0(y)=0\}.
\end{equation*}
\end{Definition}
\begin{Remark}
The projection from $\mathfrak{b}$ to $\bigotimes_{1\leq i\leq l}\mathfrak{gl}(q_i)$ induces the Miura map $\mu$, that is, an embedding from $\mathcal{W}^k(\mathfrak{gl}(N),f)$ to $\bigotimes_{1\leq i\leq l}V^{\kappa_i}(\mathfrak{gl}(q_i))$. 
\end{Remark}
By using Definition~\ref{T125}, we construct some elements of $\mathcal{W}^k(\mathfrak{gl}(N),f)$. 

We denote by $T(C)$ a non-associative free algebra associated with a vector space $C$ and by $\mathfrak{gl}(l)_{\leq0}$ the Lie algebra $\displaystyle\bigoplus_{\substack{1\leq j\leq i\leq l}}\limits\mathbb{C}E_{i,j}$.
Let us set an $q_1\times q_1$ matrix $B=(b_{i,j})_{1\leq i,j\leq q_1}$ as
\begin{equation}
\begin{bmatrix}
\alpha_1\tau+E_{1,1}[-1] &-1\phantom{-}&0&\dots & 0\\[0.4em]
E_{2,1}[-1] &\alpha_1\tss\tau+E_{2,2}[-1] &-1\phantom{-}&\dots & 0\\[0.4em]
\vdots &\vdots &\ddots & &\vdots\\[0.4em]
E_{l-1,1}[-1] &E_{l-1,2}[-1] &\dots
&\alpha_1\tss\tau+E_{l-1,l-1}[-1] &-1\phantom{-}\\[0.4em]
E_{l, 1}[-1] &E_{l, 2}[-1] &\dots &E_{l,l-1}[-1]  &\pi\tss\tau+E_{l,l}[-1]
\end{bmatrix}\label{matrx}
\end{equation}
whose entries are elements of $T(\mathfrak{gl}(l)_{\leq0}[t^{-1}]t^{-1})\otimes \mathbb{C}[\tau]$. 
For any matrix $A=(a_{i,j})_{1\leq i,j\leq s}$, we define $\text{cdet}(A)$ as 
\begin{equation*}
\displaystyle\sum_{\sigma\in\mathfrak{S}_s}\limits\text{sgn}(\sigma)a_{\sigma(1),1}\big(a_{\sigma(2),2}(a_{\sigma(3),3}\cdots a_{\sigma(s-1),s-1})a_{\sigma(s),s}\big)\in T(\mathfrak{gl}(l)_{\leq0}[t^{-1}]t^{-1})\otimes\mathbb{C}[\tau].
\end{equation*}
We regard $\mathfrak{gl}(q_1)$ as an associative superalgebra whose product $\cdot$ is determined by $E_{i,j}\cdot E_{s,u}=\delta_{j,s}E_{i,u}$.
Then, we obtain a non-associative superalgebra $\mathfrak{gl}(q_1)\otimes V^\kappa(\mathfrak{b})\otimes\mathbb{C}[\tau]$.
We construct a homomorphism 
\begin{equation*}
T\colon T(\mathfrak{gl}(l)_{\leq0}[t^{-1}]t^{-1})\otimes\mathbb{C}[\tau]\to \mathfrak{gl}(m|n)\otimes V^\kappa(\mathfrak{b})\otimes\mathbb{C}[\tau]
\end{equation*}
determined by
\begin{gather*}
T_{i,j}(E_{u,v}[-s])=E_{(u-1)q_1+i,(v-1)q_1+j}[-s]\in\mathfrak{b}[t^{-1}]t^{-1},\quad T(\tau)=\tau,
\end{gather*}
where $T_{i,j}(x)$ is defined as $E_{j,i}\otimes T_{i,j}(x)=T(x)$. Since $T$ is a homomorphism, we obtain
\begin{gather*}
T_{i,j}(xy)=\sum_{r=1}^{q_1}T_{r,i}(x)T_{j,r}(y).
\end{gather*}
By the commutator relation of $V^\kappa(\mathfrak{b})$ and $\mathbb{C}[\tau]$, we define $\widetilde{W}^{(r)}_{i,j}\in V^{\kappa_{\mathfrak{b}}}(\mathfrak{b})$ is defined by
\begin{equation}
T_{j,i}(\text{cdet}(B))=\sum_{r=0}^l\limits \widetilde{W}^{(r)}_{j,i}(\alpha_1\tau)^{l-r}.\label{5261}
\end{equation}
\begin{Theorem}\label{gen}
The $W$-algebra $\mathcal{W}^k(\mathfrak{gl}(N),f)$ contains $\{\widetilde{W}^{(r)}_{i,j}\mid1\leq r\leq l,1\leq i,j\leq n_1-n_2\}$.
\end{Theorem}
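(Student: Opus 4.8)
The plan is to show that the column-determinant element $T_{j,i}(\operatorname{cdet}(B))$, after extracting its coefficients $\widetilde{W}^{(r)}_{j,i}$ via \eqref{5261}, lies in the kernel of the odd differential $d_0$ of Definition~\ref{T125}; by that definition this is exactly the condition for membership in $\mathcal{W}^k(\mathfrak{gl}(N),f)$. Since $d_0$ is an odd derivation for the non-associative product $\cdot$ on $V^\kappa(\mathfrak{b})$ (and more generally on $V^\kappa(\mathfrak{b})\otimes\mathbb{C}[\tau]$, with $\tau$ treated as a $d_0$-closed even element by \eqref{ee5800}), and since $\operatorname{cdet}(B)$ is built from the entries $b_{u,v}$ by iterated products and a signed sum, it suffices to compute $d_0$ applied to each entry $b_{u,v}=\alpha_1\tau\,\delta_{u,v}+E_{u,v}[-1]$ (and to $\pi\tau+E_{l,l}[-1]$ in the last slot, where $\pi$ absorbs the level correction in the final column), then to show that the resulting expression telescopes against the derivation property of $d_0$ on the iterated column-determinant product. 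Concretely, I would first record from \eqref{ee1} that $d_0\bigl(T_{a,b}(E_{u,v}[-1])\bigr)$ — i.e.\ $d_0(E_{(u-1)q_1+a,\,(v-1)q_1+b}[-1])$ — is a sum of $\psi$-terms: the ``vertical'' terms $\sum_{r}e_{r,\bullet}[-1]\psi_{\bullet,r}[-1]$, the level term $\alpha_{\col}\psi[-2]$, and the two ``shift'' terms $\psi_{\hat\imath,j}[-1]-\psi_{i,\tilde\jmath}[-1]$ coming from $f$. The key observation is that the $-1$ entries on the superdiagonal of $B$ and the $\alpha_1\tau$ (resp.\ $\pi\tau$) on the diagonal are precisely engineered so that the level term $\delta(\col(i)>\col(j))\alpha_{\col(i)}\psi_{i,j}[-2]$ and the shift terms $\psi_{\hat\imath,j}[-1]-\psi_{i,\tilde\jmath}[-1]$ produced when $d_0$ hits an $E_{u,v}[-1]$ in the determinant get cancelled by the contributions where $d_0$ hits a neighbouring row's entry — a column-to-column telescoping along the tridiagonal-plus-lower structure of $B$.

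The main technical step is therefore the following identity at the level of the free algebra $T(\mathfrak{gl}(l)_{\leq0}[t^{-1}]t^{-1})\otimes\mathbb{C}[\tau]$ mapped into $\mathfrak{gl}(q_1)\otimes V^{\widetilde\kappa}(\mathfrak{a})\otimes\mathbb{C}[\tau]$: writing $\widetilde{d_0}$ for the induced odd derivation on the matrix algebra (acting entrywise through $T$), one shows $\widetilde{d_0}\bigl(\operatorname{cdet}(B)\bigr)=0$. I would prove this by the standard Manin/quantum-minor manipulation: replace a row of $B$ by its $d_0$-image, expand $\operatorname{cdet}$ along that row, and use the quasi-determinant / Muir-type row relations (which hold for column-determinants of matrices with entries in a non-commutative ring satisfying the relevant ``Manin'' commutation relations — here supplied by the OPEs \eqref{OPE1} of $V^\kappa(\mathfrak{b})$) to collapse the sum. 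The $\psi$-valued terms organize into a single row-operation applied to $\operatorname{cdet}(B)$ that produces a matrix with two equal columns (equivalently, a determinant with a repeated row after the shift $E_{i,j}\mapsto E_{\hat\imath,j}$ or $E_{i,\tilde\jmath}$ lands in an adjacent $q_1$-block), hence vanishes; the $\alpha_1\tau$ and $\pi\tau$ terms handle the $\psi[-2]$ level contributions because differentiating $\alpha_{\col}\tau$ in the appropriate slot reproduces exactly the coefficient $\alpha_{\col(i)}$ needed. This is essentially the computation already carried out for the rectangular case in Arakawa--Molev \cite{AM} and in \cite{U4}; the content here is that the same Capelli-type column-determinant is annihilated by $d_0$ for the non-rectangular nilpotent $f$, because $d_0$ only ever moves an index by one column-block, which is invisible to the leading $(n_1-n_2)\times(n_1-n_2)$ block of indices $1\le i,j\le n_1-n_2$ that we restrict to.

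Having established $\widetilde{d_0}(\operatorname{cdet}(B))=0$, I extract the $(\alpha_1\tau)^{l-r}$-coefficients: since $d_0$ commutes with $\partial$ and kills $\tau$, and $\alpha_1\tau$ appears with nonzero scalar $\alpha_1$ (for generic $k$; the boundary cases follow by a polynomiality-in-$k$ argument since everything is polynomial in $k$ and the statement is a Zariski-closed condition), vanishing of the total expression forces $d_0(\widetilde{W}^{(r)}_{j,i})=0$ for every $r$ and every $1\le i,j\le n_1-n_2$. By Definition~\ref{T125} this places all these elements in $\mathcal{W}^k(\mathfrak{gl}(N),f)$, which is the claim. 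The hard part will be bookkeeping in the telescoping identity of the previous paragraph — keeping track of signs in $\operatorname{sgn}(\sigma)$, of the non-associative bracketing in the definition of $\operatorname{cdet}$, and of which of the four types of $\psi$-terms in \eqref{ee1} cancels against which neighbouring-row contribution — rather than any conceptual obstruction; once one sets up the row-operation picture correctly the vanishing is forced by a repeated-column argument.
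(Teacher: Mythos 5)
Your proposal follows essentially the same route as the paper, which simply invokes the Arakawa--Molev computation: one shows that the column determinant $\mathrm{cdet}(B)$ is annihilated by the odd derivation $d_0$ via the telescoping/repeated-row cancellation, and then extracts the coefficients of $(\alpha_1\tau)^{l-r}$, the point being that for $1\leq i,j\leq n_1-n_2$ the relevant rows of the pyramid only meet the first $b_1$ columns, so the situation is effectively rectangular. Your sketch correctly identifies both the key cancellation mechanism and the role of the index restriction, so it matches the paper's (citation-level) argument.
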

The proof of Theorem~\ref{gen} is as the one of Arakawa-Molev \cite{AM} (see also Theorem~3.11 and Remark~3.9) since we take $1\leq i,j\leq n_1-n_2$.

In particular, by \eqref{5261}, we have
\begin{align}
\mu(\widetilde{W}^{(1)}_{i,j})&=\displaystyle\sum_{1\leq s\leq b_1}\limits e_{(s-1)q_1+i,(s-1)q_1+j}[-1],\label{W1}\\
\mu(\widetilde{W}^{(2)}_{i,j})&=\alpha_1\displaystyle\sum_{1\leq s\leq b_1}\limits (s-1)e_{(s-1)q_1+i,(s-1)q_1+j}[-2]\nonumber\\
&\quad+\displaystyle\sum_{\substack{r_1<r_2\\1\leq t\leq m+n}}\limits{(-1)}^{p(t)+p(e_{i,t})p(e_{j,t})} e^{(r_1)}_{t,i}[-1]e^{(r_2)}_{j,t}[-1].\label{W2}
\end{align}

By the form of $W^{(1)}_{i,j}$ and $W^{(2)}_{i,j}$, we find that the OPEs of $\{W^{(r)}_{i,j}\mid r=1,2,1\leq i,j\leq n_1-n_2\}$ in $\mathcal{W}^k(\mathfrak{gl}(N),f)$ is the same as those of
\begin{equation}
\{\widetilde{W}^{(1)}_{i,j},W^{(2)}_{i,j}-\gamma_{1}\partial W^{(1)}_{i,j}\mid r=1,2,1\leq i,j\leq n_1-n_2\}\label{OPE}
\end{equation}
in $\mathcal{W}^{k+\sum_{s=3}^{l}q_s}(\mathfrak{gl}(q_1b_1),(b_1^{q_1}))$. Similarly to Theorem~\ref{rrr}, we find the following theorem.
\begin{Theorem}
The homomorphism $\Phi_s$ the homomorphism $\Phi_s$ induces a homomorphism
\begin{equation*}
\Phi\colon Y_{\hbar,\ve}(\widehat{\mathfrak{sl}}(n_s-n_{s+1}))\to C(\mathcal{U}(\mathcal{W}^k(\mathfrak{gl}(N),f)),\mathcal{U}(\widetilde{\mathcal{W}}^k(\mathfrak{gl}(b_1(n_1-n_2)),(b_1^{n_1-n_2}))),
\end{equation*}
where $C(\mathcal{U}(\mathcal{W}^k(\mathfrak{gl}(N),f)),\mathcal{U}(\widetilde{\mathcal{W}}^k(\mathfrak{gl}(b_1(n_1-n_2)),(b_1^{n_1-n_2})))$ is the centralizer of $\mathcal{U}(\mathcal{W}^k(\mathfrak{gl}(N),f))$ and $\mathcal{U}(\widetilde{\mathcal{W}}^k(\mathfrak{gl}(b_1(n_1-n_2)),(b_1^{n_1-n_2})))$.
\end{Theorem}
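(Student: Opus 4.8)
The plan is to reduce the statement, exactly as for $b_1=2$ (Theorem~\ref{rrr}), to the mutual commutativity of the maps $\Phi_s$ recorded in Theorem~\ref{Com} together with a density statement for $\Phi_1$. Fix an index $s\neq 1$ (for $s=1$ the target centralizer would contain the noncommutative algebra which is the image of $\Phi_1$ itself, so the statement is only meaningful for $s\neq 1$); we must show that the image of $\Phi_s$ commutes with all of $\mathcal{U}(\widetilde{\mathcal{W}}^k(\mathfrak{gl}(b_1(n_1-n_2)),(b_1^{n_1-n_2})))$.

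First I would cut the problem down to generators on both sides. By definition $\widetilde{\mathcal{W}}^k(\mathfrak{gl}(b_1(n_1-n_2)),(b_1^{n_1-n_2}))$ is the vertex subalgebra of $\mathcal{W}^k(\mathfrak{gl}(N),f)$ generated by the finitely many fields $W^{(r)}_{i,j}$ with $r=1,2$ and $1\leq i,j\leq n_1-n_2$, so its universal enveloping algebra is the degreewise closure of the subalgebra generated by the modes $W^{(r)}_{i,j}t^a$ ($r=1,2$, $1\leq i,j\leq n_1-n_2$, $a\in\mathbb{Z}$). On the other side, by Remark~\ref{remark} it suffices to check that these modes commute with the images of $(X^{\pm}_{j,0})^{(s)}$ and $(H_{i,1})^{(s)}$ under $\Phi_s$, which generate the image of $\Phi_s$. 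Thus the theorem reduces to showing that each mode $W^{(r)}_{i,j}t^a$ ($r=1,2$, $1\leq i,j\leq n_1-n_2$) commutes with the image of $\Phi_s$.

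Next I would invoke Theorem~\ref{Com}: the images of $\Phi_1$ and $\Phi_s$ commute with one another. Moreover, reading off the formulas of Theorem~\ref{Main} with $a=1$, every $W$-field that appears in $\Phi_1((X^{\pm}_{j,0})^{(1)})$ and $\Phi_1((H_{i,1})^{(1)})$ has both indices in $\{1,\dots,n_1-n_2\}$, so the image of $\Phi_1$ lies in $\mathcal{U}(\widetilde{\mathcal{W}}^k(\mathfrak{gl}(b_1(n_1-n_2)),(b_1^{n_1-n_2})))$. Hence it is enough to prove that the image of $\Phi_1$ is \emph{dense} there: then every generating mode $W^{(r)}_{i,j}t^a$ lies in the closure of the image of $\Phi_1$ and therefore commutes with the image of $\Phi_s$. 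For this density I would argue as in Lemma~\ref{lem11}: by the explicit forms \eqref{W1}--\eqref{W2} of $\mu(\widetilde{W}^{(1)}_{i,j})$ and $\mu(\widetilde{W}^{(2)}_{i,j})$ and the OPE comparison around \eqref{OPE}, the fields $\{W^{(1)}_{i,j},W^{(2)}_{i,j}\mid 1\leq i,j\leq n_1-n_2\}$ have, inside $\mathcal{W}^k(\mathfrak{gl}(N),f)$, the same OPEs as the corresponding fields of the rectangular $W$-algebra $\mathcal{W}^{k'}(\mathfrak{gl}(b_1(n_1-n_2)),(b_1^{n_1-n_2}))$ for the shifted level $k'=k+n_2+\sum_{s=3}^{l}q_s$; invoking Theorem~\ref{rect}(1), that $W^{(1)},W^{(2)}$ generate such a rectangular $W$-algebra at non-critical level, this identifies $\widetilde{\mathcal{W}}^k(\mathfrak{gl}(b_1(n_1-n_2)),(b_1^{n_1-n_2}))$ with $\mathcal{W}^{k'}(\mathfrak{gl}(b_1(n_1-n_2)),(b_1^{n_1-n_2}))$, extending Theorem~\ref{emb} and Lemma~\ref{lem11} from $b_1=2$ to arbitrary $b_1$. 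Under this identification $\Phi_1$ is the homomorphism of Theorem~\ref{rect}(2) (it is given by the same formulas), whose image is dense; this completes the reduction and produces the induced homomorphism $\Phi$ into the centralizer.

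The main obstacle is precisely this density step for arbitrary $b_1$. Two ingredients are needed that are not yet in place in full generality: that $W^{(1)},W^{(2)}$ generate the full rectangular $W$-algebra $\mathcal{W}^{k'}(\mathfrak{gl}(b_1(n_1-n_2)),(b_1^{n_1-n_2}))$ --- which is Theorem~\ref{rect}(1) but comes with a non-critical-level hypothesis tacit in the present statement --- and that the OPE comparison of the appendix, currently carried out explicitly only for $b_1\leq 2$, persists for larger $b_1$; controlling the OPEs involving the higher Arakawa--Molev generators is the technical heart of the matter. If that identification cannot be obtained, the fallback is to establish $[\Phi_s(\text{generator}),\,W^{(r)}_{i,j}t^a]=0$ directly from the explicit formulas of Theorem~\ref{Main} and the OPEs among the $W^{(r)}_{i,j}$, an argument of the same flavour and length as the proof of Theorem~\ref{Main} itself.
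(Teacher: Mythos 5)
Your proposal follows essentially the same route as the paper: reduce to the generating modes $W^{(r)}_{i,j}t^a$ of $\widetilde{\mathcal{W}}^k(\mathfrak{gl}(b_1(n_1-n_2)),(b_1^{n_1-n_2}))$, invoke the mutual commutativity of the images of the $\Phi_s$ from Theorem~\ref{Com}, and obtain these modes inside the closure of the image of $\Phi_1$ by transferring the density statement of Theorem~\ref{rect} through the OPE comparison with the rectangular $W$-algebra --- the paper compresses all of this into the single phrase ``Similarly to Theorem~\ref{rrr}.'' The caveats you flag (the tacit non-critical-level hypothesis and the fact that the OPE comparison and generation statement for $b_1>2$ are only asserted, with the full identification deferred to the paper's Conjecture) are genuine, but they are gaps in the paper's own exposition rather than defects introduced by your argument.
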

The OPEs of the elements \eqref{OPE} in the rectangular $W$-algebra $\mathcal{W}^{k+\sum_{s=b_1+1}^{l}q_s}(\mathfrak{gl}(q_1b_1),(b_1^{q_1}))$ is the same as those in $\mathcal{W}^{k+n_2(b_1-1)+\sum_{s=b_1+1}^{l}q_s}(\mathfrak{gl}((n_1-n_2)b_1),(b_1^{n_1-n_2}))$. 
\begin{Conjecture}
The subalgebra $\widetilde{\mathcal{W}}^k(\mathfrak{gl}(b_1(n_1-n_2)),(b_1^{n_1-n_2}))$ is isomorphic to the rectangular $W$-algebra $\mathcal{W}^{k+n_2(b_1-1)+\sum_{s=b_1+1}^{l}q_s}(\mathfrak{gl}((n_1-n_2)b_1),(b_1^{n_1-n_2}))$.
\end{Conjecture}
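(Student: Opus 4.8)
The plan is to promote the coincidence of OPEs recorded just above into a genuine isomorphism of vertex algebras, following the pattern of Theorem~\ref{emb} and Lemma~\ref{lem11} but now for arbitrary $b_1$. Throughout, put $k''=k+n_2(b_1-1)+\sum_{s=b_1+1}^{l}q_s$ and write $R=\mathcal{W}^{k''}(\mathfrak{gl}((n_1-n_2)b_1),(b_1^{n_1-n_2}))$ for the rectangular $W$-algebra occurring in the statement.

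The first step is the OPE comparison in full generality. Using the explicit expressions for $W^{(1)}_{i,j}$ and $W^{(2)}_{i,j}$ from Theorem~\ref{Generators}, restricted to the range $1\leq i,j\leq n_1-n_2$ (where, as in \eqref{W1} and \eqref{W2}, only the first $b_1$ Miura components survive), I would compute the OPEs of $\{W^{(1)}_{i,j},W^{(2)}_{i,j}\mid 1\leq i,j\leq n_1-n_2\}$ inside $\mathcal{W}^k(\mathfrak{gl}(N),f)$ and match them --- after the $\partial$-shift relating $W^{(2)}_{i,j}$ and $\widetilde{W}^{(2)}_{i,j}$ as in Theorem~\ref{emb} --- with the OPEs of the level-$1$ and level-$2$ strong generators of $R$ computed by Rapcak and in \cite{U8}. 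This is precisely the assertion recorded just above; granting it, a standard reconstruction argument for vertex algebras yields a vertex algebra homomorphism $\pi\colon R\to\widetilde{\mathcal{W}}^k(\mathfrak{gl}(b_1(n_1-n_2)),(b_1^{n_1-n_2}))$ carrying the level-$1$ and level-$2$ generators of $R$ to $W^{(1)}_{i,j}$ and to the corresponding $\partial$-shift of $W^{(2)}_{i,j}$.

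Surjectivity of $\pi$ is immediate from Theorem~\ref{rect}(1): when $k''+(b_1-1)(n_1-n_2)\neq0$ the algebra $R$ is generated by its level-$1$ and level-$2$ fields, and their $\pi$-images generate $\widetilde{\mathcal{W}}^k(\mathfrak{gl}(b_1(n_1-n_2)),(b_1^{n_1-n_2}))$ by definition. For injectivity I would use the Miura map. The graded character of $R$ is that of a free vertex algebra on generators $\widetilde{W}^{(r)}_{i,j}$, $1\leq r\leq b_1$, $1\leq i,j\leq n_1-n_2$ (Arakawa--Molev; see Theorem~\ref{gen}), and is independent of $k''$. On the other hand, by \eqref{5261}, \eqref{W1}, \eqref{W2} and their higher-level analogues the Miura images $\mu(\pi(\widetilde{W}^{(r)}_{i,j}))$ already differ in their leading linear terms through a Vandermonde-type dependence on the block index, so they freely generate a polynomial vertex subalgebra of $\bigotimes_{1\leq s\leq b_1}V^{\kappa_s}(\mathfrak{gl}(q_1))$ whenever $\alpha_1=k+N-q_1\neq0$; hence the character of $\widetilde{\mathcal{W}}^k(\mathfrak{gl}(b_1(n_1-n_2)),(b_1^{n_1-n_2}))$ is at least this free character. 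Combining the two bounds, $\pi$ is an isomorphism in every graded component for all $k$ outside the finite set where $\alpha_1=0$ or $k''+(b_1-1)(n_1-n_2)=0$, and these remaining levels can be treated separately (or simply excluded, as in Theorem~\ref{rect}).

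The main obstacle is the very first step once $b_1\geq3$: the OPEs of rectangular $W$-algebras $\mathcal{W}^{\bullet}(\mathfrak{gl}(b_1 n),(b_1^{n}))$ are available in closed form only for $b_1\leq2$ (Rapcak, \cite{U8}), so for $b_1\geq3$ one must either compute them or bypass the OPE comparison altogether --- for instance by realizing both $R$ and $\widetilde{\mathcal{W}}^k(\mathfrak{gl}(b_1(n_1-n_2)),(b_1^{n_1-n_2}))$ as explicit vertex subalgebras of $\bigotimes_{1\leq s\leq b_1}V^{\kappa_s}(\mathfrak{gl}(q_1))$ and showing they coincide; this is presumably why the statement is left as a conjecture. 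A secondary, more routine, point is the handling of the finitely many exceptional levels, where one must verify that the character of $\widetilde{\mathcal{W}}^k(\mathfrak{gl}(b_1(n_1-n_2)),(b_1^{n_1-n_2}))$ does not jump.
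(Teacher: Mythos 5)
This statement is left as a \emph{conjecture} in the paper --- there is no proof to compare against, and the paper itself only records the ingredient that the OPEs of the fields in \eqref{OPE} agree with those of the corresponding fields of the rectangular $W$-algebra, then explicitly defers the isomorphism. Your proposal does not close this gap; it restates the intended strategy and, to your credit, correctly identifies where it breaks down. But it is worth being precise about \emph{why} the first step is not merely a computation that nobody has done yet. Knowing that the generator--generator OPEs of $\{W^{(1)}_{i,j},W^{(2)}_{i,j}\}$ match those of the rectangular $W$-algebra $R$ does not by itself yield a homomorphism $\pi\colon R\to\widetilde{\mathcal{W}}^k(\mathfrak{gl}(b_1(n_1-n_2)),(b_1^{n_1-n_2}))$ via a ``standard reconstruction argument'': for $b_1\geq 3$ the algebra $R$ is \emph{strongly} freely generated by fields $\widetilde{W}^{(r)}_{i,j}$ with $r$ up to $b_1$, and the higher fields $\widetilde{W}^{(3)},\dots$ arise as OPE coefficients of the lower ones (e.g.\ inside $\widetilde{W}^{(2)}{}_{(s)}\widetilde{W}^{(2)}$). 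To extend a map defined on $r=1,2$ to all of $R$ one must identify the images of these higher fields in $\widetilde{\mathcal{W}}^k$ and verify \emph{their} OPEs as well, which is an inductive problem requiring closed-form OPEs at every level --- exactly what is unavailable for $b_1\geq3$. This is the reason the $b_1=2$ case (Theorem~\ref{emb}, Lemma~\ref{lem11}) goes through: there the OPEs close on the degree-$1$ and degree-$2$ fields and their normally ordered products.

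The later steps also lean on unproved inputs. Surjectivity via Theorem~\ref{rect}(1) presupposes that the generation-by-low-degree-fields statement, proved there for rectangular $W$-algebras, transfers to $\widetilde{\mathcal{W}}^k$, which is circular before the isomorphism is established. The injectivity argument asserts that the Miura images $\mu(W^{(r)}_{i,j})$ freely generate a polynomial vertex subalgebra ``by a Vandermonde-type dependence''; this free-generation claim for the subalgebra $\widetilde{\mathcal{W}}^k$ is itself essentially equivalent to the character identity you are trying to prove, and no argument is given for it beyond the leading linear terms. In short: the proposal is a reasonable roadmap, but every one of its three stages (reconstruction, surjectivity, injectivity) rests on the unresolved structure of the higher OPEs, so no part of the conjecture is actually established.
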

If we can prove this conjecture, we can obtain the homomrphism
\begin{equation*}
\widetilde{\Phi}_s\colon Y_{\hbar,\ve}(\widehat{\mathfrak{sl}}(n_s-n_{s+1}))\to\mathcal{U}(C(\mathcal{W}^k(\mathfrak{gl}(N),f),\mathcal{W}^{k+(b_1-1)n_2+\sum_{s=3}^{l}q_s}(\mathfrak{sl}(b_1(n_1-n_2)),(b_1^{n_1-n_2})))
\end{equation*}
by the same way as Theorem~6.5 in \cite{U10}.
\bibliographystyle{plain}
\bibliography{syuu}
\end{document}